\DeclareMathAlphabet{\mathpzc}{OT1}{pzc}{m}{it}
\theoremstyle{plain}
\newcommand{\refnewtheoremn}[4]{
\newaliascnt{#1}{#2}
\newtheorem{#1}[#1]{#3}
\aliascntresetthe{#1}
\expandafter\providecommand\csname #1autorefname\endcsname{#4}}
\newcommand{\refnewtheorem}[3]{\refnewtheoremn{#1}{#2}{#3}{#3}}
\newtheorem{theorem}{Theorem}[section]
\theoremstyle{definition}
\DeclareMathOperator{\Crit}{\operatorname{Crit}}
\DeclareMathOperator{\Res}{\operatorname{Res}}
\DeclareMathOperator{\ind}{\operatorname{ind}}
\DeclareMathOperator{\Hom}{\operatorname{Hom}}
\DeclareMathOperator{\PGL}{\operatorname{PGL}}
\DeclareMathOperator{\Sch}{\operatorname{Sch}}
\DeclareMathOperator{\Spec}{\operatorname{Spec}}
\DeclareMathOperator{\Sym}{\operatorname{Sym}}
\DeclareMathOperator{\Aut}{\operatorname{Aut}}
\renewcommand{\Im}{\operatorname{Im}}
\begin{document}

\title[On the wall-crossing formula for quadratic differentials]{On the wall-crossing formula for \protect\\quadratic differentials}
\author{Dylan G.L. Allegretti}

\date{}

\maketitle

\begin{abstract}
We prove an analytic version of the Kontsevich-Soibelman wall-crossing formula describing how the number of finite-length trajectories of a quadratic differential jumps as the differential is varied. We characterize certain birational automorphisms of an algebraic torus appearing in this wall-crossing formula using Fock-Goncharov coordinates. As an application, we compute the Stokes automorphisms for the Voros~symbols appearing in the exact WKB~analysis of Schr\"odinger's~equation.
\end{abstract}

\section{Introduction}

The concept of a quadratic differential is fundamental in several areas of low-dimensional geometry and dynamics. Given a Riemann surface~$S$, a holomorphic quadratic differential $\phi$ on~$S$ is defined as a holomorphic section of $\omega_S^{\otimes2}$ where $\omega_S$ is the holomorphic cotangent bundle of~$S$. Such a differential~$\phi$ determines a flat metric on~$S$ with singularities at the zeros of~$\phi$. Of particular importance in the theory are certain geodesics on~$S$ with respect to the flat metric. These geodesics are the so called finite-length trajectories. Over the past four decades, the problem of counting finite-length trajectories on a surface equipped with a quadratic differential has led to many remarkable results \cite{Masur1, Masur2, Veech, EskinMasur, EskinMasurZorich, EMM}.

In a seemingly unrelated development, Kontsevich and Soibelman described an approach to Donaldson-Thomas theory that can be used to associate numerical invariants to moduli spaces of objects in a 3-Calabi-Yau triangulated category~\cite{KontsevichSoibelman1,KontsevichSoibelman2,KontsevichSoibelman3}. These invariants are known as BPS~invariants since they provide a mathematical approach to counting BPS~states in physics. The~BPS~invariants depend on some extra data, namely a choice of Bridgeland stability condition~\cite{Bridgeland07}. The set of all stability conditions has the structure of a complex manifold, and the BPS~invariants remain constant as one varies the choice of stability condition within this manifold, except at certain real codimension one ``walls'' where they can change discontinuously. The celebrated Kontsevich-Soibelman wall-crossing formula describes precisely how the BPS~invariants change as one crosses a wall in the space of stability conditions~\cite{KontsevichSoibelman1}.

As part of their work on the physics of four-dimensional supersymmetric quantum field theories, Gaiotto, Moore, and Neitzke proposed a relationship between Donaldson-Thomas theory and the theory of quadratic differentials~\cite{GMN}. Their proposal, which was later proved mathematically by Bridgeland and Smith~\cite{BridgelandSmith}, implies that in some cases the BPS invariants defined by Kontsevich and Soibelman count finite-length trajectories of meromorphic quadratic differentials. In particular, the wall-crossing formula encodes how these integers counting finite-length trajectories change as one varies the quadratic differential.

The purpose of the present paper is to study the Kontsevich-Soibelman wall-crossing formula in the context of quadratic differentials. Although the wall-crossing formula is usually formulated as an identity between infinite products of automorphisms of a formal power series ring, we will show using Fock-Goncharov coordinates~\cite{FockGoncharov1} that in our context these products can be viewed as birational transformations of a complex algebraic torus. As an application of our main result, we compute the Stokes automorphisms for the Voros symbols appearing in the exact WKB~analysis of Schr\"odinger's equation on a compact Riemann surface~\cite{Allegretti19a,IwakiNakanishi}.

\subsection{Basic setup}

In order to apply results from Donaldson-Thomas theory, we will restrict attention in this paper to a class of quadratic differentials studied by Gaiotto, Moore, and~Neitzke~\cite{GMN} and known as GMN~differentials. Given a compact Riemann surface~$S$, we will define a GMN~differential on~$S$ to be a meromorphic section of~$\omega_S^{\otimes2}$ satisfying some genericity assumptions which determine the possible orders of its critical points.

Let $\phi$ be a GMN differential on a compact Riemann surface~$S$. Near any point of~$S$ which is not a zero or pole of $\phi$, there is a local coordinate~$w$, well defined up to transformations $w\mapsto\pm w+\text{constant}$, such that $\phi=dw\otimes dw$. By pulling back the Euclidean metric on~$\mathbb{C}$ using these distinguished local coordinates, we get a flat metric on the complement of the zeros and poles. We will be interested in certain geodesics with respect to this flat metric. These geodesics are called finite-length trajectories and come in two types. The first is a saddle connection, which is roughly a geodesic connecting two zeros of~$\phi$. A saddle connection is closed if the zeros at its endpoints coincide. The second type of finite-length trajectory is a closed geodesic, which forms a loop in the complement of all zeros and poles. A closed geodesic is always contained in an annulus foliated by homotopic closed geodesics on~$S$. We refer to such an annulus as a cylinder for~$\phi$. A cylinder is said to be degenerate if one of its boundaries is a pole of order two of~$\phi$.

The differential $\phi$ determines a canonical branched double cover $\pi:\Sigma_\phi\rightarrow S$ on which the square root $\sqrt{\phi}$ is a well defined 1-form. Let $\Sigma_\phi^\circ$ denote the complement in~$\Sigma_\phi$ of the preimages of all poles of~$\phi$ of order~$>1$, and let $\Gamma_\phi$ be the set of $\gamma\in H_1(\Sigma_\phi^\circ,\mathbb{Z})$ such that $\tau(\gamma)=-\gamma$ where $\tau$ is the involution exchanging the two sheets of the double cover~$\Sigma_\phi$. Then $\Gamma_\phi$ is a lattice of finite rank, and we have a group homomorphism 
\[
Z_\phi:\Gamma_\phi\rightarrow\mathbb{C}, \quad Z_\phi(\gamma)=\int_\gamma\sqrt{\phi},
\]
called the period map.

We will see that any finite length trajectory $\alpha$ has a natural lift $\widehat{\alpha}\in\Gamma_\phi$. Moreover, any two closed geodesics in the same cylinder have the same lift. Thus there is a class in~$\Gamma_\phi$ associated to any saddle connection or cylinder for~$\phi$. We will define a notion of genericity for GMN~differentials, and if $\phi$ is a generic GMN differential and $\gamma\in\Gamma_\phi$ is any class, we will consider the associated integer 
\begin{align*}
\Omega_\phi(\gamma) &= |\{\text{non-closed saddle connections of class $\pm\gamma$}\}| \\
&\quad - 2\cdot|\{\text{nondegenerate cylinders of class $\pm\gamma$}\}|
\end{align*}
which ``counts'' the finite-length trajectories of~$\phi$. By the work of Bridgeland and Smith~\cite{BridgelandSmith}, this integer coincides with the BPS invariant defined in a more general context by Kontsevich and Soibelman~\cite{KontsevichSoibelman1}.

\subsection{The wall-crossing formula}

If $\phi$ is any GMN differential, then an active ray is defined to be a ray in~$\mathbb{C}^*$ of the form $\ell=\mathbb{R}_{>0}\cdot Z_\phi(\gamma)$ where $\gamma$ is the class of some finite-length trajectory. We will be interested in certain birational maps associated to the active rays in the case when the differential $\phi$ is generic.

To define these maps, we consider an object called the twisted torus, defined as the set 
\[
\mathbb{T}_-=\{g:\Gamma_\phi\rightarrow\mathbb{C}^*:g(\gamma_1+\gamma_2)=(-1)^{\langle\gamma_1,\gamma_2\rangle}g(\gamma_1)g(\gamma_2)\}
\]
where $\langle-,-\rangle$ is the intersection pairing on~$\Gamma_\phi$. We will see that the twisted torus $\mathbb{T}_-$ has the natural structure of an algebraic variety whose coordinate ring is spanned as a vector space by the functions $x_\gamma:\mathbb{T}_-\rightarrow\mathbb{C}^*$ given by $x_\gamma(g)=g(\gamma)$. If $\phi$ is a generic GMN differential and $\ell\subset\mathbb{C}^*$ is any ray emanating from the origin, then there is a birational automorphism $\mathbf{S}_\phi(\ell)$ of~$\mathbb{T}_-$ given on functions by 
\[
\mathbf{S}_\phi(\ell)^*(x_\beta)=x_\beta\cdot\prod_{Z_\phi(\gamma)\in\ell}(1-x_\gamma)^{\Omega_\phi(\gamma)\cdot\langle\beta,\gamma\rangle}.
\]
Note that this automorphism is the identity if~$\ell$ is non-active.

We will extend this construction and describe, for any convex sector $\Delta\subset\mathbb{C}^*$, a partially defined automorphism $\mathbf{S}_\phi(\Delta)$ of~$\mathbb{T}_-$. To do this, we define a notion of height for any ray in~$\mathbb{C}^*$, and we consider the composition 
\[
\mathbf{S}_{\phi,<H}(\Delta)=\mathbf{S}_\phi(\ell_1)\circ\mathbf{S}_\phi(\ell_2)\circ\dots\circ\mathbf{S}_\phi(\ell_k)
\]
of the maps defined above where $\ell_1,\ell_2,\dots,\ell_k\subset\Delta$ are the rays of height $<H$ taken in the clockwise order. A result of Bridgeland~\cite{Bridgeland19} says there is a nonempty analytic open subset of~$\mathbb{T}_-$ on which the pointwise limit 
\[
\mathbf{S}_\phi(\Delta)=\lim_{H\rightarrow\infty}\mathbf{S}_{\phi,<H}(\Delta)
\]
exists and is holomorphic. This limiting function $\mathbf{S}_\phi(\Delta)$ is called the BPS automorphism associated to the sector~$\Delta\subset\mathbb{C}^*$. Similar analytic maps have been studied by Kontsevich and Soibelman~\cite{KontsevichSoibelman4}, who described a general framework for the analytic study of wall-crossing formulas.

In our context, the wall-crossing formula is an identity that relates the BPS automorphisms~$\mathbf{S}_\phi(\Delta)$ for different quadratic differentials~$\phi$. To state this result, we need a suitable moduli space of GMN~differentials. Following the approach of~\cite{BridgelandSmith}, our moduli space will be labeled by the combinatorial data of a marked bordered surface. This is defined as a compact oriented surface~$\mathbb{S}$ together with a finite set~$\mathbb{M}$ of marked points on~$\mathbb{S}$ such that every boundary component of~$\mathbb{S}$ contains at least one marked point. As we review below, any GMN~differential determines an associated marked bordered surface. For a given~$(\mathbb{S},\mathbb{M})$, there is a moduli space $\mathscr{Q}^\pm(\mathbb{S},\mathbb{M})$ parametrizing GMN differentials whose associated marked bordered surface is~$(\mathbb{S},\mathbb{M})$.

The lattices $\Gamma_\phi$ define a local system over this moduli space $\mathscr{Q}^\pm(\mathbb{S},\mathbb{M})$. If $\phi_t$, $t\in[0,1]$, is a one-parameter family of GMN~differentials in $\mathscr{Q}^\pm(\mathbb{S},\mathbb{M})$ then we can use the flat connection of this local system to identify the $\Gamma_{\phi_t}$ to a single lattice. In particular, we can view the twisted tori associated with the differentials $\phi_t$ as a single algebraic variety. We then have the following analytic version of the Kontsevich-Soibelman wall-crossing formula.

\begin{theorem}
\label{thm:introWCF}
Let $(\mathbb{S},\mathbb{M})$ be a marked bordered surface which is not a closed surface with exactly one marked point, and let $\Delta\subset\mathbb{C}^*$ be a convex sector. Suppose $\phi_t$, $t\in[0,1]$, is a path in~$\mathscr{Q}^\pm(\mathbb{S},\mathbb{M})$ with general endpoints such that the boundary rays of~$\Delta$ are non-active for each differential~$\phi_t$. Then 
\[
\mathbf{S}_{\phi_0}(\Delta)=\mathbf{S}_{\phi_1}(\Delta).
\]
\end{theorem}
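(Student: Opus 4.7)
The plan is to prove the equality by reducing to a local wall-crossing analysis along the path. Since $[0,1]$ is compact, $\phi_0$ and $\phi_1$ are generic, and along a generic path one may assume that the set of $t\in[0,1]$ for which $\phi_t$ fails to be generic is discrete, it suffices to show that $t\mapsto \mathbf{S}_{\phi_t}(\Delta)$ is locally constant away from these isolated special times and is continuous across them. The first step is thus to stratify $\mathscr{Q}^\pm(\mathbb{S},\mathbb{M})$ into an open generic stratum and real codimension-one walls, and to analyze each wall intersected by the path.

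For the path one must distinguish two kinds of walls. First, walls on which the combinatorial type of the set of finite-length trajectories of $\phi_t$ changes without any two active rays in $\Delta$ becoming collinear. The hypothesis that the boundary rays of $\Delta$ are non-active along the entire path rules out active rays entering or leaving $\Delta$ through its boundary, so across such a wall the ray factors of $\mathbf{S}_{\phi_t}(\Delta)$ that change can be isolated and tracked individually. Direct continuity of the explicit formula $x_\beta\cdot\prod(1-x_\gamma)^{\Omega_\phi(\gamma)\langle\beta,\gamma\rangle}$ then shows $\mathbf{S}_{\phi_t}(\Delta)$ does not jump, combined with the fact, visible from Bridgeland's analytic convergence theorem cited in the excerpt, that the infinite product defining $\mathbf{S}_\phi(\Delta)$ is stable under such local perturbations. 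The substantive case is the second kind of wall, namely a wall of marginal stability on which two distinct active rays inside $\Delta$ collide and the BPS invariants $\Omega_\phi(\gamma)$ themselves jump discontinuously.

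To handle walls of marginal stability, I would pass through the Bridgeland--Smith identification of $\mathscr{Q}^\pm(\mathbb{S},\mathbb{M})$ with a space of Bridgeland stability conditions on a 3-Calabi--Yau triangulated category, under which $\Omega_\phi(\gamma)$ corresponds to a Donaldson--Thomas invariant and the intersection pairing on $\Gamma_\phi$ corresponds to the Euler form. The formal Kontsevich--Soibelman wall-crossing formula, interpreted as an identity in a pro-nilpotent completion of the group algebra of $\Gamma_\phi$, then asserts exactly that the ordered product of the ray automorphisms across the marginal stability wall is unchanged. Truncating to rays of height $<H$ gives a genuine birational identity $\mathbf{S}_{\phi_-,<H}(\Delta)=\mathbf{S}_{\phi_+,<H}(\Delta)$ for $\phi_\pm$ on the two sides of the wall (modulo terms of height $\ge H$), and invoking the analytic convergence theorem of Bridgeland to send $H\to\infty$ on a nonempty common open subset of $\mathbb{T}_-$ promotes the formal identity to an honest equality of holomorphic maps, hence of birational automorphisms.

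The step I expect to be the main obstacle is the last one: ensuring that Bridgeland's open subset of convergence for $\mathbf{S}_{\phi_\pm}(\Delta)$ can be chosen to contain a common nonempty open set as $\phi$ approaches the wall, and that the formal Kontsevich--Soibelman identity, which lives in a pro-nilpotent completion, is compatible with the analytic limit in which infinitely many new active rays with potentially large BPS invariants can be generated. Secondary difficulties are verifying that the Bridgeland--Smith dictionary is compatible with all of the structures in play (the twisted torus $\mathbb{T}_-$, the sign twist by $(-1)^{\langle\gamma_1,\gamma_2\rangle}$, the period map, and the labeling of BPS invariants), which is the reason one must exclude the closed surface with exactly one marked point from the statement.
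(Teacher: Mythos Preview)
Your plan takes a different route from the paper's and has a gap at a step you did not flag.

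The paper does not prove Theorem~\ref{thm:introWCF} by following $\mathbf{S}_{\phi_t}(\Delta)$ across marginal-stability walls along the path. It instead deduces it from the stronger Theorem~\ref{thm:main}, which identifies $\mathbf{S}_\phi(\Delta)$ with the Fock--Goncharov transition map $X_{\tau_2}\circ X_{\tau_1}^{-1}$ between the tagged WKB triangulations $\tau_1,\tau_2$ attached to the two boundary phases of~$\Delta$. Once this is known, Theorem~\ref{thm:introWCF} is immediate: along a path with non-active boundary rays these triangulations are constant, hence so is the transition map. The substance is therefore all in Theorem~\ref{thm:main}, whose proof uses a stratification $F_p$ indexed by the number of \emph{horizontal} saddle connections (that is, by how far the differential is from being saddle-free), not by marginal stability inside~$\Delta$. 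The induction on~$p$ rests on Bridgeland--Smith's ``walls have ends'' property (Proposition~\ref{prop:wallshaveends}), which lets one route around high strata, and it invokes the wall-crossing formula only in the restricted form of Theorem~\ref{thm:firstWCF}, valid for paths confined to a single chamber $\mathcal{C}_\tau$ with $\Delta$ in the upper half-plane. That restricted form is what the appendix derives from the motivic wall-crossing formula for the category of nilpotent modules over the Jacobian algebra of a \emph{fixed} quiver with potential $(Q(\tau),W(\tau))$.

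The gap in your plan is the appeal to a global Kontsevich--Soibelman identity via the Bridgeland--Smith 3-Calabi--Yau category. The wall-crossing input the paper can actually cite (Davison--Meinhardt, Meinhardt) is proved for a fixed quiver with potential, which on the quadratic-differential side means it applies only within a single cell $\mathcal{C}_\tau$ and only for sectors in the upper half-plane. A general path in $\mathscr{Q}^\pm(\mathbb{S},\mathbb{M})$ leaves that cell whenever $\phi_t$ acquires a horizontal saddle connection, at which point the underlying quiver mutates and the cited identity no longer covers the situation. The paper's substitute for a global KS identity is precisely the Fock--Goncharov characterization together with the ``walls have ends'' induction; these are the ideas your plan is missing, and they are also what buys the bonus conclusion that $\mathbf{S}_\phi(\Delta)$ extends to a birational automorphism. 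The analytic compatibility of the formal product with the holomorphic limit, which you flag as the main obstacle, is real but secondary; once a formal equality is available, Propositions~\ref{prop:BPSautomorphism} and~\ref{prop:KSautomorphism} are what upgrade it.
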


Note that the BPS invariants for the differentials~$\phi_0$ and $\phi_1$ will be completely different in general. In this way, the wall-crossing formula encodes how the finite length trajectories of a quadratic differential change as we vary the differential within the moduli space $\mathscr{Q}^\pm(\mathbb{S},\mathbb{M})$.

\subsection{Fock-Goncharov coordinates}

If $\phi$ is a general point in $\mathscr{Q}^\pm(\mathbb{S},\mathbb{M})$ and $\Delta\subset\mathbb{C}^*$ is a convex sector whose boundary rays are non-active, then Theorem~\ref{thm:introWCF} says that the BPS~automorphism $\mathbf{S}_\phi(\Delta)$ is invariant under small deformations of the differential. Our main result will allow us to compute this invariant explicitly using Fock-Goncharov coordinates, thereby making rigorous sense of a proposal of Gaiotto Moore, and Neitzke~\cite{GMN}. As a consequence, we will also see that $\mathbf{S}_\phi(\Delta)$ is a birational automorphism of~$\mathbb{T}_-$. This is closely related to the work~\cite{GoncharovShen}.

To explain this result in more detail, we consider triangulations of a marked bordered surface. Precisely, we define an ideal triangulation of~$(\mathbb{S},\mathbb{M})$ to be a triangulation of~$\mathbb{S}$ whose vertices are exactly the points of~$\mathbb{M}$. For technical reasons, we will also consider a variant of this concept called a tagged triangulation~\cite{FST}. Assuming $(\mathbb{S},\mathbb{M})$ is not a closed surface with exactly one marked point, any two tagged triangulations of~$(\mathbb{S},\mathbb{M})$ are related by a sequence of elementary operations called flips. Roughly speaking, a flip is an operation that removes an arc of the triangulation and replaces it by the unique different arc that results in a new triangulation.

In their seminal paper on higher Teichm\"uller theory~\cite{FockGoncharov1}, Fock and Goncharov introduced a moduli space $\mathscr{X}(\mathbb{S},\mathbb{M})$ parametrizing flat $\PGL_2(\mathbb{C})$-connections on the punctured surface $\mathbb{S}\setminus\mathbb{M}$ with additional framing data. If $\tau$ is any tagged triangulation of~$(\mathbb{S},\mathbb{M})$, let us write $\Gamma_\tau\cong\mathbb{Z}^n$ for the lattice spanned by the arcs of~$\tau$. Then there is a birational map 
\[
X_\tau:\mathscr{X}(\mathbb{S},\mathbb{M})\dashrightarrow\mathbb{T}_\tau\coloneqq\Hom_{\mathbb{Z}}(\Gamma_\tau,\mathbb{C}^*)
\]
from the moduli space to a complex algebraic torus~$\mathbb{T}_\tau\cong(\mathbb{C}^*)^n$. The components of this map corresponding to the arcs of~$\tau$ are known as Fock-Goncharov coordinates.

Let $(\mathbb{S},\mathbb{M})$ be a marked bordered surface which is not a closed surface with exactly one marked point. If $\phi$ is a general point in the space $\mathscr{Q}^\pm(\mathbb{S},\mathbb{M})$ and $\Delta\subset\mathbb{C}^*$ is a convex sector whose boundary rays are non-active with phases~$\theta_1$ and~$\theta_2$, then we will see that each rotated differential $e^{-2i\theta_j}\cdot\phi$ determines a tagged triangulation~$\tau_j$ of~$(\mathbb{S},\mathbb{M})$. In this case the tori $\mathbb{T}_{\tau_j}$ are naturally identified with $\mathbb{T}_+=\Hom_{\mathbb{Z}}(\Gamma_\phi,\mathbb{C}^*)$, and hence we can think of the Fock-Goncharov coordinates as providing birational maps 
\begin{equation}
\label{eqn:introsametarget}
X_{\tau_j}:\mathscr{X}(\mathbb{S},\mathbb{M})\dashrightarrow\mathbb{T}_+.
\end{equation}
As we explain below, the twisted torus $\mathbb{T}_-$ is a torsor for~$\mathbb{T}_+$, and hence these objects can be identified after choosing a basepoint in~$\mathbb{T}_-$.

\begin{theorem}
\label{thm:intromain}
Take notation as in the last paragraph. Then 
\begin{enumerate}
\item There is a distinguished basepoint $\xi\in\mathbb{T}_-$ such that $\xi(\gamma)=-1$ if $\gamma\in\Gamma_\phi$ is the class of a non-closed saddle connection and $\xi(\gamma)=+1$ if $\gamma$ is the class of a closed saddle connection.
\item $\mathbf{S}_\phi(\Delta)$ extends to a birational automorphism of~$\mathbb{T}_-$. If we use the basepoint $\xi$ to identify~$\mathbb{T}_-$ with~$\mathbb{T}_+$, then this is the birational automorphism of~$\mathbb{T}_+$ relating the maps~\eqref{eqn:introsametarget}.
\end{enumerate}
\end{theorem}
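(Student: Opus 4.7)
My plan is to construct the basepoint $\xi$ explicitly from a WKB triangulation of $\phi$, and to identify the BPS automorphism $\mathbf{S}_\phi(\Delta)$ with the composition of cluster mutations corresponding to the finite sequence of flips connecting $\tau_1$ to $\tau_2$.

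For part~(1), I would fix the clockwise boundary phase $\theta_1$ of $\Delta$ and work with the tagged triangulation $\tau_1$ associated to $e^{-2i\theta_1}\phi$. The arcs of $\tau_1$ lift to saddle connections whose classes $\{\gamma_i\}$ form a $\mathbb{Z}$-basis of $\Gamma_\phi$. I would define $\xi\in\mathbb{T}_-$ by setting $\xi(\gamma_i)=-1$ on the basis and extending via the twisting rule $\xi(\gamma_1+\gamma_2)=(-1)^{\langle\gamma_1,\gamma_2\rangle}\xi(\gamma_1)\xi(\gamma_2)$, which is consistent because $\Gamma_\phi$ is free. The substantive step is verifying that $\xi$ takes the claimed values on every saddle connection class, not merely on the basis. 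Writing the class of a saddle connection $\alpha$ as $\widehat\alpha=\sum c_i\gamma_i$, one has $\xi(\widehat\alpha)=(-1)^{N(\widehat\alpha)}$ where $N$ is a $\mathbb{Z}/2$-valued quadratic form in the $c_i$ built from the pairings $\langle\gamma_i,\gamma_j\rangle$. I would match this parity with the closed/non-closed dichotomy by a direct computation on the double cover~$\Sigma_\phi$, expressing $\alpha$ as a concatenation of arcs of $\tau_1$ and tracking the combinatorics of their lifts. Independence of the chosen triangulation would then follow by verifying invariance under a single flip, which reduces to a rank-two calculation.

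For part~(2), I would first apply Theorem~\ref{thm:introWCF} to perturb $\phi$ to a generic point at which the family $\{e^{-2i\theta}\phi : \theta\in[\theta_1,\theta_2]\}$ crosses only finitely many walls. At each wall the WKB triangulation undergoes a single flip along an arc whose lifted class $\gamma$ satisfies $\Omega_\phi(\gamma)=1$, or, in the nondegenerate cylinder case, $\Omega_\phi(\gamma)=-2$. The BPS automorphism $\mathbf{S}_\phi(\ell)^*(x_\beta)=x_\beta(1-x_\gamma)^{\Omega_\phi(\gamma)\langle\beta,\gamma\rangle}$ must then be identified with the birational map on $\mathbb{T}_+$ expressing the change of Fock-Goncharov coordinates across the flip. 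The role of the basepoint $\xi$ is precisely to convert the $\mathbb{T}_-$-formula containing $(1-x_\gamma)$ into the classical cluster mutation factor $(1+x_\gamma)$ on $\mathbb{T}_+$, which is possible because $\xi(\gamma)=-1$ for such a $\gamma$ by part~(1). Composing these factors in clockwise order produces a birational automorphism of $\mathbb{T}_+$ that interpolates between $X_{\tau_1}$ and $X_{\tau_2}$, and on a common domain of convergence this finite birational composition must equal the pointwise limit defining $\mathbf{S}_\phi(\Delta)$. Wall-crossing invariance then transports the equality back to the original $\phi$.

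The principal obstacle will be the term-by-term identification of BPS factors with cluster mutations. The saddle-connection case is essentially a Pl\"ucker-type identity controlled by the sign choice $\xi(\gamma)=-1$, but the nondegenerate cylinder case is more delicate: there $\Omega_\phi(\gamma)=-2$ does not correspond to a single flip but to a compound transformation, and I would handle it by reducing to a rank-two sub-problem of the kind familiar from the Kontsevich-Soibelman pentagon and Kronecker identities. Alongside this, the sign calculation in part~(1) requires careful bookkeeping of the intersection form on $\Gamma_\phi$ against the combinatorics of the WKB triangulation, and is most cleanly organized via the double cover~$\Sigma_\phi$.
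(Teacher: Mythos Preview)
For part~(1) your approach differs from the paper's. You construct $\xi$ combinatorially on a basis coming from a WKB triangulation and then must verify its values on arbitrary saddle-connection classes and its invariance under flips; the paper instead builds $\xi$ intrinsically from the spin structure on~$\Sigma_\phi^\circ$ determined by a square root $L$ of~$\omega_{\Sigma_\phi^\circ}$ (which exists because the tautological $1$-form~$\lambda$ has only even-order zeros). The spin structure yields a quadratic form $q_\omega$ on $H_1(\Sigma_\phi^\circ;\mathbb{Z}_2)$, one sets $\xi(\gamma)=(-1)^{q_\omega(\gamma)}$, and the winding-number identity $q_\omega(\gamma)\equiv\ind_\gamma(\lambda)+1\pmod 2$ reduces the saddle-connection check to a short index calculation on explicit lifts. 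This makes triangulation-independence automatic; your route could in principle succeed but carries the combinatorial burden you acknowledge.

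For part~(2) there is a genuine gap. Your plan rests on perturbing $\phi$ so that the family $\{e^{-2i\theta}\phi:\theta\in[\theta_1,\theta_2]\}$ crosses only finitely many walls, but no such perturbation exists in general: the ray diagram of a generic GMN differential typically has infinitely many active rays in a sector (already in the annulus/Kronecker example, classes $m\gamma_1+n\gamma_2$ with $(m,n)$ coprime give rays accumulating onto the ray of~$\gamma_1+\gamma_2$), which is exactly why $\mathbf{S}_\phi(\Delta)$ is defined as a limit rather than a finite product. Correspondingly, a nondegenerate cylinder never appears as an isolated wall to which your $\Omega=-2$ analysis would apply: a horizontal one forces horizontal saddle connections on its boundary, placing the differential in a stratum $F_p$ with $p>2$. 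The paper avoids enumerating rays altogether. It stratifies the complete locus by the sets~$F_p$, handles $F_0$ and~$F_2$ directly (the latter is precisely your single-flip computation, using $\xi(\gamma)=-1$ to turn $(1-x_\gamma)$ into the cluster factor $(1+X_{\gamma_k})$; the closed-saddle case gives $\Omega_\phi(\gamma)=0$ and both sides trivial), and for $p>2$ inducts using the within-cell wall-crossing formula (Theorem~\ref{thm:firstWCF}) together with the ``walls have ends'' property (Proposition~\ref{prop:wallshaveends}) to slide off the higher stratum without changing either side. A compactness argument on $[\theta_1,\theta_2]$ then assembles the general sector from small ones. Note finally that Theorem~\ref{thm:introWCF}, which you invoke as input, is in the paper a \emph{consequence} of the result being proved; the independently available input is only the within-cell version, Theorem~\ref{thm:firstWCF}.
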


We note that Theorem~\ref{thm:main} provides a means of computing the BPS automorphism~$\mathbf{S}_\phi(\Delta)$. Indeed, the tagged triangulations~$\tau_j$ are related by a sequence of flips. For each pair of tagged triangulations related by a flip, the associated Fock-Goncharov coordinates are related by a well known coordinate transformation. By composing these coordinate transformations, one obtains the transformation~$\mathbf{S}_\phi(\Delta)$ explicitly.

\subsection{Application to WKB analysis}

Let us conclude this introduction by describing an application of the above results to exact WKB analysis. This material will not appear elsewhere in this paper and can be skipped. The ideas of this subsection are used in the sequel paper~\cite{Allegretti19b} to solve a class of Riemann-Hilbert problems formulated in~\cite{Bridgeland19}. These ideas originated in a slightly different setting in the work of Gaiotto, Moore, and Neitzke~\cite{GMN}.

The theory of exact WKB analysis is a tool that can be used to construct exact solutions of Schr\"odinger's equation for small values of the Planck constant~$\hbar$. In exact WKB~analysis, the Voros~symbols are certain formal series in~$\hbar$ which appear when explicitly calculating the monodromy group of Schr\"odinger's equation. As explained in~\cite{Allegretti19a}, one can associate a Voros symbol to a general point $\phi\in\mathscr{Q}^\pm(\mathbb{S},\mathbb{M})$ and class~$\gamma\in\Gamma_\phi$. This formal series has the form 
\[
Y_{\phi,\gamma}(\hbar)=e^{-Z_\phi(\gamma)/\hbar}\cdot\sum_{k=0}^\infty\hbar^kf_k
\]
for some coefficients $f_k$.

In general, the Voros symbol diverges and therefore does not define a holomorphic function of~$\hbar$. To get around this difficulty, one must use Borel resummation. In the Borel resummation method, one starts with a (possibly divergent) formal power series $f(\hbar)=\sum_{k=0}^\infty\hbar^kf_k$ which is Borel~summable in the sense defined in~\cite{Allegretti19a,IwakiNakanishi}. One then constructs a function $\mathcal{S}[f]$ called the Borel~sum of~$f$, which is holomorphic in a small sectorial neighborhood of $\hbar=0$ and has $f$ as its asymptotic expansion as $\hbar\rightarrow0$. The Borel sum is given by the expression 
\begin{equation}
\label{eqn:Borelsum}
\mathcal{S}[f](\hbar)=f_0+\int_0^\infty e^{-y/\hbar}f_B(y)dy
\end{equation}
where $f_B(y)$ denotes the Borel transform of~$f$ (see~\cite{IwakiNakanishi} for details). The series appearing in the definition of the Voros symbol is known to be Borel summable~\cite{IwakiNakanishi} provided there is no active ray of phase zero for the differential~$\phi$. In this case, we can think of the Voros symbol as the function given by $\mathcal{Y}_{\phi,\gamma}(\hbar)=e^{-Z_\phi(\gamma)/\hbar}\cdot\mathcal{S}[f](\hbar)$.

If there is an active ray of phase zero, then the Voros symbol may not be Borel summable. This happens because the Borel transform develops singularities along the positive real axis, making the integral in~\eqref{eqn:Borelsum} undefined. In this case, we can choose a small angle $\theta$, and define a modified Borel sum $\mathcal{S}_\theta[f]$ by the same formula~\eqref{eqn:Borelsum} where now the integral is taken along the ray $y=re^{i\theta}$,~$r\geq0$ in~$\mathbb{C}$. The resulting function $\mathcal{S}_\theta[f]$ is again holomorphic in a sectorial neighborhood of~$\hbar=0$ and has~$f$ as its asymptotic expansion as $\hbar\rightarrow0$. However, in this case an interesting Stokes phenomenon can occur: for sufficiently small angles $\theta>0$, the two functions $\mathcal{S}_{\pm\theta}[f]$ have the same asymptotic expansion as $\hbar\rightarrow0$ but are not the same function.

The Stokes phenomenon for Voros symbols was studied by Delabaere, Dillinger, and Pham~\cite{DDP} who considered the situation where there is a unique finite-length trajectory whose class $\gamma$ satisfies $Z_\phi(\gamma)\in\mathbb{R}$, which moreover is a non-closed saddle connection. For sufficiently small $\theta>0$, the functions $\mathcal{Y}_{\phi,\beta}^{(\pm\theta)}(\hbar)=e^{-Z_\phi(\gamma)/\hbar}\cdot\mathcal{S}_{\pm\theta}[f](\hbar)$ are defined, and Delabaere, Dillinger, and Pham showed that 
\[
\mathcal{Y}_{\phi,\beta}^{(-\theta)}=\mathcal{Y}_{\phi,\beta}^{(+\theta)}\cdot\left(1+\mathcal{Y}_{\phi,\gamma}^{(+\theta)}\right)^{\langle\beta,\gamma\rangle}
\]
as analytic functions where $\gamma\in\Gamma_\phi$ is the class of the saddle connection. The transformation relating the functions $\mathcal{Y}_{\phi,\beta}^{(+\theta)}(\hbar)$ to $\mathcal{Y}_{\phi,\beta}^{(-\theta)}(\hbar)$ is known as a Stokes automorphism. Related results concerning Stokes automorphisms for Voros symbols have been obtained in~\cite{AIT,IwakiNakanishi}.

Using Theorem~\ref{thm:intromain}, we can give a completely general description of the Stokes automorphisms for Voros symbols. Indeed, suppose $\phi\in\mathscr{Q}^\pm(\mathbb{S},\mathbb{M})$ has an active ray of phase zero. For any $\varepsilon>0$, we can choose $0<\theta<\varepsilon$ so that the rays $r_\pm=\mathbb{R}_{>0}\cdot e^{\pm2i\theta}$ are non-active. Let $\tau_\pm$ be the corresponding tagged triangulations of~$(\mathbb{S},\mathbb{M})$, and for any $\gamma\in\Gamma_\phi$, let $x_\gamma:\mathbb{T}_+\rightarrow\mathbb{C}^*$ be the character $x_\gamma(g)=g(\gamma)$. Then the main result of~\cite{Allegretti19a} says that there exists a one parameter family of points $\mathcal{L}_\phi(\hbar)\in\mathscr{X}(\mathbb{S},\mathbb{M})$ such that 
\[
\mathcal{Y}_{\phi,\gamma}^{\pm\theta}(\hbar)=x_\gamma\circ X_{\tau_\pm}(\mathcal{L}_\phi(\hbar)).
\]
It follows from Theorem~\ref{thm:intromain} that after identifying $\mathbb{T}_+$ with~$\mathbb{T}_-$ using the canonical basepoint~$\xi$, the Stokes automorphism is precisely the BPS automorphism $\mathbf{S}_\phi(\Delta)$ where $\Delta\subset\mathbb{C}^*$ is the sector having boundary rays~$r_\pm$. In particular, this Stokes automorphism is a product of (perhaps infinitely many) factors $\mathbf{S}_\phi(\ell)$ associated to active rays $\ell\subset\Delta$.

\subsection*{Acknowledgements.}
I thank Tom~Bridgeland for encouraging me to write this paper and for suggesting a strategy for the proof of the main result. While working on this project, I benefited from conversations and correspondence with David~Aulicino, Ben~Davison, Kohei~Iwaki, Sven~Meinhardt, Andrew~Neitzke, and Anton~Zorich. This project was conceived while I was in residence at the Mathematical Sciences Research Institute in Berkeley, California, during the Fall~2019 semester, partially supported by National Science Foundation grant DMS-1440140.

\section{Quadratic differentials on Riemann surfaces}

In this section, we review some basic definitions from the theory of quadratic differentials and describe our geometric setup. None of this material is new, and most of it can be found in~\cite{BridgelandSmith}.

\subsection{GMN differentials}

Throughout this paper, we will write $S$ for a compact Riemann surface of genus~$g\geq0$ and write $\omega_S$ for its holomorphic cotangent bundle. Then a meromorphic \emph{quadratic differential} on~$S$ is defined to be a meromorphic section of $\omega_S^{\otimes2}$. In terms of a local coordinate~$z$ on~$S$, such a section $\phi$ can be written 
\[
\phi(z)=\varphi(z)dz^2
\]
where $\varphi(z)$ is a meromorphic function in the local coordinate.

By a \emph{critical point} of a quadratic differential~$\phi$, we mean either a zero or pole of~$\phi$. We will write $\Crit(\phi)$ for the set of all critical points of~$\phi$. Such a point is called a \emph{finite critical point} if it is a zero or a simple pole and an \emph{infinite critical point} otherwise. Thus the set of all critical points is a disjoint union 
\[
\Crit(\phi)=\Crit_{<\infty}(\phi)\cup\Crit_\infty(\phi)
\]
where $\Crit_{<\infty}(\phi)$ is the set of finite critical points and $\Crit_\infty(\phi)$ is the set of infinite critical points.

In this paper, we will be concerned with quadratic differentials of the following special type.

\begin{definition}
\label{def:GMNdifferential}
A \emph{Gaiotto-Moore-Neitzke (GMN) differential} is a meromorphic quadratic differential $\phi$ on a compact, connected Riemann surface~$S$ satisfying the following conditions:
\begin{enumerate}
\item $\phi$ has no zero of order $>1$.
\item $\phi$ has at least one pole.
\item $\phi$ has at least one finite critical point.
\end{enumerate}
A GMN differential is said to be \emph{complete} if in addition it has no simple poles so that every pole has order~$\geq2$.
\end{definition}

\subsection{The canonical double cover}

Let $\phi$ be a GMN differential on the compact Riemann surface~$S$. Let $p_i\in S$ be the poles of $\phi$, and let $m_i$ be the order of the pole~$p_i$. Then we can think of the differential $\phi$ as a holomorphic section $s_\phi$ of $\omega_S(E)^{\otimes2}$ where $E$ is the divisor 
\[
E=\sum_i\left\lceil\frac{m_i}{2}\right\rceil p_i
\]
and $s_\phi$ has simple zeros at both the zeros and odd order poles of~$\phi$. We can then define the \emph{canonical double cover} as the subspace $\Sigma_\phi$ of the total space of $\omega_S(E)$ cut out by the equation 
\[
\lambda^2=s_\phi(p)
\]
in the fiber over each point $p\in S$. The space $\Sigma_\phi$ defined in this way is a Riemann surface, and the obvious projection $\pi:\Sigma_\phi\rightarrow S$ is a covering map, branched precisely over the set of zeros and odd order poles of~$\phi$. Since $\phi$ has at least one finite critical point, this covering map has at least one branch point and hence $\Sigma_\phi$ is connected.

By construction, the canonical double cover comes with a tautological section $\lambda$ of $\pi^*(\omega_S(E))$ satisfying $\pi^*(s_\phi)=\lambda\otimes\lambda$. It can be viewed alternatively as a meromorphic 1-form on~$\Sigma_\phi$. Note that if $p$ is a simple zero of~$\phi$ then the cover $\pi:\Sigma_\phi\rightarrow S$ is ramified at~$p$, and therefore the preimage $\pi^{-1}(p)$ consists of a single point. Later, we will need to know the following simple fact concerning behavior of $\lambda$ at such a point.

\begin{lemma}
\label{lem:doublezeros}
Suppose $p\in S$ is a simple zero of~$\phi$. Then the tautological 1-form $\lambda$ has a zero of order two at~$\pi^{-1}(p)$.
\end{lemma}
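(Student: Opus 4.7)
The plan is to reduce the statement to a direct local computation in coordinates adapted to the branch point $\pi^{-1}(p)$. Since $p$ is a simple zero of $\phi$ (in particular, not a pole), the divisor $E$ has no contribution at $p$, so near $p$ the section $s_\phi$ of $\omega_S(E)^{\otimes 2}$ coincides with $\phi$ itself in the frame $dz^{\otimes 2}$. Thus if $z$ is a local coordinate on $S$ centered at~$p$ and we write $\phi = \varphi(z)\,dz^{\otimes 2}$, we may factor $\varphi(z) = z\cdot u(z)$ with $u$ holomorphic and $u(0)\neq 0$.

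Next I would describe $\Sigma_\phi$ locally. Using the frame $dz$ to trivialize $\omega_S(E)$ on a neighborhood of~$p$, write points of the total space as pairs $(z,\eta)$, so that $\lambda$ is given by $\eta\,dz$. Then $\Sigma_\phi$ is cut out by $\eta^2 = \varphi(z) = z\, u(z)$. Since $u(0)\neq 0$, I can choose a holomorphic square root $\sqrt{u(z)}$ on a neighborhood of $0$ and introduce a new coordinate $w$ on $\Sigma_\phi$ near $\pi^{-1}(p)$ by $w = \eta/\sqrt{u(z)}$. The defining equation then becomes $w^2 = z$, so $w$ is indeed a uniformizer on $\Sigma_\phi$ at $\pi^{-1}(p)$ and the branched cover $\pi$ is given locally by $z = w^2$.

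Finally I would plug in: in the $w$-coordinate, $\eta = w\sqrt{u(w^2)}$, while $dz = 2w\,dw$. Therefore
\[
\lambda = \eta\,dz = 2w^{2}\,\sqrt{u(w^2)}\,dw,
\]
and the prefactor $\sqrt{u(w^2)}$ is a nowhere-vanishing holomorphic function near $w=0$. Hence $\lambda$ has a zero of order exactly two at $w=0 = \pi^{-1}(p)$, which is the claim.

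There is no real obstacle here: the only place one needs to be careful is in matching local frames so that the identification $s_\phi = \varphi(z)\,dz^{\otimes 2}$ is valid near a zero (as opposed to a pole, where $E$ would intervene), and in observing that both of the two factors of $w$ in the final expression have a geometric source, namely the branching of the cover (giving $dz = 2w\,dw$) and the simple zero of $\varphi$ (giving $\eta \sim w$). Combined, they produce the double zero of the tautological 1-form.
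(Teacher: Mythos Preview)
Your proof is correct and follows essentially the same approach as the paper: a direct local computation in a coordinate $w$ with $z=w^2$ near the branch point, showing that $\pi^*\phi$ (equivalently $\lambda$) vanishes to the claimed order. The only difference is cosmetic: the paper invokes Strebel's normal form to reduce to $\phi=c\,z\,dz^2$ and then reads off the order from $\pi^*\phi=4c\,w^4\,dw^2$, whereas you keep the general unit $u(z)$ and compute $\lambda$ directly from the defining equation of $\Sigma_\phi$; this makes your argument slightly more self-contained but otherwise identical in substance.
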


\begin{proof}
By Theorem~6.1 of~\cite{Strebel}, there exists a local coordinate $z$ defined in a neighborhood of~$p$ and satisfying $z(p)=0$ so that in this local coordinate we have 
\[
\phi(z)=c\cdot zdz^2
\]
for some $c\in\mathbb{C}^*$. Since $\pi:\Sigma_\phi\rightarrow S$ is ramified at~$p$, we can choose a local coordinate $w$ in a neighborhood of $\pi^{-1}(p)$ so that the projection map is given by $\pi^*(z)=w^2$. Then 
\[
(\pi^*\phi)(w)=c\cdot w^2d(w^2)^2=4c\cdot w^4dw^2,
\]
and therefore $\lambda$ has a zero of order $\frac{1}{2}(4)=2$.
\end{proof}

\subsection{The period map}

For convenience, we use the notation 
\[
\Sigma_\phi^\circ=\Sigma_\phi\setminus\pi^{-1}\Crit_\infty(\phi).
\]
There is a natural involution $\tau:\Sigma_\phi\rightarrow\Sigma_\phi$ exchanging the two sheets of the double cover and commuting with the covering map~$\pi$. We will write 
\[
\Gamma_\phi=\left\{\sigma\in H_1(\Sigma_\phi^\circ,\mathbb{Z}):\tau(\sigma)=-\sigma\right\}
\]
for the anti-invariant part of the first homology $H_1(\Sigma_\phi^\circ,\mathbb{Z})$ with respect to this covering involution. It is a free abelian group of finite rank. By integrating the canonical 1-form $\lambda$ around cycles in~$\Gamma_\phi$, we obtain a group homomorphism 
\[
Z_\phi:\Gamma_\phi\rightarrow\mathbb{C}, \quad Z_\phi(\gamma)=\int_\gamma\lambda,
\]
called the \emph{period map} for the differential~$\phi$.

\subsection{Trajectories}

Let $\phi$ be a meromorphic quadratic differential on a compact Riemann surface~$S$. Near any point of $S\setminus\Crit(\phi)$, there is a distinguished local coordinate $w$, unique up to transformations of the form $w\mapsto\pm w+\text{constant}$, with respect to which the quadratic differential $\phi$ is given by 
\[
\phi(w)=dw\otimes dw.
\]
Indeed, if we have $\phi(z)=\varphi(z)dz^2$ for some local coordinate $z$, then $w$ is given by $w=\int\sqrt{\varphi(z)}dz$ for some choice of the square root. These distinguished local coordinates determine two structures on~$S\setminus\Crit(\phi)$. The first is a flat metric defined by pulling back the Euclidean metric on~$\mathbb{C}$ by the distinguished local coordinates. The other structure is the \emph{horizontal foliation}, defined as the foliation by the curves $\Im(w)=\text{constant}$.

By a \emph{straight arc} in~$S$, we mean a smooth path $\alpha:I\rightarrow S\setminus\Crit(\phi)$, defined on an open interval $I\subset\mathbb{R}$, which makes a constant angle $\pi\theta$ with the leaves of the horizontal foliation. By convention, straight arcs will be parametrized by arc length in the flat metric induced by the distinguished local coordinates, and two straight arcs will be regarded as the same if they are related by a reparametrization of the form $t\mapsto\pm t+\text{constant}$. The phase $\theta$ of a straight arc is well defined in~$\mathbb{R}/\mathbb{Z}$, and a straight arc of phase $\theta=0$ is said to be \emph{horizontal}.

A straight arc is called a \emph{trajectory} if it is not the restriction of a straight arc defined on a larger interval. Thus a horizontal trajectory is the same thing as a leaf of the horizontal foliation. A \emph{saddle connection} is a trajectory of any phase whose domain of definition is a finite length interval. A saddle connection is said to be \emph{closed} if its endpoints coincide. Note that if a trajectory intersects itself in $S\setminus\Crit(\phi)$ then it must be periodic and have domain $I=\mathbb{R}$. In this case it is called a \emph{closed trajectory}. By a \emph{finite-length trajectory}, we mean either a saddle connection or a closed trajectory.

Any closed trajectory is contained in an annular region foliated by homotopic closed trajectories. Such a region will be called a \emph{cylinder}. The boundary of a cylinder has two components. Typically, each component is composed of saddle connections, but it can also happen that the cylinder consists of closed trajectories encircling a single pole of order two, which forms one of the boundary components. In the latter case the cylinder is said to be \emph{degenerate}.

\subsection{Homology classes}

Consider a GMN differential~$\phi$ on a compact Riemann surface~$S$. If $\alpha:I\rightarrow S$ is a finite-length trajectory for~$\phi$ which is horizontal, then we can consider the preimage $\widehat{\alpha}=\pi^{-1}(\alpha)$ of this trajectory in the canonical double cover~$\Sigma_\phi$. This preimage is a closed curve which may be disconnected if $\alpha$ is a closed trajectory. As we have seen, there is a canonical 1-form~$\lambda$ on the double cover with the property that $\pi^*(\phi)=\lambda\otimes\lambda$. We can endow the closed curve~$\widehat{\alpha}$ with a canonical orientation by requiring that $\lambda$ evaluated on a tangent vector to the oriented curve be real and positive.

Similarly, if $\alpha:I\rightarrow S$ is a finite-length trajectory with some nonzero phase $\theta$, then we can lift~$\alpha$ to a closed curve $\widehat{\alpha}$ in the canonical double cover. We can once again endow this closed curve with an orientation, but in this case, we require that $\lambda$ evaluated on a tangent vector to the oriented curve have positive imaginary part.

Thus we associate, to any finite-length trajectory $\alpha$ of the differential, a corresponding cycle $\widehat{\alpha}$ in the canonical double cover. The covering involution reverses the orientation of this cycle, and so we obtain an anti-invariant class $\widehat{\alpha}\in\Gamma_\phi$ in homology, which we call the class of $\alpha$.

\subsection{Finite-length trajectories}

The wall-crossing formula considered in this paper describes the jumping behavior of a certain integer counting finite length trajectories with a given homology class in~$\Gamma_\phi$. To define this integer, we will impose a genericity assumption on the differential~$\phi$. Namely, we say that a GMN differential $\phi$ is \emph{generic} if, for any two classes $\gamma_1$,~$\gamma_2\in\Gamma_\phi$, we have 
\[
\mathbb{R}\cdot Z_\phi(\gamma_1)=\mathbb{R}\cdot Z_\phi(\gamma_2) \implies \mathbb{Z}\cdot\gamma_1=\mathbb{Z}\cdot\gamma_2.
\]
Let $\phi$ be a generic GMN differential. Note that a closed trajectory lies in a cylinder of trajectories of the same phase and any other closed trajectory in this cylinder has the same class in~$\Gamma_\phi$. Therefore we can speak of the class of the cylinder.

\begin{definition}
The \emph{BPS invariant} associated to a generic GMN differential~$\phi$ and class $\gamma\in\Gamma_\phi$ is the integer 
\begin{align*}
\Omega_\phi(\gamma) &= |\{\text{non-closed saddle connections of class $\pm\gamma$}\}| \\
&\quad - 2\cdot|\{\text{nondegenerate cylinders of class $\pm\gamma$}\}|.
\end{align*}
\end{definition}

Note in particular that a closed saddle trajectory makes no contribution to $\Omega_\phi(\gamma)$ though it may appear as a boundary component of a nondegenerate cylinder. As we explain in Appendix~\ref{sec:TheMotivicWallCrossingFormula}, this integer $\Omega_\phi(\gamma)$ coincides with the BPS invariant defined in a general categorical setting in~\cite{KontsevichSoibelman1}. It is for this reason that these integers $\Omega_\phi(\gamma)$ obey the wall-crossing formula.

\section{From quadratic differentials to ideal triangulations}
\label{sec:FromQuadraticDifferentialsToIdealTriangulations}

In this section, we review some additional background material on quadratic differentials. We define a moduli space parametrizing GMN differentials and explain how a general point in this space determines an ideal triangulation of an associated marked bordered surface.

\subsection{Critical points}

Let $\phi$ be a GMN differential. Suppose that $p$ is a finite critical point of~$\phi$, and let $k$ be the order of the singularity so that $k=1$ if $p$ is a simple zero and~$k=-1$ if $p$ is a simple pole. By the results of~\cite{Strebel}, there is a local coordinate $t$ defined in a neighborhood of~$p$ such that 
\[
\phi(t)=\left(\frac{k+2}{2}\right)^2\cdot t^kdt^2.
\]
Away from the point~$p$, the function $w=t^{\frac{k+2}{2}}$ is a distinguished local coordinate. The horizontal trajectories in a neighborhood of~$p$ are illustrated in Figure~\ref{fig:finitecriticalpoint}.

\begin{figure}[ht]
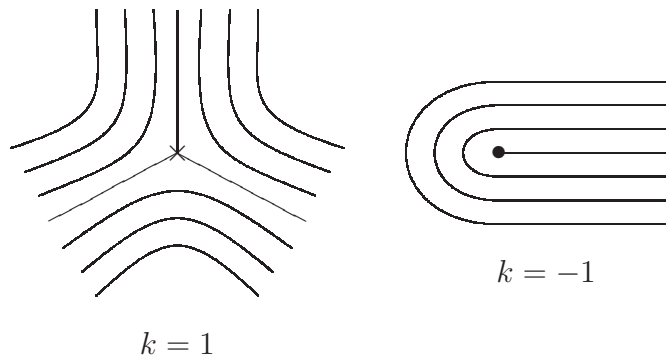

\begin{center}
\[
\xy /l3pc/:
(1,0)*{}="O";  
(-0.35,0.72)*{}="U";  
(-0.75,-0.05)*{}="X1";
(-0.6,0.2)*{}="X2"; 
(-0.45,0.45)*{}="X3"; 
(-0.2,1)*{}="X4";  
(0,1.25)*{}="X5";  
(0.15,1.5)*{}="X6"; 
(1.85,1.5)*{}="Y1";
(2,1.25)*{}="Y2";
(2.2,1)*{}="Y3";
(2.45,0.45)*{}="Y4"; 
(2.6,0.2)*{}="Y5";
(2.75,-0.05)*{}="Y6"; 
(1.85,-1.5)*{}="Z1";
(1.55,-1.5)*{}="Z2"; 
(1.25,-1.5)*{}="Z3";
(0.75,-1.5)*{}="Z4";
(0.45,-1.5)*{}="Z5";
(0.15,-1.5)*{}="Z6";
(2.35,0.72)*{}="V";  
(1,-1.5)*{}="W"; 
"O";"U" **\dir{-};  
"O";"V" **\dir{-}; 
"O";"W" **\dir{-}; 
"X4";"Y3" **\crv{(0.9,0.2) & (1.1,0.2)};
"X5";"Y2" **\crv{(0.9,0.5) & (1.1,0.5)}; 
"X6";"Y1" **\crv{(0.9,0.8) & (1.1,0.8)};
"Y4";"Z3" **\crv{(1.35,0) & (1.15,0)};
"Y5";"Z2" **\crv{(1.5,-0.2) & (1.5,-0.3)};
"Y6";"Z1" **\crv{(1.65,-0.4) & (1.85,-0.6)};
"Z4";"X3" **\crv{(0.85,0) & (0.65,0)};
"Z5";"X2" **\crv{(0.5,-0.3) & (0.5,-0.2)};
"Z6";"X1" **\crv{(0.15,-0.6) & (0.35,-0.4)};
(1,0)*{\times};
(1,2)*{k=1};
\endxy
\quad
\xy /l3pc/:
(1,0)*{}="O"; 
(-0.75,0)*{}="U"; 
(-0.75,-0.75)*{}="U1";
(-0.75,-0.5)*{}="U2";
(-0.75,-0.25)*{}="U3"; 
(-0.75,0.25)*{}="U4";
(-0.75,0.5)*{}="U5";
(-0.75,0.75)*{}="U6";
(1,0.75)*{}="T6";
(1,0.5)*{}="T5";
(1,0.25)*{}="T4";
(1,-0.25)*{}="T3";
(1,-0.5)*{}="T2";
(1,-0.75)*{}="T1";
"O";"U" **\dir{-}; 
"T6";"U6" **\dir{-}; 
"T5";"U5" **\dir{-}; 
"T4";"U4" **\dir{-}; 
"T3";"U3" **\dir{-}; 
"T2";"U2" **\dir{-}; 
"T1";"U1" **\dir{-}; 
"T3";"T4" **\crv{(1.5,-0.25) & (1.5,0.25)};
"T2";"T5" **\crv{(1.9,-0.5) & (1.9,0.5)};
"T1";"T6" **\crv{(2.3,-0.75) & (2.3,0.75)};
(1,0)*{\bullet};
(0.5,1.25)*{k=-1};
\endxy
\]
\end{center}
\caption{The horizontal foliation near a finite critical point.\label{fig:finitecriticalpoint}}
\end{figure}

Similarly, if $p$ is a pole of~$\phi$ of order two, then there is a local coordinate $t$ defined in a neighborhood of~$p$ such that 
\[
\phi(t)=\frac{r}{t^2}dt^2
\]
for some well defined constant $r\in\mathbb{C}^*$. We define the \emph{residue} of $\phi$ at $p$ to be the quantity 
\[
\Res_p(\phi)=\pm4\pi i\sqrt{r},
\]
which is well defined up to a sign. Away from~$p$, any branch of the function $w=\sqrt{r}\log(t)$ is a distinguished local coordinate. The horizontal foliation can exhibit three possible behaviors in the $t$-plane depending on the value of the residue at~$p$: 
\begin{enumerate}
\item If $\Res_p(\phi)\in\mathbb{R}$, then the horizontal trajectories are concentric circles centered at the pole.
\item If $\Res_p(\phi)\in i\mathbb{R}$, then the horizontal trajectories are radial arcs emanating from the pole.
\item If $\Res_p(\phi)\not\in\mathbb{R}\cup i\mathbb{R}$, then the horizontal trajectories are logarithmic spirals that wrap around the pole.
\end{enumerate}
Figure~\ref{fig:doublepole} illustrates the three types of foliations.

\begin{figure}[ht]
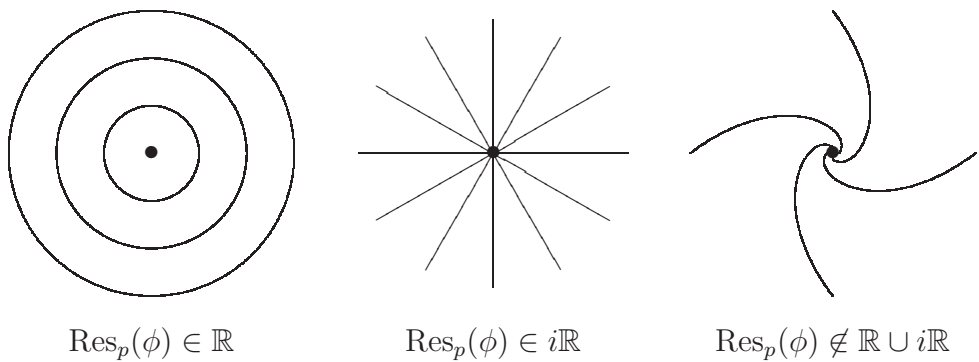

\begin{center}
\[
\xy /l1.5pc/:
(1,-3)*\xycircle(3,3){-};
(1,-2)*\xycircle(2,2){-};
(1,-1)*\xycircle(1,1){-};
(1,0)*{\bullet};
(1,4)*{\Res_p(\phi)\in\mathbb{R}};
\endxy
\qquad
\qquad
\xy /l1.5pc/:
{\xypolygon12"A"{~:{(2,2):}~>{}}};
{(1,0)\PATH~={**@{-}}'"A1"};
{(1,0)\PATH~={**@{-}}'"A2"};
{(1,0)\PATH~={**@{-}}'"A3"};
{(1,0)\PATH~={**@{-}}'"A4"};
{(1,0)\PATH~={**@{-}}'"A5"};
{(1,0)\PATH~={**@{-}}'"A6"};
{(1,0)\PATH~={**@{-}}'"A7"};
{(1,0)\PATH~={**@{-}}'"A8"};
{(1,0)\PATH~={**@{-}}'"A9"};
{(1,0)\PATH~={**@{-}}'"A10"};
{(1,0)\PATH~={**@{-}}'"A11"};
{(1,0)\PATH~={**@{-}}'"A12"};
(1,0)*{\bullet};
(1,4)*{\Res_p(\phi)\in i\mathbb{R}};
\endxy
\qquad
\xy /l1.5pc/:
(1,0)*{\bullet};
(1.13,0);(-2,0) **\crv{(1.5,0.5) & (0,1.5)};
(1,-0.13);(1,3) **\crv{(1.5,-0.5) & (2.5,1)};
(0.87,0);(4,0) **\crv{(0.5,-0.5) & (2,-1.5)};
(1,0.13);(1,-3) **\crv{(0.5,0.5) & (-0.5,-1)};
(1,4)*{\Res_p(\phi)\not\in\mathbb{R}\cup i\mathbb{R}};
\endxy
\]
\end{center}
\caption{The horizontal foliation near a pole of order two.\label{fig:doublepole}}
\end{figure}

Finally, suppose that $p$ is a pole of order $m\geq3$. By the results of~\cite{Strebel}, there is a local coordinate~$t$ such that 
\[
\phi(t)=\left(\frac{2-m}{2}t^{-m/2}+O(t^{-1})\right)^2dt^2 \quad\text{as $t\rightarrow0$}.
\]
In this case, one can show that there are $m-2$ distinguished tangent directions at~$p$. There is a neighborhood $U$ of $p$ such that any horizontal trajectory that enters $U$ eventually tends to $p$ along one of these tangent directions. We illustrate this for small values of~$m$ in Figure~\ref{fig:higherorderpole}.

\begin{figure}[ht]
\begin{center}
\[
\xy /l3pc/:
{\xypolygon3"T"{~:{(2,0):}~>{}}},
{\xypolygon3"S"{~:{(1.5,0):}~>{}}},
{\xypolygon3"R"{~:{(1,0):}~>{}}},
(1,0)*{}="O"; 
(-0.35,0.72)*{}="U"; 
(2.35,0.72)*{}="V"; 
(1,-1.5)*{}="W"; 
"O";"U" **\dir{-}; 
"O";"V" **\dir{-}; 
"O";"W" **\dir{-}; 
"O";"T1" **\crv{(2,0.75) & (2.25,1.85)};
"O";"T1" **\crv{(0,0.75) & (-0.25,1.85)};
"O";"T2" **\crv{(0,0.4) & (-1.2,0.25)};
"O";"T2" **\crv{(1,-1) & (0,-2.2)};
"O";"T3" **\crv{(1,-1) & (2,-2.2)};
"O";"T3" **\crv{(2,0.4) & (3.2,0.25)};
"O";"S1" **\crv{(1.75,0.56) & (2,1.5)};
"O";"S1" **\crv{(0.25,0.56) & (0,1.5)};
"O";"S2" **\crv{(0,0.3) & (-0.9,0.19)};
"O";"S2" **\crv{(0.9,-0.8) & (0.3,-1.7)};
"O";"S3" **\crv{(2,0.3) & (2.9,0.19)};
"O";"S3" **\crv{(1.1,-0.8) & (1.7,-1.7)};
"O";"R1" **\crv{(1.5,0.5) & (1.75,1)};
"O";"R1" **\crv{(0.5,0.5) & (0.25,1)};
"O";"R2" **\crv{(0.5,0.1) & (-0.3,0.25)};
"O";"R2" **\crv{(0.75,-0.8) & (0.5,-1)};
"O";"R3" **\crv{(1.5,0.1) & (2.3,0.25)};
"O";"R3" **\crv{(1.25,-0.8) & (1.5,-1)};
(1,0)*{\bullet};
(1,2.25)*{m=5};
\endxy
\qquad
\xy /l3pc/:
{\xypolygon4"A"{~:{(2,0):}~>{}}},
{\xypolygon4"B"{~:{(1.5,0):}~>{}}},
{\xypolygon4"C"{~:{(1,0):}~>{}}},
(1,0)*{}="O"; 
(1,-1.75)*{}="T"; 
(-0.75,0)*{}="U"; 
(1,1.75)*{}="V"; 
(2.75,0)*{}="W"; 
"O";"T" **\dir{-}; 
"O";"U" **\dir{-}; 
"O";"V" **\dir{-}; 
"O";"W" **\dir{-}; 
"O";"A1" **\crv{(1,1.5) & (1.5,2.25)};
"O";"A1" **\crv{(2.5,0) & (3.25,0.5)};
"O";"A2" **\crv{(1,1.5) & (0.5,2.25)};
"O";"A2" **\crv{(-0.5,0) & (-1.25,0.5)};
"O";"A3" **\crv{(1,-1.5) & (0.5,-2.25)};
"O";"A3" **\crv{(-0.5,0) & (-1.25,-0.5)};
"O";"A4" **\crv{(1,-1.5) & (1.5,-2.25)};
"O";"A4" **\crv{(2.5,0) & (3.25,-0.5)};
"O";"B1" **\crv{(1,1) & (1.5,1.6)};
"O";"B1" **\crv{(2,0) & (2.6,0.5)};
"O";"B2" **\crv{(1,1) & (0.5,1.6)};
"O";"B2" **\crv{(0,0) & (-0.6,0.5)};
"O";"B3" **\crv{(1,-1) & (0.5,-1.6)};
"O";"B3" **\crv{(0,0) & (-0.6,-0.5)};
"O";"B4" **\crv{(1,-1) & (1.5,-1.6)};
"O";"B4" **\crv{(2,0) & (2.6,-0.5)};
"O";"C1" **\crv{(1,0.75) & (1.25,1)};
"O";"C1" **\crv{(1.75,0) & (2,0.25)};
"O";"C2" **\crv{(1,0.75) & (0.75,1)};
"O";"C2" **\crv{(0.25,0) & (0,0.25)};
"O";"C3" **\crv{(1,-0.75) & (0.75,-1)};
"O";"C3" **\crv{(0.25,0) & (0,-0.25)};
"O";"C4" **\crv{(1,-0.75) & (1.25,-1)};
"O";"C4" **\crv{(1.75,0) & (2,-0.25)};
(1,0)*{\bullet};
(1,2.25)*{m=6};
\endxy
\qquad
\dots
\]
\end{center}
\caption{The horizontal foliation near a pole of order $m\geq3$.\label{fig:higherorderpole}}
\end{figure}

\subsection{Moduli spaces}

By a \emph{marked bordered surface}, we mean a compact, connected, oriented surface $\mathbb{S}$ with boundary together with a nonempty finite set $\mathbb{M}\subset\mathbb{S}$ of marked points such that every boundary component of~$\mathbb{S}$ contains at least one marked point. A marked point in the interior of~$\mathbb{S}$ is called a \emph{puncture}, and the set of all punctures is denoted $\mathbb{P}\subset\mathbb{M}$. An isomorphism of marked bordered surfaces $(\mathbb{S}_1,\mathbb{M}_1)$ and $(\mathbb{S}_2,\mathbb{M}_2)$ is an orientation preserving diffeomorphism $f:\mathbb{S}_1\rightarrow\mathbb{S}_2$ which induces a bijection $\mathbb{M}_1\rightarrow\mathbb{M}_2$. Two such isomorphisms are said to be isotopic if they are related by an isotopy through isomorphisms.

A pair $(S,\phi)$ consisting of a compact Riemann surface~$S$ and a GMN differential~$\phi$ on~$S$ determines an associated marked bordered surface $(\mathbb{S},\mathbb{M})$ by the following construction. To define the surface~$\mathbb{S}$, we perform an oriented real blow up of the Riemann surface~$S$ at each pole of~$\phi$ of order $\geq3$. As we have seen, there are finitely many distinguished tangent directions at each pole of order~$\geq3$. These determine points on the boundary of~$\mathbb{S}$, and we define~$\mathbb{M}$ to be the set consisting of these points together with the poles of order~$\leq2$ regarded as punctures.

Let us now fix a marked bordered surface $(\mathbb{S},\mathbb{M})$. If $(S,\phi)$ is a pair consisting of a compact Riemann surface~$S$ and a GMN~differential~$\phi$ on~$S$, then we define a \emph{marking} of the pair $(S,\phi)$ by $(\mathbb{S},\mathbb{M})$ to be an isotopy class of isomorphisms from $(\mathbb{S},\mathbb{M})$ to the marked bordered surface determined by~$(S,\phi)$. A \emph{marked GMN differential} is a triple $(S,\phi,\theta)$ where $S$ is a compact Riemann surface equipped with a GMN~differential~$\phi$ and $\theta$ is a marking of the pair $(S,\phi)$ by~$(\mathbb{S},\mathbb{M})$. We will consider two such triples $(S_1,\phi_1,\theta_1)$ and $(S_2,\phi_2,\theta_2)$ to be equivalent if there is an isomorphism $f:S_1\rightarrow S_2$ of Riemann surfaces satisfying $f^*(\phi_2)=\phi_1$ and commuting with the markings~$\theta_i$ in the obvious way. We denote by $\mathscr{Q}(\mathbb{S},\mathbb{M})$ the moduli space of equivalence classes of marked GMN differentials. Assuming $\mathbb{S}$ is not a genus zero surface with $|\mathbb{M}|\leq2$, Proposition~6.2 of~\cite{Allegretti19b} implies that $\mathscr{Q}(\mathbb{S},\mathbb{M})$ has the structure of a complex manifold.

In fact, it will be important to enhance this construction slightly and consider a moduli space parametrizing marked GMN differentials with additional data. Recall that if $\phi$ is a GMN~differential having a pole of order two at~$p$, then the residue $\Res_p(\phi)$ is well defined up to sign. We define a signing for~$\phi$ to be a choice of sign for the residue at each pole of order two. There is a branched cover 
\[
\mathscr{Q}^\pm(\mathbb{S},\mathbb{M})\rightarrow\mathscr{Q}(\mathbb{S},\mathbb{M})
\]
of degree $2^{|\mathbb{P}|}$ obtained by choosing a signing for each differential in $\mathscr{Q}(\mathbb{S},\mathbb{M})$ having a pole of order two. It is branched precisely over the locus of differentials having simple poles. Assuming once again that $\mathbb{S}$ is not a genus zero surface with $|\mathbb{M}|\leq2$, Proposition~6.3 of~\cite{Allegretti19b} gives $\mathscr{Q}^\pm(\mathbb{S},\mathbb{M})$ the structure of a complex manifold.

\subsection{Horizontal strip decomposition}

In general, if $\phi$ is a GMN differential on a compact Riemann surface~$S$, then any horizontal trajectory of~$\phi$ is necessarily one of the following (see~\cite{Strebel}, Sections~9--11):
\begin{enumerate}
\item A \emph{saddle trajectory}, which connects two finite critical points of~$\phi$.
\item A \emph{separating trajectory}, which connects a finite and an infinite critical point of~$\phi$.
\item A \emph{generic trajectory}, which connects two infinite critical points of~$\phi$.
\item A \emph{closed trajectory}, which is a simple closed curve in $S\setminus\Crit(\phi)$.
\item A \emph{recurrent trajectory}, which has a limit set with nonempty interior in~$S$.
\end{enumerate}
We will be particularly interested in quadratic differentials having no saddle trajectories. Such a differential is said to be \emph{saddle-free}.

Suppose $\phi$ is a saddle-free GMN differential with $\Crit_\infty(\phi)\neq\emptyset$. Then according to Lemma~3.1 of~\cite{BridgelandSmith}, this differential $\phi$ has no closed or recurrent trajectories. Moreover, since $\phi$ has at most finitely many zeros, there can be at most finitely many separating trajectories. If we remove these separating trajectories from~$S$, then the remaining open surface splits as a union of connected components, and each component is one of the following:
\begin{enumerate}
\item A \emph{horizontal strip}, which is is a maximal domain in~$S$ that corresponds, via the distinguished local coordinate, to a region 
\[
\{w\in\mathbb{C}:a<\Im(w)<b\}\subset\mathbb{C}.
\]
Every trajectory in a horizontal strip is generic, connecting two (not necessarily distinct) poles of~$\phi$. Each component of the boundary is composed of separating trajectories.
\item A \emph{half plane}, which is a maximal domain in~$S$ that corresponds, via the distinguished local coordinate, to a region 
\[
\{w\in\mathbb{C}:\Im(w)>0\}\subset\mathbb{C}.
\]
The trajectories in a half plane are generic, connecting a fixed pole of order $>2$ to itself. The boundary is composed of separating trajectories.
\end{enumerate}
This decomposition of the surface into horizontal strips and half planes is called the \emph{horizontal strip decomposition}.

\subsection{Ideal triangulations}

If $(\mathbb{S},\mathbb{M})$ is a marked bordered surface, then an \emph{arc} on $(\mathbb{S},\mathbb{M})$ is defined as a smooth path $\gamma$ in~$\mathbb{S}$ connecting points of~$\mathbb{M}$ whose interior lies in $\mathbb{S}\setminus\mathbb{M}$ and which has no self-intersections in its interior. In addition, we require that $\gamma$ is not homotopic, relative to its endpoints, to a single point or to a path in~$\partial\mathbb{S}$ whose interior contains no marked points. Two arcs are considered to be equivalent if they are homotopic relative to their endpoints, and they are \emph{compatible} if there exist arcs in their respective equivalence classes which do not intersect in the interior of the surface~$\mathbb{S}$.

An \emph{ideal triangulation} of~$(\mathbb{S},\mathbb{M})$ is defined to be a maximal set of pairwise compatible arcs, considered up to equivalence. When talking about an ideal triangulation~$T$ of~$(\mathbb{S},\mathbb{M})$, we always fix representatives for its arcs so that no two arcs intersect in the interior of~$\mathbb{S}$. Then a \emph{triangle} of~$T$ is defined to be the closure in~$\mathbb{S}$ of a component of the complement of the arcs of~$T$. Any triangle is homeomorphic to a disk with two or three marked points. If a triangle contains only two marked points, it is said to be \emph{self-folded}. In this case, it contains an arc in its interior called the \emph{self-folded edge}. The boundary of a self-folded triangle is called the \emph{encircling edge}.

If $\phi$ is a complete, saddle-free GMN differential on a compact Riemann surface~$S$, then $\phi$ determines an ideal triangulation of the associated marked bordered surface $(\mathbb{S},\mathbb{M})$. Indeed, $\phi$ determines a horizontal strip decomposition of the underlying Riemann surface~$S$, and we can choose a single generic trajectory within each of the horizontal strips of this decomposition. After passing to the real blow up~$\mathbb{S}$, these trajectories become arcs of an ideal triangulation. This ideal triangulation is known as the \emph{WKB triangulation} for~$\phi$.

In general, if $(\mathbb{S},\mathbb{M})$ is a marked bordered surface, then a \emph{signing} for $(\mathbb{S},\mathbb{M})$ is a function $\epsilon:\mathbb{P}\rightarrow\{\pm1\}$ associating a sign $\epsilon(p)=\pm1$ to every puncture $p\in\mathbb{P}\subset\mathbb{M}$. A \emph{signed triangulation} of~$(\mathbb{S},\mathbb{M})$ is a pair $(T,\epsilon)$ consisting of an ideal triangulation $T$ and a signing~$\epsilon$ of~$(\mathbb{S},\mathbb{M})$. Let us define the \emph{valency} of a puncture $p\in\mathbb{P}$ to be the number of half arcs of~$T$ that are incident to~$p$. Then two signed triangulations $(T_1,\epsilon_1)$ and $(T_2,\epsilon_2)$ are considered to be equivalent if $T_1=T_2$ and the signings $\epsilon_1$ and~$\epsilon_2$ differ only at punctures of valency one. An equivalence class of signed triangulations is called a \emph{tagged triangulation}. (Note that this is closely related to Gaiotto, Moore, and Neitzke's notion of a pop, which is an operation acting on decorated triangulations~\cite{GMN}.)

Suppose $T$ is the WKB triangulation for a complete, saddle-free differential~$\phi\in\mathscr{Q}^\pm(\mathbb{S},\mathbb{M})$. If $p$ is a pole of order two with real residue, then $p$ forms one boundary component of a cylinder of horizontal trajectories. But then the other boundary component consists of saddle trajectories, contradicting the assumption that $\phi$ is saddle-free. It follows that the residue at a pole of order two cannot be real. The choice of the point $\phi\in\mathscr{Q}^\pm(\mathbb{S},\mathbb{M})$ includes a choice of sign for the residue for each pole $p$ of order two, and we can choose the sign $\epsilon(p)\in\{\pm1\}$ so that 
\[
\epsilon(p)\cdot\Res_p(\phi)\in\mathbb{H}
\]
where $\mathbb{H}\subset\mathbb{C}$ is the upper half plane.

In this way, we associate a tagged triangulation to any complete, saddle-free differential $\phi\in\mathscr{Q}^\pm(\mathbb{S},\mathbb{M})$. Conversely, if $\tau$ is an arbitrary tagged triangulation of~$(\mathbb{S},\mathbb{M})$, then there is an associated subset $\mathcal{C}_\tau\subset\mathscr{Q}^\pm(\mathbb{S},\mathbb{M})$ consisting of all points $\phi$ with associated tagged triangulation~$\tau$. These subsets $\mathcal{C}_\tau$ are open (see Sections~5.2 and~10.3 of~\cite{BridgelandSmith}) and their union $\coprod_\tau\mathcal{C}_\tau\subset\mathscr{Q}^\pm(\mathbb{S},\mathbb{M})$ is the subset of all complete saddle-free differentials.

\section{Spin structures and the twisted torus}
\label{sec:SpinStructuresAndTheTwistedTorus}

In this section, we discuss spin structures arising from GMN differentials. The idea of the construction comes from~\cite{KontsevichZorich}. We discuss a related algebraic variety called the twisted torus, which appears in our formulation of the wall-crossing formula.

\subsection{Spin structures}

We begin by recalling the topological definition of a spin structure on a Riemann surface, following~\cite{Atiyah, KontsevichZorich}. Given a Riemann surface~$\Sigma$, let $P_\Sigma\rightarrow\Sigma$ be the bundle whose fiber over $p\in\Sigma$ is the circle of nonzero tangent directions at~$p$. Then a \emph{spin structure} on~$\Sigma$ is a double cover $Q\rightarrow P_\Sigma$ whose restriction to any fiber of~$P_\Sigma$ is the standard double cover $S^1\rightarrow S^1$.

As shown in Section~3 of~\cite{Atiyah}, a line bundle $L$ on~$\Sigma$ satisfying $L\otimes L\cong\omega_\Sigma$ determines a corresponding spin structure on~$\Sigma$. Indeed, consider the map of line bundles 
\begin{equation}
\label{eqn:doublecover}
L^{-1}\rightarrow L^{-1}\otimes L^{-1}=\omega_{\Sigma}^{-1}
\end{equation}
taking a section $s$ of~$L^{-1}$ to the tensor product $s\otimes s$. Note that the sections $s$ and $-s$ have the same image so that \eqref{eqn:doublecover} induces a double cover $Q\rightarrow P_\Sigma$. It restricts to the standard double cover $S^1\rightarrow S^1$ on the fibers of~$P_\Sigma$ and therefore defines a spin structure on~$\Sigma$.

Now suppose we are given a GMN differential~$\phi$ on a compact Riemann surface~$S$. The differential $\phi$ determines the canonical double cover $\Sigma_\phi\rightarrow S$, and there is a canonical meromorphic 1-form~$\lambda$ on~$\Sigma_\phi$ which is holomorphic on the punctured surface~$\Sigma_\phi^\circ$. This 1-form $\lambda$ can be regarded as a global section of the line bundle $\omega_{\Sigma_\phi^\circ}$. Therefore, if we write $D$ for the divisor of zeros of~$\lambda$, then $\omega_{\Sigma_\phi^\circ}$ is isomorphic to the line bundle~$[D]$ associated to~$D$. By Lemma~\ref{lem:doublezeros}, we can write 
\[
D=2p_1+\dots+2p_s
\]
where $p_1,\dots,p_s$ are the zeros of~$\lambda$. We consider the line bundle 
\begin{equation}
\label{eqn:thetacharacteristic}
L\coloneqq[p_1+\dots+p_s],
\end{equation}
which has the property $L\otimes L\cong\omega_{\Sigma_\phi^\circ}$. Thus we have a canonical square root of the holomorphic cotangent bundle, and by the remarks above, there is an associated spin structure on~$\Sigma_\phi^\circ$.

\subsection{Associated quadratic forms}
\label{sec:AssociatedQuadraticForms}

If $Q\rightarrow P_\Sigma$ is a spin structure for the Riemann surface~$\Sigma$, then the group of deck transformations for the covering space $Q$ is isomorphic to~$\mathbb{Z}_2$. Thus we have a group homomorphism $\pi_1(P_\Sigma)\rightarrow\mathbb{Z}_2$, and this homomorphism factors through $H_1(P_\Sigma;\mathbb{Z}_2)$. In this way, we see that spin structures on~$\Sigma$ correspond bijectively to linear maps $H_1(P_\Sigma;\mathbb{Z}_2)\rightarrow\mathbb{Z}_2$ which are nonzero on the cycle represented by the $S^1$ fiber of~$P_\Sigma$. In particular, spin structures can be viewed as elements of $H^1(P_\Sigma;\mathbb{Z}_2)$.

Note that if $\gamma$ is a smooth oriented simple closed curve on~$\Sigma$, then by framing~$\gamma$ with the unit tangent vector field we get a corresponding curve $\vec{\gamma}$ in~$P_\Sigma$ which projects to~$\gamma$. If $\beta$ is the oriented curve obtained from $\gamma$ by reversing the orientation, then the corresponding curve $\vec{\beta}$ is in fact homotopic to~$\vec{\gamma}$ with its orientation reversed; the homotopy is given by simultaneously rotating all framing vectors by an angle~$\pi$. Thus the mod~2 homology class of~$\vec{\gamma}$ in~$P_\Sigma$ is independent of the chosen orientation.

Now suppose that $\gamma=\sum_{i=1}^m\gamma_i$ is a chain representing a class in $H_1(\Sigma;\mathbb{Z}_2)$ where each $\gamma_i$ is a smooth simple closed curve on~$\Sigma$. Choosing an orientation for each of these curves~$\gamma_i$, we obtain curves~$\vec{\gamma}_i$ in~$P_\Sigma$. Let $z$ denote the cycle in~$P_\Sigma$ given by the $S^1$ fiber. Then 
\[
\tilde{\gamma}=\sum_{i=1}^m\vec{\gamma}_i+mz
\]
represents an element of $H_1(P_\Sigma;\mathbb{Z}_2)$. By Theorem~1A in~\cite{Johnson}, this element depends only on the class of~$\gamma$ in $H_1(\Sigma,\mathbb{Z}_2)$ and not on its representation as a sum of smooth simple closed curves or on the choice of orientations for these curves. Therefore we have a map of sets $H_1(\Sigma;\mathbb{Z}_2)\rightarrow H_1(P_\Sigma;\mathbb{Z}_2)$ given by $\gamma\mapsto\tilde{\gamma}$. The following result explains precisely how this map fails to be a group homomorphism.

\begin{proposition}[\cite{Johnson}, Theorem~1B]
\label{prop:nonhomom}
The mapping defined above satisfies 
\[
\widetilde{\gamma_1+\gamma_2}=\tilde{\gamma_1}+\tilde{\gamma_2}+(\gamma_1\cdot\gamma_2)z
\]
where $\gamma_1\cdot\gamma_2$ denotes the intersection pairing of~$\gamma_1$,~$\gamma_2\in H_1(\Sigma;\mathbb{Z}_2)$.
\end{proposition}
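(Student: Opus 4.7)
The plan is to represent $\gamma_1$ and $\gamma_2$ by sums of smooth simple closed curves in general position, resolve all their crossings to obtain a smooth representative of $\gamma_1+\gamma_2$, and then reduce the identity to a local framing analysis at each crossing combined with a parity count of components.

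Concretely, choose representatives $\gamma_1=\alpha_1+\cdots+\alpha_r$ and $\gamma_2=\beta_1+\cdots+\beta_s$ as disjoint unions of smooth simple closed curves in general position, with every crossing a transverse double point of some $\alpha_i\cap\beta_j$. Let $n$ be the total number of such crossings, so that $\gamma_1\cdot\gamma_2\equiv n\pmod 2$. Performing a smoothing at each crossing in a small disk yields a smooth $1$-manifold $\sigma$ with $k$ components representing $\gamma_1+\gamma_2$ in $H_1(\Sigma;\mathbb{Z}_2)$. By the definition of the tilde map,
\[
\widetilde{\gamma_1+\gamma_2}=\vec\sigma+kz,\qquad \tilde\gamma_1+\tilde\gamma_2=\sum_i\vec{\alpha_i}+\sum_j\vec{\beta_j}+(r+s)z,
\]
so the identity reduces to proving (a) $\vec\sigma\equiv\sum_i\vec{\alpha_i}+\sum_j\vec{\beta_j}$ in $H_1(P_\Sigma;\mathbb{Z}_2)$, and (b) $k\equiv r+s+n\pmod 2$.

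Assertion (b) is immediate: each of the $n$ smoothings changes the component count of the current $1$-manifold by exactly $\pm 1$, so starting from $r+s$ components one arrives at $k\equiv r+s+n\pmod 2$. Assertion (a) is purely local. Outside small disks $D_\ell$ around the crossings the framed chains $\vec\sigma$ and $\sum_i\vec{\alpha_i}+\sum_j\vec{\beta_j}$ coincide, so their mod-$2$ difference is a $1$-cycle in $P_\Sigma$ supported in $\bigsqcup_\ell P_{D_\ell}\cong\bigsqcup_\ell D_\ell\times S^1$. Each local contribution lies in $H_1(D_\ell\times S^1;\mathbb{Z}_2)\cong\mathbb{Z}_2$, generated by the fiber $z$ and detected by the mod-$2$ winding number in the $S^1$ factor. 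A direct calculation in the local model where two arcs meet at an angle $\phi$ and are smoothed by two minimally curved arcs shows that the tangent frames of the two smoothed arcs rotate by angles whose sum is zero modulo $2\pi$, so each local class vanishes.

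Combining (a) and (b),
\[
\widetilde{\gamma_1+\gamma_2}=\sum_i\vec{\alpha_i}+\sum_j\vec{\beta_j}+(r+s+n)z=\tilde\gamma_1+\tilde\gamma_2+nz=\tilde\gamma_1+\tilde\gamma_2+(\gamma_1\cdot\gamma_2)z,
\]
as required. The main obstacle is the local framing computation in (a): although the vanishing of the winding number is transparent in the minimally curved model, one must verify carefully that the total rotation of the two smoothed tangent framings always vanishes modulo $2\pi$ and that this conclusion is independent of the intersection angle and of other local smoothing choices. Once this local picture is pinned down, the global identity follows from the mod-$2$ parity bookkeeping in (b).
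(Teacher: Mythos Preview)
The paper does not prove this proposition; it merely quotes it as Theorem~1B of Johnson and uses it as a black box. So there is no ``paper's own proof'' to compare against, and your proposal should be read as a reconstruction of Johnson's argument.

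Your outline is essentially the standard one and is correct. Both reductions are sound: assertion~(b) is a trivial parity count, and assertion~(a) correctly localises the problem to the disks around the crossings, where $H_1(D_\ell\times S^1;\mathbb{Z}_2)\cong\mathbb{Z}_2$ is generated by the fibre class~$z$ and detected by the total tangent winding. One point deserves sharpening. You write that the vanishing of the local winding is ``independent of \ldots\ other local smoothing choices''; this is not quite true. For a fixed crossing one can smooth with arcs whose tangent framings take the long way around~$S^1$, and then the local class is~$z$ rather than~$0$. What is true---and what your argument actually uses---is that you are free to \emph{choose} the minimally curved smoothing at every crossing, and for that particular choice the two smoothed arcs rotate by~$\pm\phi$ with sum~$0$. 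Since Theorem~1A guarantees that $\widetilde{\gamma_1+\gamma_2}$ is independent of the chosen smooth representative of $\gamma_1+\gamma_2$, fixing this convenient smoothing is legitimate and the proof goes through. With that clarification, your local computation (two arcs rotating by $+\phi$ and $-\phi$) is exactly the right one, and the global identity follows as you wrote.
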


By a $\mathbb{Z}_2$-valued \emph{quadratic form} on $H_1(\Sigma;\mathbb{Z}_2)$ with the associated bilinear form $(\gamma_1,\gamma_2)\mapsto\gamma_1\cdot\gamma_2$, we mean any function $q:H_1(\Sigma;\mathbb{Z}_2)\rightarrow\mathbb{Z}_2$ such that 
\begin{equation}
\label{eqn:quadraticform}
q(\gamma_1+\gamma_2)=q(\gamma_1)+q(\gamma_2)+\gamma_1\cdot\gamma_2
\end{equation}
for $\gamma_1$,~$\gamma_2\in H_1(\Sigma;\mathbb{Z}_2)$. It follows from Proposition~\ref{prop:nonhomom} that if $\omega\in H^1(P_\Sigma;\mathbb{Z}_2)$ is a spin structure, then there is an associated quadratic form $q_\omega$ given by 
\[
q_\omega(\gamma)=\omega(\tilde{\gamma})
\]
where the right hand side denotes the pairing of homology and cohomology.

\subsection{Spin structures from differentials}

We have now seen that any spin structure on a Riemann surface gives rise to an associated quadratic form. In particular, if we have a GMN differential~$\phi$, then there is an associated spin structure $\omega$ on the surface $\Sigma_\phi^\circ$, and this gives rise to a quadratic form $q_\omega$ on $H_1(\Sigma_\phi^\circ;\mathbb{Z}_2)$. Given a class $\gamma\in\Gamma_\phi$, let us write $\gamma$ also for the image of this class under the map $\Gamma_\phi\rightarrow H_1(\Sigma_\phi^\circ;\mathbb{Z}_2)$ given by reduction modulo~2. We will be interested in the map 
\[
\xi:\Gamma_\phi\rightarrow\mathbb{C}^*, \quad \xi(\gamma)=(-1)^{q_\omega(\gamma)}.
\]
By the identity~\eqref{eqn:quadraticform}, this map satisfies 
\begin{equation}
\label{eqn:basepoint}
\xi(\gamma_1+\gamma_2)=(-1)^{\langle\gamma_1,\gamma_2\rangle}\xi(\gamma_1)\xi(\gamma_2)
\end{equation}
where $\langle\gamma_1,\gamma_2\rangle$ denotes the intersection pairing of~$\gamma_1$,~$\gamma_2\in\Gamma_\phi$. Below we will give an alternative description of this quadratic form, using ideas from~\cite{KontsevichZorich} (see also~\cite{GMN}, Section~7.7).

Consider the differential $\lambda$ on the Riemann surface~$\Sigma_\phi^\circ$. At any point of $\Sigma_\phi^\circ$ which is not a zero of~$\lambda$, we can find a unique tangent vector $v$ such that $\lambda(v)=1$. These tangent vectors provide a nonvanishing section of the tangent bundle of~$\Sigma_\phi^\circ$ over the complement of the zeros of~$\lambda$, and therefore the tangent bundle is trivial over this set. Let $\gamma$ be a smooth oriented closed curve on~$\Sigma_\phi^\circ$ which does not meet any zero of~$\lambda$. Since every tangent space over~$\gamma$ is canonically identified with~$\mathbb{C}$, we get a map $G:\gamma\rightarrow S^1\subset\mathbb{C}$ sending a point $p$ on~$\gamma$ to the tangent direction determined by the orientation of~$\gamma$ at~$p$. We define the \emph{index} $\ind_\gamma(\lambda)$ to be the integer such that $2\pi\cdot\ind_\gamma(\lambda)$ is the total change in angle between $G(p)$ and $1\in S^1$ as $p$ goes around the curve~$\gamma$.

\begin{proposition}
\label{prop:indexquadraticform}
Let $\omega\in H^1(P_\Sigma;\mathbb{Z}_2)$ be the spin structure on $\Sigma=\Sigma_\phi^\circ$ determined by the GMN~differential~$\phi$. If $\gamma$ is any smooth oriented simple closed curve on~$\Sigma$ which does not meet a zero of~$\lambda$, then 
\[
q_\omega(\gamma)=\ind_\gamma(\lambda)+1\mod2.
\]
\end{proposition}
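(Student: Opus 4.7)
My strategy is to construct compatible trivializations of $P_\Sigma$ and of the spin-structure cover $Q$ over the open set $U = \Sigma_\phi^\circ \setminus \{\text{zeros of }\lambda\}$, and then evaluate both sides of the identity directly in those coordinates. The key observation is that the line bundle $L = [p_1 + \dots + p_s]$ appearing in \eqref{eqn:thetacharacteristic} carries a tautological section $s_L$ vanishing to first order at each $p_i$. By Lemma \ref{lem:doublezeros}, the form $\lambda$ vanishes to order exactly $2$ at each $p_i$, which matches the zero divisor of $s_L \otimes s_L$ as a section of $L \otimes L \cong \omega_{\Sigma_\phi^\circ}$. After absorbing a scalar into $s_L$, I may therefore assume $\lambda = s_L \otimes s_L$. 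On $U$ the section $s_L$ is nonvanishing, so its inverse $s_L^{-1}$ trivializes $L^{-1}|_U$, and the canonical vector field $v$ on $U$ characterized by $\lambda(v)=1$ is exactly the image of $s_L^{-1}$ under the squaring map $L^{-1} \to L^{-1} \otimes L^{-1} = \omega_\Sigma^{-1}$ of \eqref{eqn:doublecover}.

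Using $v$ to trivialize $P_\Sigma|_U \cong U \times S^1$, the associated double cover $Q|_U \to P_\Sigma|_U$ therefore becomes $\mathrm{id}_U \times (z \mapsto z^2)$. Consequently, for any cycle $\eta$ in $P_\Sigma|_U$, the value $\omega(\eta) \in \mathbb{Z}_2$ is the mod-$2$ winding number of $\eta$ in the $S^1$ factor; in particular $\omega(z)=1$ for the fiber cycle. For $\vec{\gamma}$, note that a tangent vector $w$ at a point of $U$ has direction $\arg \lambda(w)$ in the $v$-trivialization. Thus $\vec{\gamma}$, which lifts $\gamma$ via the unit tangent framing, becomes the loop $t \mapsto (\gamma(t),\, \arg \lambda(\dot\gamma(t)))$ in $U \times S^1$. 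By the definition of $\ind_\gamma(\lambda)$, the $S^1$-component winds exactly $\ind_\gamma(\lambda)$ times, so $\omega(\vec{\gamma}) = \ind_\gamma(\lambda) \pmod 2$.

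Combining these computations with the identity $\tilde{\gamma} = \vec{\gamma} + z$ (the $m=1$ case of the construction preceding Proposition \ref{prop:nonhomom}) and the $\mathbb{Z}_2$-linearity of $\omega$ gives
\[
q_\omega(\gamma) = \omega(\tilde{\gamma}) = \omega(\vec{\gamma}) + \omega(z) = \ind_\gamma(\lambda) + 1 \pmod 2,
\]
as required. The main point requiring care is the identification in the first paragraph -- that $Q \to P_\Sigma$, as built from the squaring map on $L^{-1}$, really coincides with fiberwise squaring in the $v$-trivialization -- but this is forced by the equality $\lambda = s_L \otimes s_L$, which guarantees that $s_L^{-1}$ and $v$ are compatible trivializations in the precise sense that the former squares to the latter. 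The rest is a direct translation between framings of $\gamma$ and the index of $\lambda$ along $\gamma$.
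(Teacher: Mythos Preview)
Your argument is correct and follows essentially the same route as the paper's proof: both trivialize $L^{-1}$ over the complement of the zeros of $\lambda$ via $s_L^{-1}$, observe that under the squaring map this trivialization is sent to the $v$-trivialization of $\omega_\Sigma^{-1}$, deduce that $\omega(\vec{\gamma})\equiv\ind_\gamma(\lambda)\pmod 2$, and then add the fiber class $z$ to pass from $\vec{\gamma}$ to $\tilde{\gamma}$. The only cosmetic difference is that the paper phrases the middle step as ``the $v$-framed lift $\alpha$ of $\gamma$ lifts to a closed loop in $Q$'' and then writes $\vec{\gamma}=\alpha+\ind_\gamma(\lambda)\cdot z$ in $\pi_1$ of the torus over $\gamma$, whereas you work directly with the winding number in the $U\times S^1$ coordinates.
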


\begin{proof}
Let $\alpha$ be a loop in $P_\Sigma$ which is obtained by framing $\gamma\subset\Sigma$ so that the framing vector $v$ at any point of $\gamma$ satisfies $\lambda(v)=1$. By construction, the line bundle $L$ defined in~\eqref{eqn:thetacharacteristic} has a section that vanishes precisely on the set of zeros of $\lambda$. Hence we have a section of~$L$ which is nonvanishing on the complement of the zeros, and $L$ is trivial over this complement. The map~\eqref{eqn:doublecover} induces a double cover $Q\rightarrow P_\Sigma$, and the triviality of~$L$ implies that if we lift~$\alpha$ to this double cover, then the endpoints of the lifted curve coincide. In other words, the deck transformation of~$Q$ corresponding to $\alpha\in\pi_1(P_\Sigma)$ is trivial, which implies $\omega(\alpha)=0$ by the discussion in Section~\ref{sec:AssociatedQuadraticForms}.

On the other hand, if $z$ is the loop in~$P_\Sigma$ given by the $S^1$ fiber of $P_\Sigma\rightarrow\Sigma$ then we have $\omega(z)=1\in\mathbb{Z}_2$. Note that the preimage $T_\gamma$ of $\gamma\subset\Sigma$ in~$P_\Sigma$ is topologically a torus, and the loops $\alpha$ and $z$ generate the fundamental group $\pi_1(T_\gamma)$. If $\vec{\gamma}$ is the curve in~$P_\Sigma$ obtained by equipping~$\gamma$ with the field of unit vectors tangent to~$\gamma$, then $\vec{\gamma}$ is equal in~$\pi_1(P_\Sigma)$ to a concatenation of~$\alpha$ with $\ind_\gamma(\lambda)$ copies of the loop~$z$. From this we see that 
\[
\omega(\vec{\gamma})=\ind_\gamma(\lambda)\mod2
\]
which implies the desired result since $\omega(\tilde{\gamma})=\omega(\vec{\gamma})+1$.
\end{proof}

Using Proposition~\ref{prop:indexquadraticform}, we can give an alternative description of the map $\xi$ introduced above. Indeed, if $\gamma=\sum_i\gamma_i$ is a class in~$\Gamma_\phi$ which is a sum of mutually nonintersecting smooth oriented simple closed curves~$\gamma_i$ that do not meet the zeros of~$\lambda$, then we have 
\begin{equation}
\label{eqn:formulaforxi}
\xi(\gamma)=\prod_i(-1)^{\ind_{\gamma_i}(\lambda)+1}
\end{equation}
by Proposition~\ref{prop:indexquadraticform} and the fact that $\langle\gamma_i,\gamma_j\rangle=0$ for $i\neq j$. This formula provides a means of computing the value of the map~$\xi$ on a given class in~$\Gamma_\phi$.

\begin{proposition}
The map $\xi$ satisfies $\xi(\gamma)=-1$ if $\gamma\in\Gamma_\phi$ is the class of a non-closed saddle connection and $\xi(\gamma)=+1$ if $\gamma$ is the class of a closed saddle connection.
\end{proposition}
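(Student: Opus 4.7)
The plan is to apply formula (\ref{eqn:formulaforxi}) to the natural lift $\widehat{\alpha}=\pi^{-1}(\alpha)\in\Gamma_\phi$, after perturbing it to a sum of smooth oriented simple closed curves avoiding the zeros of $\lambda$. The proof then reduces to a topological analysis of this lift combined with a local computation of the index near each zero of $\lambda$.

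First I would describe the topology of $\widehat\alpha$. In both cases, $\alpha$ minus its endpoints is simply connected, so the restriction of the branched double cover $\pi$ to it is a trivial $2$-fold cover, consisting of two disjoint arcs. Closing these arcs at the branch points upstairs yields two different pictures: in the non-closed case ($p_1\neq p_2$), $\widehat\alpha$ is a single simple closed loop passing through both $\tilde p_1,\tilde p_2\in\Sigma_\phi$; in the closed case, $\widehat\alpha$ is a disjoint union of two loops $\gamma_1,\gamma_2$, each passing through the single branch point $\tilde p$.

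Next I analyze each passage through a branch point using Strebel's normal form $\phi=z\,dz^2$ near a simple zero $p$, which lifts via $w^2=z$ to $\lambda=2w^2\,dw$ on $\Sigma_\phi$. The six prongs of the horizontal foliation at $\tilde p$ sit at $w$-angles $k\pi/3$ for $k=0,\dots,5$, with $\lambda$-positive direction outgoing for even $k$ and incoming for odd $k$; each downstairs prong lifts to one outgoing and one incoming upstairs prong, and the two lifts are opposite (differing by $\pi$). In the non-closed case, the oriented loop $\widehat\alpha$ enters and exits each $\tilde p_i$ along the two lifts of the \emph{single} downstairs prong used by $\alpha$ at $p_i$, so the passage is \emph{straight through} with angular change $\Delta=\pm\pi$. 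In the closed case, each $\gamma_i$ enters $\tilde p$ on an incoming lift of one downstairs prong and exits on an outgoing lift of a \emph{different} downstairs prong, giving a \emph{short-turn} with $\Delta=\pm\pi/3$.

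Finally I compute the index contribution at each passage by replacing the corner at the branch point with a short circular detour around $\tilde p$. On horizontal pieces of the trajectory, $\arg\lambda(\dot\gamma)=0$ is constant and contributes nothing. In the local model, writing $\arg\lambda(\dot\gamma)=2\arg w+\arg\dot\gamma+\mathrm{const}$, the circular arc contributes $3\Delta$ to the change in $\arg\lambda(\dot\gamma)$ (since $\arg w$ and $\arg\dot\gamma$ each change by $\Delta$ along the arc), while smoothing the two kinks where the detour meets the straight trajectory contributes $\mp\pi/2$ each (with sign determined by the direction of the detour). Thus a straight-through passage ($|\Delta|=\pi$) contributes $\pm(3\pi-\pi)=\pm2\pi$, i.e.\ $\pm1$ to the index (odd), while a short-turn passage ($|\Delta|=\pi/3$) contributes $\pm(\pi-\pi)=0$ (even). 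Applying (\ref{eqn:formulaforxi}), in the non-closed case $\ind_{\widehat\alpha}(\lambda)$ is a sum of two odd integers and thus even, giving $\xi(\widehat\alpha)=(-1)^{\mathrm{even}+1}=-1$; in the closed case $\ind_{\gamma_i}(\lambda)=0$ for each $i$, giving $\xi(\widehat\alpha)=(-1)^{0+1}(-1)^{0+1}=+1$. The main obstacle is the careful bookkeeping of the kink corrections in this local computation, and in particular verifying that the claimed parity of the contribution is independent of the choice of detour direction at each branch point.
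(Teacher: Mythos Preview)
Your proposal is correct and follows essentially the same strategy as the paper: represent the class $\widehat\alpha$ by simple closed curves avoiding the zeros of $\lambda$ and then apply formula~\eqref{eqn:formulaforxi}. The paper's proof is extremely terse---it describes the representatives (a single loop whose projection encircles the saddle in the non-closed case; two disjoint $\tau$-interchanged loops in the closed case), cites Figures~30 and~33 of~\cite{GMN}, and declares the index computation ``straightforward.'' Your perturbed lift $\widehat\alpha$ is exactly the paper's representative (e.g.\ in the non-closed case, the detours around $\tilde p_1,\tilde p_2$ project via $z=w^2$ to small full loops around $p_1,p_2$, so the projection is indeed a loop surrounding the saddle). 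What you add is a concrete local computation of the index in the model $\lambda=2w^2\,dw$, which the paper omits; your analysis of the prong combinatorics and of the detour contributions ($3\Delta\mp\pi$, giving odd parity for a straight-through passage and even parity for a short turn, independently of the detour direction) is correct and fills in exactly the step the paper leaves to the reader. The one point to state a bit more carefully is that in the closed case the two perturbed loops can be made genuinely \emph{disjoint} by taking the short detours (since $A_1$ and $A_2$ occupy antipodal $\pi/3$-sectors near $\tilde p$), which is needed for~\eqref{eqn:formulaforxi} to apply.
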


\begin{proof}
If $\gamma\in\Gamma_\phi$ is the class of a non-closed saddle connection, then $\gamma$ can be represented by a single smooth oriented closed curve in~$\Sigma_\phi^\circ$ whose projection to~$S$ is a small loop surrounding the saddle connection. (See Figure~30 of~\cite{GMN}). It is then straightforward to check that $\xi(\gamma)=-1$ using~\eqref{eqn:formulaforxi}. On the other hand, if $\gamma$ is the class of a closed saddle connection, then it can be represented by a union of two disjoint closed loops in~$\Sigma_\phi^\circ$ which are interchanged by the covering involution $\Sigma_\phi^\circ\rightarrow\Sigma_\phi^\circ$. (See Figure~33 of~\cite{GMN}.) We then have $\xi(\gamma)=+1$ by~\eqref{eqn:formulaforxi}.
\end{proof}

\subsection{The twisted torus}

Let $\phi$ be a GMN differential, and consider the associated lattice $\Gamma_\phi\cong\mathbb{Z}^n$ equipped with the skew form $\langle-,-\rangle$. This lattice defines an algebraic torus 
\[
\mathbb{T}_+=\Hom_{\mathbb{Z}}(\Gamma_\phi,\mathbb{C}^*)\cong(\mathbb{C}^*)^n.
\]
We will be interested in a related object 
\[
\mathbb{T}_-=\left\{g:\Gamma_\phi\rightarrow\mathbb{C}^*:g(\gamma_1+\gamma_2)=(-1)^{\langle\gamma_1,\gamma_2\rangle}g(\gamma_1)g(\gamma_2)\right\}
\]
known as the \emph{twisted torus}~\cite{Bridgeland19}.

There is an action of $\mathbb{T}_+$ on~$\mathbb{T}_-$ given by 
\[
(f\cdot g)(\gamma)=f(\gamma)g(\gamma)\in\mathbb{C}^*
\]
for $f\in\mathbb{T}_+$ and $g\in\mathbb{T}_-$, and this action is free and transitive. Thus, after choosing a basepoint in the twisted torus~$\mathbb{T}_-$, we get an identification of~$\mathbb{T}_-$ and~$\mathbb{T}_+$. This identification gives $\mathbb{T}_-$ the structure of an algebraic variety, and this variety structure is independent of the choice of basepoint since the translation maps on~$\mathbb{T}_+$ are algebraic. The coordinate ring $\mathbb{C}[\mathbb{T}_-]$ of the twisted torus is spanned as a vector space by the functions 
\[
x_\gamma:\mathbb{T}_-\rightarrow\mathbb{C}^*, \quad x_\gamma(g)=g(\gamma)\in\mathbb{C}^*,
\]
which we call the \emph{twisted characters}. The twisted torus $\mathbb{T}_-$ has a natural Poisson structure with the Poisson bracket given on the twisted characters by 
\[
\{x_\alpha,x_\beta\}=\langle\alpha,\beta\rangle\cdot x_\alpha\cdot x_\beta
\]
for $\alpha$,~$\beta\in\Gamma_\phi$.

For any GMN differential~$\phi$, we have constructed an associated map $\xi:\Gamma_\phi\rightarrow\mathbb{C}^*$. This map satisfies the relation~\eqref{eqn:basepoint} and can therefore be considered as a point in the twisted torus~$\mathbb{T}_-$. In the following, we will use~$\xi$ as a canonical basepoint to identify~$\mathbb{T}_-$ and~$\mathbb{T}_+$. When there is no possibility of confusion, we will denote the twisted torus simply by~$\mathbb{T}$.

\section{The wall-crossing formula for quadratic differentials}
\label{sec:TheWallCrossingFormulaForQuadraticDifferentials}

In this section, we formulate the Kontsevich-Soibelman wall-crossing formula as a statement about finite-length trajectories of quadratic differentials.

\subsection{The ray diagram}

Fix a GMN differential $\phi$. When we talk about a \emph{ray} in~$\mathbb{C}^*$, we will always mean a subset of the form $\ell=\mathbb{R}_{>0}\cdot e^{i\pi\theta}$ for some $\theta\in\mathbb{R}$. The ray $\ell$ is said to be \emph{active} if there exists a finite-length trajectory for~$\phi$ with phase~$\theta$.

The \emph{ray diagram} associated to the differential~$\phi$ is defined as the union of all active rays in~$\mathbb{C}^*$. An example is illustrated in Figure~\ref{fig:raydiagram}. Note that the phase of a finite-length trajectory is well defined up to addition of an integer, and therefore if $\ell$ is a ray in the ray diagram, then so is $-\ell$.

\begin{figure}[ht]
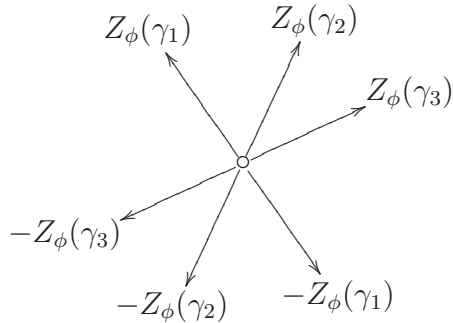

\begin{center}
\[
\xy /l1.5pc/:
{\xypolygon18"A"{~:{(2,2):}~>{}}};
(1,0)*{\circ}="a";
(3,-2.8)*{Z_\phi(\gamma_1)};
(-1,2.8)*{-Z_\phi(\gamma_1)};
(-0.5,-3)*{Z_\phi(\gamma_2)};
(2.5,3)*{-Z_\phi(\gamma_2)};
(-2.5,-1.5)*{Z_\phi(\gamma_3)};
(4.75,1.5)*{-Z_\phi(\gamma_3)};
\xygraph{
"a":"A2",
"a":"A11",
"a":"A9",
"a":"A18",
"a":"A5",
"a":"A14",
}
\endxy
\]
\end{center}
\caption{A ray diagram.\label{fig:raydiagram}}
\end{figure}

Suppose now that the differential $\phi$ is generic. Then the \emph{height} of a ray $\ell\subset\mathbb{C}^*$ is defined to be the number 
\[
H(\ell)=\inf\{|Z_\phi(\gamma)|:\text{$\gamma\in\Gamma_\phi$ such that $Z_\phi(\gamma)\in\ell$ and $\Omega_\phi(\gamma)\neq0$}\},
\]
whenever the set on the right hand side of this expression is non-empty. Otherwise, $\ell$ is considered to have infinite height. It follows from the remarks in~\cite{Bridgeland19}, Section~2.5, that for any $H>0$ one has at most finitely many rays of height $<H$.

\subsection{BPS automorphisms}
\label{sec:BPSautomorphisms}

If $\phi$ is a generic GMN differential, then the \emph{Donaldson-Thomas invariant} for $\gamma\in\Gamma$ is defined by the formula 
\[
DT_\phi(\gamma)=\sum_{\gamma=m\alpha}\frac{1}{m^2}\Omega_\phi(\alpha)\in\mathbb{Q}
\]
where the sum is over all integers $m>0$ such that $\gamma$ is divisible by~$m$ in the lattice~$\Gamma$. By the M\"obius inversion formula, one can express the numbers $\Omega_\phi(\gamma)$ in terms of the $DT_\phi(\gamma)$, and so the BPS and Donaldson-Thomas invariants are equivalent data.

Given any ray $\ell\subset\mathbb{C}^*$, we can consider the associated formal generating series 
\begin{equation}
\label{eqn:generatingseries}
DT_\phi(\ell)=\sum_{Z_\phi(\gamma)\in\ell}DT_\phi(\gamma)\cdot x_\gamma
\end{equation}
for Donaldson-Thomas invariants. We would like to view this generating series as a well defined holomorphic function on the twisted torus $\mathbb{T}$. To do this, we must specify the a suitable domain in~$\mathbb{T}$. For any acute sector $\Delta\subset\mathbb{C}^*$ and real number $R>0$, we consider the open set $U_\Delta(R)\subset\mathbb{T}$ defined as the interior of 
\[
\left\{g\in\mathbb{T}:Z_\phi(\gamma)\in\Delta\text{ and }\Omega_\phi(\gamma)\neq0\implies|g(\gamma)|<\exp(-R\|\gamma\|)\right\}\subset\mathbb{T}.
\]
It is nonempty by~Lemma~B.2 of~\cite{Bridgeland19}. Note that for each $\gamma\in\Gamma_\phi$, the corresponding BPS~invariant satisfies $|\Omega_\phi(\gamma)|\leq2$ by Lemma~5.1 of~\cite{BridgelandSmith}. Hence for sufficiently large $R>0$, we have
\[
\sum_{\gamma\in\Gamma}|\Omega_\phi(\gamma)|\cdot e^{-R|Z_\phi(\gamma)|}<\infty.
\]
This shows that the data $(\Gamma_\phi,Z_\phi,\Omega_\phi)$ define a convergent BPS structure in the sense of~\cite{Bridgeland19}. Hence we have the following.

\begin{proposition}[\cite{Bridgeland19}, Proposition~4.1]
\label{prop:BPSautomorphism}
Let $\Delta\subset\mathbb{C}^*$ be a convex sector. Then for sufficiently large $R>0$, the following statements hold: 
\begin{enumerate}
\item For each ray $\ell\subset\Delta$, the power series~\eqref{eqn:generatingseries} is absolutely convergent on $U_\Delta(R)$ and thus defines a holomorphic function 
\[
DT_\phi(\ell):U_\Delta(R)\rightarrow\mathbb{C}.
\]
\item The time-1 Hamiltonian flow $\exp\{DT_\phi(\ell),-\}$ of the function $DT_\phi(\ell)$ defines a holomorphic embedding 
\[
\mathbf{S}_\phi(\ell):U_\Delta(R)\rightarrow\mathbb{T}.
\]
\item For every $H>0$, the composition
\[
\mathbf{S}_{\phi,<H}(\Delta)=\mathbf{S}_\phi(\ell_1)\circ\mathbf{S}_\phi(\ell_2)\circ\dots\circ\mathbf{S}_\phi(\ell_k)
\]
exists where $\ell_1,\ell_2,\dots,\ell_k\subset\Delta$ are the rays of height $<H$ in the sector $\Delta$ in the clockwise order, and the pointwise limit 
\[
\mathbf{S}_\phi(\Delta)=\lim_{H\rightarrow\infty}\mathbf{S}_{\phi,<H}(\Delta):U_\Delta(R)\rightarrow\mathbb{T}
\]
is a well defined holomorphic embedding.
\end{enumerate}
\end{proposition}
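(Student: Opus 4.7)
My plan is to dispatch the three claims in order, with the real work concentrated in controlling the infinite composition in (3). Throughout I treat the twisted torus, after a choice of basepoint, as $(\mathbb{C}^*)^n$ with coordinates $x_{\gamma_i}$ for a basis of $\Gamma_\phi$.

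For (1), I would use the two-sided bound $|\Omega_\phi(\gamma)| \leq 2$ established in \cite{BridgelandSmith}. The M\"obius-type sum defining $DT_\phi$ then gives $|DT_\phi(\gamma)| \leq 2 \cdot \zeta(2) = \pi^2/3$ uniformly in $\gamma$, while the definition of $U_\Delta(R)$ gives $|x_\gamma(g)| < e^{-R\|\gamma\|}$ whenever $g \in U_\Delta(R)$, $Z_\phi(\gamma) \in \Delta$ and $\Omega_\phi(\gamma) \neq 0$. Combined with the convergence estimate $\sum_\gamma |\Omega_\phi(\gamma)| \, e^{-R|Z_\phi(\gamma)|} < \infty$ already invoked in the text, this dominates the series on compacta and gives holomorphicity by Weierstrass' theorem.

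For (2), I would compute the Hamiltonian flow explicitly. Using the Poisson bracket $\{x_\alpha, x_\beta\} = \langle \alpha, \beta \rangle x_\alpha x_\beta$ and the identity $x_\alpha^m = x_{m\alpha}$ on $\mathbb{T}$ (which holds because the self-intersection of any class is zero), the generating series resums to
\[
DT_\phi(\ell) = \sum_{Z_\phi(\alpha) \in \ell} \Omega_\phi(\alpha) \operatorname{Li}_2(x_\alpha).
\]
By the genericity assumption on $\phi$, all classes with periods on $\ell$ are integer multiples of a single primitive class and hence mutually Poisson-commute, so the $x_\alpha$ for $Z_\phi(\alpha) \in \ell$ are conserved quantities of the flow. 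One then integrates
\[
\frac{d}{dt} \log \Phi_t^*(x_\beta) = \sum_{Z_\phi(\alpha) \in \ell} \Omega_\phi(\alpha) \langle \beta, \alpha \rangle \log(1 - x_\alpha)
\]
and evaluates at $t = 1$ to recover the product formula $\mathbf{S}_\phi(\ell)^*(x_\beta) = x_\beta \prod_\gamma (1 - x_\gamma)^{\Omega_\phi(\gamma) \langle \beta, \gamma \rangle}$. Injectivity follows because the flow at $t = -1$ provides a holomorphic inverse on the image.

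For (3), only finitely many rays have height less than $H$, so $\mathbf{S}_{\phi,<H}(\Delta)$ exists as a composition of finitely many holomorphic embeddings once compatibility of domains is verified. Here my strategy is the following. For a ray $\ell$ with height at least $H_0$ and $g \in U_\Delta(R)$, the product defining $\mathbf{S}_\phi(\ell)$ perturbs each coordinate $x_\beta$ by a factor $1 + O(\|\beta\| \cdot e^{-R H_0})$, with the implicit constant summable in $\gamma$ by the estimate recalled in (1). Enlarging $R$ slightly, I can guarantee that the images of $U_\Delta(R)$ under all the finite compositions $\mathbf{S}_{\phi,<H}(\Delta)$ remain inside a common domain $U_\Delta(R')$ with $R' > 0$, uniformly in $H$. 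Applying the same tail estimate to $\mathbf{S}_{\phi,<H'}(\Delta) \circ \mathbf{S}_{\phi,<H}(\Delta)^{-1}$ for $H' > H$ yields a Cauchy condition in sup norm on compacta; Montel's theorem then produces a holomorphic limit, and Hurwitz's theorem on locally uniform limits of injective holomorphic maps preserves the embedding property.

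The main obstacle will be the estimation step in (3): making quantitative the statement that the tails of the clockwise-ordered composition converge, while simultaneously preserving the invariant domain. The key input is summability of $e^{-R\|\gamma\|}$ over classes with nonzero BPS invariant, forced by $|\Omega_\phi(\gamma)| \leq 2$ and the standard lattice-point growth estimate for $\{\gamma : |Z_\phi(\gamma)| \leq H\}$. Once that summability is in hand, the infinite composition becomes a convergent product of close-to-identity automorphisms and the result follows.
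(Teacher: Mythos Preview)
The paper does not give its own proof of this proposition: it is quoted verbatim from \cite{Bridgeland19}, Proposition~4.1, and the text immediately moves on to use it. So there is nothing in the paper to compare your argument against. The only input the paper supplies is the verification, just before the proposition, that $(\Gamma_\phi,Z_\phi,\Omega_\phi)$ is a convergent BPS structure (from $|\Omega_\phi(\gamma)|\leq 2$), which is precisely the hypothesis of Bridgeland's Proposition~4.1.

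That said, your outline is a reasonable sketch of how the cited result is actually proved. A few comments. In~(2) you are implicitly using genericity (all classes on $\ell$ are multiples of a single primitive one) to integrate the flow in closed form; this is fine in the context of this paper, but note that the explicit product formula you derive is stated separately as \cite{Bridgeland19}, Proposition~B.6 (quoted here as Proposition~\ref{prop:KSautomorphism}), whereas Proposition~4.1 itself is proved in \cite{Bridgeland19} for arbitrary convergent BPS structures by an abstract existence argument for the Hamiltonian flow, without solving it explicitly. In~(1), be careful that the defining inequality for $U_\Delta(R)$ only controls $|x_\gamma|$ for classes with $\Omega_\phi(\gamma)\neq 0$; the terms of the DT series with $\Omega_\phi(\gamma)=0$ but $DT_\phi(\gamma)\neq 0$ are handled because such $\gamma$ are multiples of a class with nonzero BPS invariant, and $x_{m\alpha}=x_\alpha^m$ on the twisted torus. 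In~(3) your strategy (invariant domain plus summable tails, then Hurwitz) is the standard one; the point that needs the most care, as you correctly flag, is arranging $R$ so that the successive images stay inside a common $U_\Delta(R')$ uniformly in $H$.
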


We think of the map $\mathbf{S}_\phi(\Delta)$ defined by Proposition~\ref{prop:BPSautomorphism} as a partially defined automorphism of the twisted torus and call it the \emph{BPS automorphism} associated to the sector~$\Delta$. We note that similar analytic maps have been studied by Kontsevich and Soibelman~\cite{KontsevichSoibelman4}, who described a general framework for studying wall-crossing formulas in an analytic context.

For a generic GMN~differential~$\phi$, the data $(\Gamma_\phi,Z_\phi,\Omega_\phi)$ form a generic, integral, and ray-finite BPS structure in the sense of~\cite{Bridgeland19}. Using this fact, we can give a more explicit description of the maps $\mathbf{S}_\phi(\ell)$.

\begin{proposition}[\cite{Bridgeland19}, Proposition~B.6]
\label{prop:KSautomorphism}
If $\phi$ is a generic GMN differential, then for any ray $\ell\subset\mathbb{C}^*$, the holomorphic embedding $\mathbf{S}_\phi(\ell)$ extends to a birational automorphism of~$\mathbb{T}$, whose action on the twisted characters is given by 
\[
\mathbf{S}_\phi(\ell)^*(x_\beta)=x_\beta\cdot\prod_{Z_\phi(\gamma)\in\ell}(1-x_\gamma)^{\Omega_\phi(\gamma)\cdot\langle\beta,\gamma\rangle}.
\]
\end{proposition}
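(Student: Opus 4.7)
The plan is to integrate the time-1 Hamiltonian flow of $f := DT_\phi(\ell)$ explicitly, exploiting the genericity of $\phi$ to produce first integrals that reduce the flow to a linear ODE.

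Genericity guarantees that every $\gamma \in \Gamma_\phi$ with $Z_\phi(\gamma) \in \ell$ is a positive integer multiple of a single primitive class $\gamma_0$, so all on-ray classes lie in a rank-one saturated sublattice; since $\langle -, - \rangle$ is alternating, the pairing of any two such classes vanishes. The bracket formula $\{x_\alpha, x_\beta\} = \langle \alpha, \beta \rangle\, x_\alpha x_\beta$ then implies that every on-ray character $x_\gamma$ Poisson-commutes with $f$, and also that $x_{k\gamma_0} = x_{\gamma_0}^k$ for each $k \geq 1$. A direct computation gives
\[
\{f, x_\beta\} = g \cdot x_\beta, \qquad g := \sum_{Z_\phi(\gamma) \in \ell} DT_\phi(\gamma)\, \langle \gamma, \beta \rangle\, x_\gamma,
\]
and because $g$ depends only on the preserved on-ray characters, it is itself a first integral of the flow. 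The ODE $\dot{x}_\beta = g \cdot x_\beta$ therefore integrates to $x_\beta \cdot \exp(tg)$, and evaluating at $t = 1$ yields $\mathbf{S}_\phi(\ell)^*(x_\beta) = x_\beta \cdot \exp(g)$.

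To convert the exponential into the claimed product, substitute $DT_\phi(\gamma) = \sum_{\gamma = m\alpha} m^{-2}\, \Omega_\phi(\alpha)$, reindex by $(\alpha, m)$ with $\gamma = m\alpha$, and use both $x_{m\alpha} = x_\alpha^m$ (valid on the ray because the cross-pairings vanish) and $\langle m\alpha, \beta \rangle = m \langle \alpha, \beta \rangle$. The series for $g$ collapses to
\[
\sum_{Z_\phi(\alpha) \in \ell} \Omega_\phi(\alpha)\, \langle \alpha, \beta \rangle \sum_{m \geq 1} \frac{x_\alpha^m}{m} = \sum_{Z_\phi(\alpha) \in \ell} \Omega_\phi(\alpha)\, \langle \beta, \alpha \rangle\, \log(1 - x_\alpha),
\]
and exponentiating produces exactly $x_\beta \prod_{Z_\phi(\gamma) \in \ell}(1 - x_\gamma)^{\Omega_\phi(\gamma)\, \langle \beta, \gamma \rangle}$. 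Since this formula is rational in the twisted characters, the holomorphic embedding $\mathbf{S}_\phi(\ell)$ extends to a birational endomorphism of $\mathbb{T}$; invertibility is automatic, the inverse being the time-$(-1)$ flow given by the same formula with $f$ replaced by $-f$.

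The main obstacle I anticipate is justifying the reindexing and the identity $\sum_m x^m/m = -\log(1 - x)$ analytically rather than just formally: one must verify that on the open set $U_\Delta(R)$ of Proposition~\ref{prop:BPSautomorphism} the double series defining $g$ converges absolutely, so that the rearrangement and exponential computation hold as equalities of holomorphic functions. A secondary point is to read the genericity hypothesis with enough care to ensure that the on-ray classes really lie in a common rank-one saturated sublattice, since this is what makes the cross-pairings vanish and the on-ray characters commute with $f$.
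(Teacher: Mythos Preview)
The paper does not prove this proposition; it is quoted from \cite{Bridgeland19}, Proposition~B.6, without argument. Your proof is correct and follows the standard route taken in that reference: the paper's genericity hypothesis forces all on-ray classes to be positive multiples of a single primitive $\gamma_0$, so the Hamiltonian Poisson-commutes with every on-ray character, the flow equation for $x_\beta$ becomes linear with coefficient $g$ constant along the flow, and the M\"obius-type resummation converts $e^g$ into the stated product.

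Your anticipated obstacles are not genuine difficulties here. Ray-finiteness of the BPS structure (noted just before the proposition) means only finitely many $k\geq 1$ have $\Omega_\phi(k\gamma_0)\neq 0$, so the product in the statement is finite and the outer sum in your reindexing has only finitely many nonzero terms. For each such $k$ the definition of $U_\Delta(R)$ forces $|x_{k\gamma_0}|<1$, so the inner series $\sum_{m\geq 1}x_{k\gamma_0}^m/m$ converges absolutely and the rearrangement is an equality of holomorphic functions rather than a purely formal manipulation. The rank-one claim is immediate from the paper's definition of a generic differential, which is stronger than Bridgeland's notion of a generic BPS structure: it literally says that any two classes whose periods lie on a common real line generate the same rank-one sublattice.
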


\subsection{The wall-crossing formula}

We will now study the behavior of the BPS automorphisms $\mathbf{S}_\phi(\Delta)$ as we vary the differential~$\phi$ in the moduli space $\mathscr{Q}^\pm(\mathbb{S},\mathbb{M})$ for a fixed marked bordered surface $(\mathbb{S},\mathbb{M})$. Note that if $\tau$ is any tagged triangulation of~$(\mathbb{S},\mathbb{M})$, then the canonical double covers $\Sigma_\phi$ define a family of Riemann surfaces over the open set~$\mathcal{C}_\tau\subset\mathscr{Q}(\mathbb{S},\mathbb{M})$. It follows that the lattices $\Gamma_\phi$ form a local system over~$\mathcal{C}_\tau$ with flat connection given by the Gauss-Manin connection. Using this flat connection, we can identify the $\Gamma_\phi$ for $\phi\in\mathcal{C}_\tau$ with a single lattice. In particular, the associated twisted torus $\mathbb{T}$ is independent of $\phi\in\mathcal{C}_\tau$.

In Appendix~\ref{sec:TheMotivicWallCrossingFormula}, we derive the following statement from the motivic wall-crossing formula. It describes implicitly how the BPS invariants $\Omega_\phi(\gamma)$ jump as $\phi$ varies.

\begin{theorem}
\label{thm:firstWCF}
Let $\tau$ be a tagged triangulation of a marked bordered surface $(\mathbb{S},\mathbb{M})$, and let $\Delta$ be a sector contained in the upper half plane. Suppose $\phi_t$,~$t\in[0,1]$, is a path in~$\mathcal{C}_\tau\subset\mathscr{Q}^\pm(\mathbb{S},\mathbb{M})$ with generic endpoints such that the boundary rays of~$\Delta$ are non-active for each differential $\phi_t$. Then 
\[
\mathbf{S}_{\phi_0}(\Delta)=\mathbf{S}_{\phi_1}(\Delta).
\]
\end{theorem}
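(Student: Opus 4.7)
The plan is to deduce the identity from the motivic Kontsevich--Soibelman wall-crossing formula applied to the 3-Calabi-Yau triangulated category attached to $\tau$, as will be developed in the appendix. First I would invoke the Bridgeland--Smith dictionary \cite{BridgelandSmith}: to the tagged triangulation $\tau$ one attaches a quiver with potential whose associated Ginzburg CY3 category $\mathcal{D}_\tau$ carries a distinguished heart $\mathcal{A}_\tau$, and the chamber $\mathcal{C}_\tau$ is identified with an open subset of the stability manifold of $\mathcal{D}_\tau$ so that $Z_\phi$ is the central charge and the refined Donaldson--Thomas invariants of the resulting stability condition agree with the trajectory counts $\Omega_\phi(\gamma)$. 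Since the path $\phi_t$ never leaves $\mathcal{C}_\tau$, it lifts to a family of stability conditions all of which have heart $\mathcal{A}_\tau$; the hypothesis that $\Delta$ lies in the upper half plane then guarantees that every class $\gamma$ with $Z_{\phi_t}(\gamma) \in \Delta$ is effective in $\mathcal{A}_\tau$ for every $t \in [0,1]$.

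Next I would assemble, in a suitable completion of the motivic Hall algebra of $\mathcal{A}_\tau$, the stack function $\mathbb{H}_{\phi_t}(\ell)$ enumerating objects semistable of phase $\arg(\ell)/\pi$ with respect to $\phi_t$, together with the aggregate element $\mathbb{H}_{\phi_t}(\Delta)$ enumerating all objects whose $Z_{\phi_t}$-phase lies in $\Delta$. The Harder--Narasimhan factorization gives the identity
\[
\mathbb{H}_{\phi_t}(\Delta) = \prod_{\ell \subset \Delta} \mathbb{H}_{\phi_t}(\ell),
\]
taken in the clockwise order on rays. Because no semistable object has phase on the boundary of $\Delta$ throughout the deformation, the set of objects contributing to $\mathbb{H}_{\phi_t}(\Delta)$ is locally constant in $t$, so $\mathbb{H}_{\phi_0}(\Delta) = \mathbb{H}_{\phi_1}(\Delta)$. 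Applying the integration map from the motivic Hall algebra into the Poisson algebra of $\mathbb{T}$ converts $\log \mathbb{H}_{\phi_t}(\ell)$ into $DT_{\phi_t}(\ell)$ as in \eqref{eqn:generatingseries}, and therefore each factor $\mathbb{H}_{\phi_t}(\ell)$ into the time-$1$ Hamiltonian flow $\mathbf{S}_{\phi_t}(\ell)$ of Proposition~\ref{prop:BPSautomorphism}. The motivic equality then yields $\mathbf{S}_{\phi_0,<H}(\Delta) = \mathbf{S}_{\phi_1,<H}(\Delta)$ for every height $H$, and letting $H \to \infty$ via part~(3) of Proposition~\ref{prop:BPSautomorphism} gives $\mathbf{S}_{\phi_0}(\Delta) = \mathbf{S}_{\phi_1}(\Delta)$.

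The main obstacle I expect is ensuring that the integration map genuinely delivers the analytic BPS automorphism of Proposition~\ref{prop:BPSautomorphism}, rather than merely a formal series. Because $\Delta$ may contain infinitely many active rays and the invariants $\Omega_{\phi_t}(\gamma)$ can jump at walls interior to $\mathcal{C}_\tau$, the infinite product is a priori only formal; the convergence and ray-finiteness estimates of \cite{Bridgeland19} already built into Proposition~\ref{prop:BPSautomorphism} reduce the analytic claim, at each height $H$, to the motivic identity restricted to the finitely many active rays of height~$<H$, where there is no ambiguity. A secondary issue is that $\phi_t$ may pass through non-generic differentials along the path; this is handled by subdividing $[0,1]$ into finitely many subintervals meeting at generic $t$-values, across each of which only finitely many relevant wall-crossings occur, and then piecing the individual wall-crossing identities together.
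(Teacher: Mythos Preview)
Your plan is essentially the paper's own route: the appendix attaches to $\tau$ the quiver with potential $(Q(\tau),W(\tau))$, identifies $\mathcal{C}_\tau$ with a domain of stability conditions on the abelian category $\mathcal{A}(Q(\tau),W(\tau))$ of nilpotent Jacobian modules via Bridgeland--Smith, proves the motivic wall-crossing identity by a Harder--Narasimhan factorisation in a Ringel--Hall algebra, pushes it through an integration map into a twisted power-series ring, and then takes a quasi-classical limit $q^{1/2}\to 1$ to obtain the formal identity $\prod_{\mu\in D}\mathbf{S}_\mu^{\zeta_0}=\prod_{\mu\in D}\mathbf{S}_\mu^{\zeta_1}$ in $\Aut\mathbb{Z}\llbracket x_i:i\in Q_0\rrbracket$, from which Theorem~\ref{thm:firstWCF} is read off via Proposition~\ref{prop:KSautomorphism}.

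One point to correct: the assertion that the motivic equality yields $\mathbf{S}_{\phi_0,<H}(\Delta)=\mathbf{S}_{\phi_1,<H}(\Delta)$ for every height $H$ is false as written, because the set of active rays of height $<H$ depends on the differential, so the two truncated products are composites of genuinely different birational maps. What the Hall-algebra argument actually delivers is equality of the \emph{full} products as automorphisms of the formal completion; the analytic statement then follows because both $\mathbf{S}_{\phi_0}(\Delta)$ and $\mathbf{S}_{\phi_1}(\Delta)$ are holomorphic on a common domain $U_\Delta(R)$ by Proposition~\ref{prop:BPSautomorphism}(3), and their Taylor expansions at a point of that domain are dictated by the same formal automorphism. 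The paper leaves this last passage implicit, and your discussion of the ``main obstacle'' shows you see where the work lies, but the height-by-height equality should be replaced by this formal-to-analytic step.
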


While this is the version of the wall-crossing formula that follows most readily from the one stated in the references~\cite{Meinhardt, DavisonMeinhardt1}, our main result will ultimately allow us to drop the requirement that the path $\phi_t$ lies entirely in the domain~$\mathcal{C}_\tau$.

\section{Birationality of BPS automorphisms}

In this section, we prove that the BPS automorphisms $\mathbf{S}_\phi(\Delta)$ introduced in the last section extend to birational automorphisms of the twisted torus~$\mathbb{T}$. We do this by relating the BPS automorphisms to Fock-Goncharov coordinates on moduli spaces of flat $\PGL_2(\mathbb{C})$-connections.

\subsection{Fock-Goncharov coordinates}

The birational transformations that we consider in this section arose from the work of Fock and Goncharov on an algebro-geometric approach to higher Teichm\"uller theory~\cite{FockGoncharov1}. For any marked bordered surface $(\mathbb{S},\mathbb{M})$, Fock and Goncharov defined a moduli space denoted $\mathscr{X}(\mathbb{S},\mathbb{M})$. This moduli space parametrizes flat $\PGL_2(\mathbb{C})$-connections on the punctured surface $\mathbb{S}\setminus\mathbb{M}$ with additional data associated to the marked points. We refer to~\cite{Allegretti19b} for the details of this construction. What is important for us is that this moduli space $\mathscr{X}(\mathbb{S},\mathbb{M})$ has an interesting atlas of coordinate charts, which we will now describe.

Fix a marked bordered surface $(\mathbb{S},\mathbb{M})$, and assume this surface admits an ideal triangulation with $n>1$ arcs. By a \emph{signed arc} of~$(\mathbb{S},\mathbb{M})$ we will mean a triple $(T,\epsilon,j)$ where $(T,\epsilon)$ is a signed triangulation of~$(\mathbb{S},\mathbb{M})$ and $j$ is an arc of~$T$. If $\tau$ is a tagged triangulation of~$(\mathbb{S},\mathbb{M})$, let $\mathcal{A}_\tau$ be the set of all signed arcs $(T,\epsilon,j)$ such that $(T,\epsilon)$ is a representative for~$\tau$. We define an equivalence relation on this set $\mathcal{A}_\tau$ as follows. If $(T,\epsilon)$ and $(T,\epsilon')$ are two signed triangulations representing~$\tau$ where the signing $\epsilon'$ differs from $\epsilon$ at a single puncture $p$ of valency one with respect to~$T$, then $p$ is incident to a unique arc $j$ which is the interior edge of a self-folded triangle with encircling edge~$j'$. In this case we say that the tagged arc $(T,\epsilon',j')$ is equivalent to the tagged arc $(T,\epsilon,j)$. This generates the desired equivalence relation on~$\mathcal{A}_\tau$, and we can define a tagged arc of~$\tau$ to be an equivalence class in~$\mathcal{A}_\tau$. We typically represent a tagged triangulation $\tau$ by a fixed signed triangulation $(T,\epsilon)$ and use the same notation for an arc of~$T$ and the corresponding tagged arc of~$\tau$ of~$(\mathbb{S},\mathbb{M})$. For each tagged triangulation~$\tau$, we write $\Gamma_\tau\cong\mathbb{Z}^n$ for a lattice with basis $\{\gamma_j\}$ indexed by tagged arcs~$j$ of~$\tau$.

The lattice $\Gamma_\tau$ comes with a natural skew-form $\langle-,-\rangle:\Gamma_\tau\times\Gamma_\tau\rightarrow\mathbb{Z}$. To define it, let $(T,\epsilon)$ be a signed triangulation representing~$\tau$. If $j$ is any arc of~$T$, we consider the arc $\pi_T(j)$ defined as follows: If $j$ is the interior edge of a self-folded triangle, we let $\pi_T(j)$ be the encircling edge, and we let $\pi_T(j)=j$ otherwise. For each non-self-folded triangle $t$ of~$T$, we define a number denoted $b_{ij}^t$ by the following rules: 
\begin{enumerate}
\item $b_{ij}^t=+1$ if $\pi_T(i)$ and $\pi_T(j)$ are edges of~$t$ with $\pi_T(j)$ following $\pi_T(i)$ in the clockwise order defined by the orientation.
\item $b_{ij}^t=-1$ if the same holds with the counterclockwise order.
\item $b_{ij}^t=0$ otherwise.
\end{enumerate}
Finally, if $i$ and $j$ are tagged arcs of~$\tau$, we define 
\[
\langle\gamma_j,\gamma_i\rangle=\sum_tb_{ij}^t
\]
where the sum is over all non-self-folded triangles in~$T$. We extend this to a form on~$\Gamma_\tau$ by bilinearity.

It follows from the work of Fock and Goncharov~\cite{FockGoncharov1} that there exists a birational map 
\[
X_\tau:\mathscr{X}(\mathbb{S},\mathbb{M})\dashrightarrow\mathbb{T}_\tau\coloneqq\Hom_{\mathbb{Z}}(\Gamma_\tau,\mathbb{C}^*)
\]
from the moduli space described above to an algebraic torus $\mathbb{T}_\tau\cong(\mathbb{C}^*)^n$. The components of this map corresponding to tagged arcs of~$\tau$ are called \emph{Fock-Goncharov coordinates}. We refer to~\cite{Allegretti19b} for the detailed construction of this map~$X_\tau$. Here we are mainly interested in the relationship between the Fock-Goncharov coordinates associated to different tagged triangulations.

Suppose $T$ is an ideal triangulation of~$(\mathbb{S},\mathbb{M})$ and $k$ is an arc of~$T$. We say that an ideal triangulation $T'$ is obtained from~$T$ by a \emph{flip} of~$k$ if $T'\neq T$ and there is an arc~$k'$ of~$T'$ such that $T\setminus\{k\}=T'\setminus\{k'\}$. Similarly, suppose $\tau$ is a tagged triangulation and $k$ is a tagged arc of~$\tau$. In this case, we say that a tagged triangulation $\tau'$ is obtained from~$\tau$ by a flip of~$k$ if $\tau$ and $\tau'$ are represented by signed triangulations $(T,\epsilon)$ and $(T',\epsilon')$, respectively, and $T'$ is obtained from $T$ by a flip of~$k$. Note that while we cannot flip a self-folded edge in an ordinary ideal triangulation, we can flip any tagged arc of a tagged triangulation.

Let $\tau$ be a tagged triangulation and $\tau'$ the tagged triangulation obtained from~$\tau$ by flipping the tagged arc~$k$. Then the transition map $\mu_k=X_{\tau'}\circ X_\tau^{-1}$ is a birational map 
\[
\mu_k:\mathbb{T}_\tau\dashrightarrow\mathbb{T}_{\tau'}
\]
which can be described explicitly. In the following proposition, we will write $X_\gamma(f)\coloneqq f(\gamma)$ for $\gamma\in\Gamma_\tau$ and $f\in\mathbb{T}_\tau$, and write $X_\gamma'(f)\coloneqq f(\gamma)$ for $\gamma\in\Gamma_{\tau'}$ and $f\in\mathbb{T}_{\tau'}$. We will use the same notation for a tagged arc in~$\tau$ and the corresponding tagged arc in the flipped triangulation~$\tau'$.

\begin{proposition}[\cite{Allegretti19b}, Section~4 and \cite{FockGoncharov2}, Section~2.1]
\label{prop:clustertransformation}
The rational map $\mu_k$ can be written as a composition $\mu_k=\iota_k\circ\kappa_k$ where 
\begin{enumerate}
\item $\iota_k$ is the isomorphism $\mathbb{T}_\tau\rightarrow\mathbb{T}_{\tau'}$ given by 
\[
\iota_k^*(X_{\gamma_j}')=
\begin{cases}
X_{\gamma_k}^{-1} & \text{if $j=k$} \\
X_{\gamma_j}X_{\gamma_k}^{[\langle\gamma_k,\gamma_j\rangle]_+} & \text{if $j\neq k$}
\end{cases}
\]
where $[n]_+\coloneqq\max(n,0)$.
\item $\kappa_k$ is the birational automorphism $\mathbb{T}_\tau\dashrightarrow\mathbb{T}_\tau$ given by 
\[
\kappa_k^*(X_\gamma)=X_\gamma\cdot(1+X_{\gamma_k})^{\langle\gamma,\gamma_k\rangle}.
\]
\end{enumerate}
\end{proposition}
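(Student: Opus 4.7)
My plan is to treat the proposition as an algebraic identity: given the cluster $\mathscr{X}$-mutation formula for $\mu_k^*$ recalled in \cite{Allegretti19b, FockGoncharov2}, I would verify directly that it agrees with the composition $\kappa_k^* \circ \iota_k^*$ on the multiplicative generators $X_{\gamma_j}'$ of $\mathbb{C}[\mathbb{T}_{\tau'}]$, with $j$ running over tagged arcs of $\tau'$ (using the convention that tagged arcs of $\tau'$ other than $k$ are identified with those of $\tau$). Since both $\mu_k^*$ and $\kappa_k^* \iota_k^*$ are $\mathbb{C}$-algebra homomorphisms, checking on characters suffices, and the argument naturally splits into the cases $j = k$ and $j \neq k$.

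For the first case, $\iota_k^*(X_{\gamma_k}') = X_{\gamma_k}^{-1}$. Applying $\kappa_k^*$ would produce the factor $(1 + X_{\gamma_k})^{-\langle\gamma_k,\gamma_k\rangle}$, but this exponent vanishes because the pairing $\langle-,-\rangle$ on $\Gamma_\tau$ is skew, so in fact $\kappa_k^* \iota_k^*(X_{\gamma_k}') = X_{\gamma_k}^{-1}$. This matches the mutation formula $\mu_k^*(X_{\gamma_k}') = X_{\gamma_k}^{-1}$.

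For the second case, I compute
\[
\kappa_k^*\bigl(\iota_k^*(X_{\gamma_j}')\bigr) = \kappa_k^*\bigl(X_{\gamma_j} X_{\gamma_k}^{[\langle\gamma_k,\gamma_j\rangle]_+}\bigr) = X_{\gamma_j}(1+X_{\gamma_k})^{\langle\gamma_j,\gamma_k\rangle} \cdot X_{\gamma_k}^{[\langle\gamma_k,\gamma_j\rangle]_+},
\]
where the contribution of $\kappa_k^*$ on the $X_{\gamma_k}$ factor drops out again by $\langle\gamma_k,\gamma_k\rangle = 0$. On the other hand, the cluster $\mathscr{X}$-mutation formula gives
\[
\mu_k^*(X_{\gamma_j}') = X_{\gamma_j}\bigl(1+X_{\gamma_k}^{-\operatorname{sgn}\langle\gamma_k,\gamma_j\rangle}\bigr)^{\langle\gamma_j,\gamma_k\rangle}.
\]
To reconcile these I would split on the sign of $\epsilon := \langle\gamma_k,\gamma_j\rangle$. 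When $\epsilon \leq 0$ one has $[\epsilon]_+ = 0$ and the sign in the exponent of $X_{\gamma_k}$ in the binomial is non-negative, so the two expressions agree verbatim. When $\epsilon > 0$, the identity $(1 + X_{\gamma_k}^{-1}) = X_{\gamma_k}^{-1}(1 + X_{\gamma_k})$ raised to the $\langle\gamma_j,\gamma_k\rangle = -\epsilon$ power extracts exactly a factor of $X_{\gamma_k}^{\epsilon} = X_{\gamma_k}^{[\epsilon]_+}$, matching the monomial contribution of $\iota_k^*$ and leaving the correct power of $(1 + X_{\gamma_k})$.

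The only real obstacle is pinning down conventions: the skew-form $\langle-,-\rangle$ built from the $b_{ij}^t$ above, and the sign convention used in the $\mathscr{X}$-mutation formulas of \cite{FockGoncharov2}, must be lined up (in particular, one must verify that $\langle\gamma_k, \gamma_j\rangle$ here plays the role of the exchange-matrix entry $\epsilon_{kj}$ in loc.~cit., with the correct sign). After that, the verification on generators is a one-line calculation in each sign regime, and equality of the rational maps $\mu_k$ and $\iota_k \circ \kappa_k$ follows because their pullbacks agree on a generating set of $\mathbb{C}[\mathbb{T}_{\tau'}]$.
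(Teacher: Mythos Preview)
Your proposal is correct. The paper does not actually supply a proof of this proposition: it is stated with attribution to \cite{Allegretti19b}, Section~4 and \cite{FockGoncharov2}, Section~2.1, and then used without further argument. What you have written is precisely the direct verification one would carry out to check the decomposition against the standard $\mathscr{X}$-mutation formula, and your case split on the sign of $\langle\gamma_k,\gamma_j\rangle$ together with the identity $(1+X_{\gamma_k}^{-1})^{-\epsilon}=X_{\gamma_k}^{\epsilon}(1+X_{\gamma_k})^{-\epsilon}$ is exactly the point. Your caveat about conventions is also well placed: in the paper the pairing is defined by $\langle\gamma_j,\gamma_i\rangle=\sum_t b_{ij}^t$, so one must check that $\langle\gamma_k,\gamma_j\rangle$ matches the exchange-matrix entry used in the cited mutation formula with the correct sign; once that bookkeeping is done the rest is, as you say, a one-line check in each sign regime.
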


If $(\mathbb{S},\mathbb{M})$ is not a closed surface with exactly one puncture, then any two tagged triangulations of~$(\mathbb{S},\mathbb{M})$ are related by a sequence of flips (\cite{FST}, Proposition~7.10). Thus Proposition~\ref{prop:clustertransformation} can be used to calculate the transition map $X_{\tau'}\circ X_\tau^{-1}$ for any tagged triangulations~$\tau$ and~$\tau'$ in this case.

\subsection{The main result}
\label{sec:TheMainResult}

Consider a complete, generic GMN differential~$\phi$. We further assume that the associated marked bordered surface~$(\mathbb{S},\mathbb{M})$ is not a closed surface with exactly one puncture. If $\Delta\subset\mathbb{C}^*$ is a convex sector whose boundary rays are non-active with phases~$\theta_1$ and~$\theta_2$, then each rotated differential $\phi_i=e^{-2i\theta_i}\cdot\phi$ is complete and saddle-free and hence determines a tagged WKB~triangulation~$\tau_i$. As we have seen, the Fock-Goncharov coordinates provide a birational map 
\begin{equation}
\label{eqn:twocharts}
X_{\tau_i}:\mathscr{X}(\mathbb{S},\mathbb{M})\dashrightarrow\Hom_{\mathbb{Z}}(\Gamma_{\tau_i},\mathbb{C}^*)
\end{equation}
where $\Gamma_{\tau_i}\cong\mathbb{Z}^n$ is the lattice spanned by the set of tagged arcs of~$\tau_i$. By Lemma~10.3 of~\cite{BridgelandSmith}, the lattice $\Gamma_{\tau_i}$ is canonically isomorphic to~$\Gamma_{\phi_i}$. There is a family of Riemann surfaces over~$\mathbb{R}$ where the fiber over $\theta\in\mathbb{R}$ is the canonical double cover for the rotated differential $\phi_\theta=e^{-2i\theta}\cdot\phi$. The homology groups of these Riemann surfaces form a local system of lattices over~$\mathbb{R}$ with flat connection given by the Gauss-Manin connection. Using this flat connection, we can identify the lattices $\Gamma_{\phi_i}$ with $\Gamma_\phi$. We can therefore think of the maps~\eqref{eqn:twocharts} as taking values in the torus $\mathbb{T}_+=\Hom_{\mathbb{Z}}(\Gamma_\phi,\mathbb{C}^*)$. The following is the main result of this paper.

\begin{theorem}
\label{thm:main}
Let $\phi$ be a complete, generic GMN differential, and assume that the associated marked bordered surface~$(\mathbb{S},\mathbb{M})$ is not a closed surface with exactly one puncture. Let $\Delta\subset\mathbb{C}^*$ be a convex sector whose boundary rays are non-active with phases~$\theta_1$ and~$\theta_2$. Then 
\begin{enumerate}
\item There is a distinguished basepoint $\xi\in\mathbb{T}_-$ such that $\xi(\gamma)=-1$ if $\gamma\in\Gamma_\phi$ is the class of a non-closed saddle connection and $\xi(\gamma)=+1$ if $\gamma$ is the class of a closed saddle connection.
\item $\mathbf{S}_\phi(\Delta)$ extends to a birational automorphism of~$\mathbb{T}_-$. If we use the basepoint $\xi$ to identify~$\mathbb{T}_-$ with~$\mathbb{T}_+$, then this is the birational automorphism of~$\mathbb{T}_+$ relating the maps 
\begin{equation}
\label{eqn:sametarget}
X_{\tau_i}:\mathscr{X}(\mathbb{S},\mathbb{M})\dashrightarrow\mathbb{T}_+
\end{equation}
where $\tau_i$ is the tagged WKB triangulation determined by the rotated differential $\phi_i=e^{-2i\theta_i}\cdot\phi$.
\end{enumerate}
\end{theorem}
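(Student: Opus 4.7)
Part (1) was, in effect, already established in Section~\ref{sec:SpinStructuresAndTheTwistedTorus}: the spin structure on $\Sigma_\phi^\circ$ defined by the square-root bundle~$L$ in~\eqref{eqn:thetacharacteristic} produces a $\mathbb{Z}_2$-quadratic form $q_\omega$ on~$H_1(\Sigma_\phi^\circ;\mathbb{Z}_2)$, and $\xi(\gamma)=(-1)^{q_\omega(\gamma)}$ satisfies the twist relation~\eqref{eqn:basepoint}. The explicit values $\xi(\gamma)=\mp 1$ for non-closed and closed saddle connections follow from the index formula~\eqref{eqn:formulaforxi} applied to the representative loops described in Figures~30 and~33 of~\cite{GMN}. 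In the proof I would therefore simply reassemble these pieces into a single statement.

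For part~(2), my plan is to degenerate $\mathbf{S}_\phi(\Delta)$ into a finite product of single-flip transformations. The key family is $\phi_\vartheta=e^{-2i\vartheta}\phi$ for $\vartheta\in[\theta_2,\theta_1]$; its zeros and poles are independent of $\vartheta$, so the Gauss-Manin connection canonically identifies all $\Gamma_{\phi_\vartheta}$ with $\Gamma_\phi$, and the Fock-Goncharov targets in~\eqref{eqn:twocharts} with $\mathbb{T}_+$. Since the boundary rays of $\Delta$ are non-active and $\phi$ is generic, I pick a subdivision $\theta_2=\vartheta_0<\vartheta_1<\dots<\vartheta_N=\theta_1$ such that each $\phi_{\vartheta_j}$ is complete and saddle-free and each open subinterval $(\vartheta_j,\vartheta_{j+1})$ contains the phase of exactly one active ray $\ell_j$ for~$\phi$ in~$\Delta$. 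Let $\tau^{(j)}$ be the tagged WKB~triangulation of $\phi_{\vartheta_j}$. Invoking the analysis of the chamber structure of saddle-free differentials in Section~10 of~\cite{BridgelandSmith}, each pair $\tau^{(j)}$ and $\tau^{(j+1)}$ differs by the flip of a single tagged arc $k_j$, and under the identification $\Gamma_{\tau^{(j)}}\cong\Gamma_\phi$ of Lemma~10.3 of~\cite{BridgelandSmith} the basis element $\gamma_{k_j}$ agrees with the class $\gamma_j\in\Gamma_\phi$ of the saddle connection realized at the wall.

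The core computation is then a direct comparison. By Proposition~\ref{prop:KSautomorphism},
\[
\mathbf{S}_\phi(\ell_j)^*(x_\beta)=x_\beta\cdot(1-x_{\gamma_j})^{\langle\beta,\gamma_j\rangle}
\]
(using $\Omega_\phi(\gamma_j)=1$ for a saddle connection), while Proposition~\ref{prop:clustertransformation} writes the cluster transition $\mu_{k_j}=\iota_{k_j}\circ\kappa_{k_j}$ with
\[
\kappa_{k_j}^*(X_\beta)=X_\beta\cdot(1+X_{\gamma_{k_j}})^{\langle\beta,\gamma_{k_j}\rangle}.
\]
Identifying $\mathbb{T}_-$ with $\mathbb{T}_+$ by $x_\gamma\mapsto\xi(\gamma)\,X_\gamma$ and using $\xi(\gamma_j)=-1$, the factor $(1-x_{\gamma_j})$ becomes $(1+X_{\gamma_j})$, so $\mathbf{S}_\phi(\ell_j)$ coincides with $\kappa_{k_j}$ through $\xi$. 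The isomorphisms $\iota_{k_j}$ encode the relabelling of arcs and therefore compose telescopically along $j=0,\dots,N-1$ to the Gauss-Manin identification $\Gamma_{\tau_1}\cong\Gamma_{\tau_2}$ already built into our setup. Composing gives $X_{\tau_2}\circ X_{\tau_1}^{-1}=\kappa_{k_{N-1}}\circ\dots\circ\kappa_{k_0}=\mathbf{S}_\phi(\ell_{N-1})\circ\dots\circ\mathbf{S}_\phi(\ell_0)$, which by Proposition~\ref{prop:BPSautomorphism}(3) agrees with $\mathbf{S}_\phi(\Delta)$ on the analytic open set where the latter is defined; this identifies $\mathbf{S}_\phi(\Delta)$ with a birational automorphism of~$\mathbb{T}_+$, proving both assertions.

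The main obstacle is the possible presence of \emph{non-simple} walls in the path $\vartheta\mapsto\phi_\vartheta$: either a nondegenerate cylinder crosses $\Delta$, or (despite the genericity of $\phi$) several independent saddle connections share a phase because non-primitive multiples $n\gamma$ may lie on the same ray with $\Omega_\phi(n\gamma)\neq0$. At such moments the WKB triangulation does not change by a single flip, and $\mathbf{S}_\phi(\ell)$ is an infinite product. My strategy here is to deform within the moduli space: choose a small perturbation of $\phi$ (holding the tagged triangulations $\tau_1,\tau_2$ fixed on either side of the wall) that splits the bad ray into finitely many simple rays each realized by a single saddle-free flip. Theorem~\ref{thm:firstWCF}, applied separately in each of the chambers $\mathcal{C}_{\tau^{(j)}}$ into which the perturbed rotation path is subdivided, guarantees that $\mathbf{S}_\phi(\Delta)$ is unchanged by the perturbation, so the simple-flip computation above then applies. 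Verifying that the perturbation can be chosen to stay generic and that the finite product of cluster mutations obtained on the perturbed side agrees with the limiting BPS automorphism on the original ray is the delicate technical point of the proof.
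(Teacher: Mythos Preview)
Your treatment of part~(1) matches the paper's, and your single-flip computation in the simple case is exactly the content of the paper's Lemma~\ref{lem:F1}. The overall architecture you propose --- decompose the sector, reduce each piece to a flip, and use the wall-crossing formula to handle the non-simple pieces --- is also the paper's architecture. The difference is in how the reduction to simple pieces is executed, and that is where your argument has a genuine gap.

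First, your subdivision step assumes there are only finitely many active rays in~$\Delta$. This need not hold: active rays can accumulate (for instance near phases supporting ring domains), so you cannot in general choose $\vartheta_0<\dots<\vartheta_N$ so that each subinterval contains exactly one active ray. The paper avoids this by arguing locally in phase and then covering $[\theta_1,\theta_2]$ by compactness with finitely many \emph{good} small sectors, not by enumerating active rays.

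Second, and more seriously, your perturbation step --- ``perturb $\phi$ so that the bad ray splits into finitely many simple rays each realized by a single flip'' --- is exactly the hard part, and it is not justified as stated. A small generic perturbation of~$\phi$ does not in general reduce the number of horizontal finite-length trajectories of a rotated differential to one; it merely moves you inside the same stratum $F_p$. What is needed is a mechanism to escape $F_p$ into $F_{p-1}$ while keeping the BPS automorphism unchanged. The paper supplies this via the stratification of the locus of complete differentials by the sets $F_p$ (Section~5 of~\cite{BridgelandSmith}), together with the ``walls have ends'' property (Proposition~\ref{prop:wallshaveends}): each component of $F_p$ for $p>2$ has a boundary point near which $B_{p-1}$ is connected, so one can slide along~$F_p$ (using Theorem~\ref{thm:firstWCF} to keep $\mathbf{S}_\phi(\Delta)$ constant) to a place where the wall can be crossed inside $B_{p-1}$. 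This sets up an induction on~$p$ (Lemma~\ref{lem:Fp}) whose base cases are your $F_0$ and $F_2$ computations. Your proposal gestures at this (``deform within the moduli space \dots\ Theorem~\ref{thm:firstWCF} guarantees $\mathbf{S}_\phi(\Delta)$ is unchanged''), but without the stratification and the walls-have-ends input there is no reason the perturbed situation is any simpler than the original one.

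A smaller point: your application of Theorem~\ref{thm:firstWCF} ``separately in each chamber $\mathcal{C}_{\tau^{(j)}}$'' is not quite how that theorem is set up --- it requires the path of differentials to lie in a single $\mathcal{C}_\tau$ and the sector to sit in the upper half plane. The paper handles this by first rotating so that the relevant small sector lies in the upper half plane and the (unrotated) path sits in a fixed chamber, then undoing the rotation at the level of BPS automorphisms.
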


Theorem~\ref{thm:main} gives a way of computing the BPS automorphism $\mathbf{S}_\phi(\Delta)$ for a general sector $\Delta\subset\mathbb{C}^*$ and implies Theorem~\ref{thm:introWCF} from the introduction. Note that we have already proved part~(1) in Section~\ref{sec:SpinStructuresAndTheTwistedTorus}. We will devote the remainder of the present section to the proof of part~(2).

\subsection{Stratification of moduli spaces}

We begin by describing some topological aspects of the moduli space of GMN differentials. Given a complete GMN differential $\phi\in\mathscr{Q}^\pm(\mathbb{S},\mathbb{M})$, let us write $r_\phi$ for the number of recurrent trajectories which approach a zero at one end and $s_\phi$ for the number of horizontal saddle connections. Following Section~5.2 of~\cite{BridgelandSmith}, we consider for every integer $p\geq0$ the subset 
\[
B_p=\{\phi\in\mathscr{Q}^\pm(\mathbb{S},\mathbb{M}):\text{$\phi$ is complete and $r_\phi+2s_\phi\leq p$}\}.
\]
Thus $B_0=B_1$ is the set of complete saddle-free differentials, and $B_2$ is the set of complete differentials having at most one horizontal saddle connection. As in~\cite{BridgelandSmith}, we define $F_0=B_0$ and $F_p=B_p\setminus B_{p-1}$ for $p\geq1$. Each of these sets $F_p$ is locally closed in $\mathscr{Q}^\pm(\mathbb{S},\mathbb{M})$ and the union $\coprod_{p=0}^\infty F_p\subset\mathscr{Q}^\pm(\mathbb{S},\mathbb{M})$ is the set of all complete GMN~differentials. Thus, the $F_p$ provide a stratification of this set.

If $p\geq2$, then by Proposition~5.5 of~\cite{BridgelandSmith}, the set $F_p$ has codimension one in~$B_p$, and hence one can think of~$F_p$ as a wall in~$B_p$, potentially separating two connected components of~$B_{p-1}$. A crucial feature of the stratification by the sets $F_p$ is the ``walls have ends'' property proved in~\cite{BridgelandSmith}, Section~5.6:

\begin{proposition}
\label{prop:wallshaveends}
Assume $(\mathbb{S},\mathbb{M})$ is not a closed surface with exactly one puncture, and take $p>2$. Let $C$ be any connected component of $F_p\subset\mathscr{Q}^\pm(\mathbb{S},\mathbb{M})$. Then there is a point $\phi$ in the closure of~$C$ such that for any neighborhood $\phi\in U\subset B_p$, we can find a smaller neighborhood $\phi\in V\subset U$ such that $V\cap B_{p-1}$ is connected.
\end{proposition}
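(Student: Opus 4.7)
The plan is to locate a point $\phi\in\overline{C}$ at which the horizontal incidences with zeros — that is, horizontal saddle connections together with the endpoints of the recurrent trajectories that approach zeros — degenerate further, so that while the stratum $F_p$ is a real hypersurface inside $B_p$, near $\phi$ it is contained in a piece of real codimension at least two in the ambient moduli space $\mathscr{Q}^\pm(\mathbb{S},\mathbb{M})$. Granting this, a small enough neighborhood $V$ of $\phi$ in $B_p$ will meet $B_{p-1}$ in the complement of a real codimension $\geq 2$ subset of a small complex ball, which is connected.

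First I would pick any $\psi\in C$ and analyze its horizontal configuration. Since $r_\psi+2s_\psi=p>2$, there are at least three horizontal incidences with zeros, counting each saddle connection twice. Working in period coordinates on $\mathscr{Q}^\pm(\mathbb{S},\mathbb{M})$, each horizontal saddle connection of class $\gamma\in\Gamma_\psi$ locally cuts out a real hypersurface via the condition $\Im Z_\phi(\gamma)=0$, and the stratum $F_p$ through $\psi$ is locally described as a connected component of the appropriate transverse intersection of such hypersurfaces (suitably adjusted to account for the recurrent ends).

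Next I would deform $\psi$ inside $\overline{C}$ to a limit point $\phi$ at which two of these incidences collide: for example, two horizontal saddle connections could come to share a common zero of the differential, or equivalently at the level of periods two of their classes $\gamma_i,\gamma_j$ could become $\mathbb{R}$-proportional, so that the two real conditions $\Im Z_\phi(\gamma_i)=\Im Z_\phi(\gamma_j)=0$ collapse into one. From the local structure of the horizontal-strip decomposition near $\phi$, one then verifies that the wall $F_p$ in a sufficiently small neighborhood $V$ is contained in a real-analytic subset of codimension at least two, so that $V\cap B_{p-1}$, being the complement of this subset in a small complex ball, is connected.

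The main obstacle is handling the recurrent-trajectory case. Recurrent trajectories are not cut out by a single holomorphic period condition, so producing the required collision when $r_\psi$ is large calls for Teichm\"uller-dynamical input — essentially taking limits of minimal interval-exchange transformations in the sense of Masur's criterion — rather than the clean period-coordinate argument that works for saddle connections. Once a suitable degeneration point $\phi\in\overline{C}$ has been produced, the topological conclusion that $V\cap B_{p-1}$ is connected for small $V$ is routine.
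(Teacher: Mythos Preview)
The paper does not prove this proposition itself; it is quoted from Bridgeland--Smith, Section~5.6. Your proposal has a genuine gap at the decisive step.

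You correctly state the goal: a point $\phi\in\overline{C}\cap B_p$ near which $F_p$ is contained in something of real codimension at least two, so that its complement in a small ball is connected. But your mechanism for producing $\phi$ runs in the wrong direction. You propose deforming until ``the two real conditions $\Im Z_\phi(\gamma_i)=\Im Z_\phi(\gamma_j)=0$ collapse into one.'' When two independent linear conditions become dependent, the locus they jointly define passes from codimension two to codimension one; the wall becomes \emph{larger}, and its complement harder to connect, not easier. Moreover, the classes $\gamma_i,\gamma_j\in\Gamma_\phi$ are fixed lattice elements, identified along the deformation by the Gauss--Manin connection; they cannot ``become $\mathbb{R}$-proportional'' as $\phi$ varies --- either they are proportional for all nearby $\phi$ or for none. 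Finally, ``two horizontal saddle connections sharing a common zero'' is not a degeneration at all: saddle connections always terminate at zeros, two of them meeting at a zero is the generic picture, and this has nothing to do with proportionality of homology classes.

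The Bridgeland--Smith argument runs in the opposite direction to your collapse. Rather than making period conditions dependent, one looks for a boundary point of $C$ inside $B_p$ where an \emph{additional} constraint appears or an existing saddle connection ceases to persist --- for instance by shrinking a horizontal period to zero, or by acquiring a further horizontal saddle connection that breaks an old one --- so that near $\phi$ the codimension-one wall $C$ occupies only part of its ambient hyperplane and does not separate a small neighbourhood. The hypothesis that $(\mathbb{S},\mathbb{M})$ is not a once-punctured closed surface enters precisely to guarantee that such boundary points exist, by ensuring enough independent period coordinates.
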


\subsection{Completing the proof}

In what follows, we will assume $(\mathbb{S},\mathbb{M})$ is not a closed surface with exactly one puncture, and we take $\phi\in\mathscr{Q}^\pm(\mathbb{S},\mathbb{M})$. We will say that the differential $\phi$ is \emph{good} if there exists $\varepsilon>0$ such that if $-\varepsilon<\theta_1<0<\theta_2<\varepsilon$ and the rotated differentials $e^{-2i\theta_j}\cdot\phi$ are saddle-free, then the conclusion of part~(2) of Theorem~\ref{thm:main} holds for the sector $\Delta\subset\mathbb{C}^*$ having boundary rays $\mathbb{R}_{>0}\cdot e^{i\theta_j}$.

\begin{lemma}
\label{lem:F0}
If $\phi$ is generic differential in~$F_0$ then $\phi$ is good.
\end{lemma}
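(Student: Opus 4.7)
The plan is to show that for a suitably chosen $\varepsilon>0$, both sides of the identity in part~(2) of Theorem~\ref{thm:main} reduce to the identity automorphism of the twisted torus.

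I would begin by letting $\tau_0$ denote the tagged WKB triangulation associated with $\phi$. Since the set $F_0\subset\mathscr{Q}^\pm(\mathbb{S},\mathbb{M})$ of complete saddle-free differentials decomposes as the disjoint union $\coprod_\tau\mathcal{C}_\tau$ of open subsets $\mathcal{C}_\tau$, the tagged WKB triangulation is locally constant on $F_0$. I would fix an open neighborhood $V\subset\mathcal{C}_{\tau_0}$ of $\phi$ in $\mathscr{Q}^\pm(\mathbb{S},\mathbb{M})$, and use continuity of the $\mathbb{C}^*$-action $\theta\mapsto e^{-2i\theta}\phi$ to choose $\varepsilon>0$ small enough that $e^{-2i\theta}\phi\in V$ for every $\theta$ with $|\theta|<\varepsilon$.

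With $\varepsilon$ fixed in this way, I would verify the conclusion for any admissible pair $\theta_1$, $\theta_2$. First, both rotated differentials $e^{-2i\theta_j}\phi$ lie in $V\subset\mathcal{C}_{\tau_0}$, so the tagged WKB triangulations $\tau_1$ and $\tau_2$ both equal $\tau_0$. Hence the Fock-Goncharov charts $X_{\tau_1}$ and $X_{\tau_2}$ literally coincide as birational maps into $\mathbb{T}_+$, and the birational automorphism of $\mathbb{T}_+$ that relates them is the identity. Second, the whole family $\{e^{-2i\theta}\phi:|\theta|<\varepsilon\}$ lies in $V\subset F_0$, so no differential in the family has a horizontal saddle trajectory; equivalently, $\phi$ admits no saddle connection whose phase lies in $(-\varepsilon,\varepsilon)$. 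Since the two boundary components of any non-degenerate cylinder consist of saddle trajectories of the cylinder's own phase, $\phi$ also has no non-degenerate cylinder of phase in this interval, and therefore $\Omega_\phi(\gamma)=0$ for every class $\gamma\in\Gamma_\phi$ with $Z_\phi(\gamma)$ in the sector~$\Delta$. Applying the explicit formula of Proposition~\ref{prop:KSautomorphism} to each ray $\ell\subset\Delta$ gives $\mathbf{S}_\phi(\ell)=\mathrm{id}$, and passing to the pointwise limit as in Proposition~\ref{prop:BPSautomorphism}(3) then yields $\mathbf{S}_\phi(\Delta)=\mathrm{id}$.

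Both sides of the identity in part~(2) of Theorem~\ref{thm:main} are thus equal to the identity on $\mathbb{T}_-\cong\mathbb{T}_+$, which completes the argument. I do not anticipate a serious obstacle in this base case: the essential input is the local constancy of the tagged WKB triangulation on $F_0$, after which the rest of the proof is forced. The genuine difficulty in the overall proof of Theorem~\ref{thm:main} instead arises in handling the higher strata $F_p$ with $p\geq 2$, where crossing a wall produces nontrivial cluster transformations that must be matched to nontrivial BPS automorphisms.
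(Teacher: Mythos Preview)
Your argument is correct and follows essentially the same approach as the paper: use the openness of $\mathcal{C}_{\tau_0}$ to find $\varepsilon>0$ with all nearby rotations remaining saddle-free with the same tagged WKB triangulation, so that both the Fock--Goncharov transition map and the BPS automorphism are the identity. The paper phrases the second half slightly more tersely (``$\Delta$ contains no active rays''), whereas you spell out why $\Omega_\phi(\gamma)=0$ via the boundary structure of non-degenerate cylinders, but this is the same content.
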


\begin{proof}
If $\phi\in F_0$ then $\phi$ is a saddle-free differential and we can find $\varepsilon>0$ such that if $-\varepsilon<\theta_1<0<\theta_2<\varepsilon$ then the rotated differentials $e^{-2i\theta_j}\cdot\phi$ are saddle-free and have the same associated tagged triangulation as~$\phi$. Therefore the transformation relating the maps~\eqref{eqn:sametarget} is the identity. If $\Delta\subset\mathbb{C}^*$ is the sector having boundary rays $\mathbb{R}_{>0}\cdot e^{i\theta_j}$, then $\Delta$ contains no active rays, so for generic~$\phi$, the BPS~automorphism $\mathbf{S}_\phi(\Delta)$ is defined and equals the identity. Hence $\phi$ is good.
\end{proof}

\begin{lemma}
\label{lem:F1}
If $\phi$ is a generic differential in~$F_2$ then $\phi$ is good.
\end{lemma}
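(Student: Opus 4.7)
The plan is to show that for generic $\phi \in F_2$, the two tagged WKB triangulations obtained from small negative and positive rotations differ by a single flip, and then to match the resulting cluster transformation of Proposition~\ref{prop:clustertransformation} with the explicit BPS~automorphism formula of Proposition~\ref{prop:KSautomorphism}.

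First I would extract the structural consequences of $\phi \in F_2$ being generic. Such a $\phi$ has exactly one horizontal saddle connection $\alpha$, which I expect to be non-closed (this should follow from the genericity hypothesis together with the description of $F_2$ in~\cite{BridgelandSmith}, Section~5). Let $\gamma = \widehat\alpha \in \Gamma_\phi$ be its class; then $\gamma$ is the unique class with $Z_\phi(\gamma) \in \mathbb{R}_{>0}$ and $\Omega_\phi(\gamma) \neq 0$, and $\Omega_\phi(\gamma) = 1$. Since heights of active rays do not accumulate at any nonzero phase, I can choose $\varepsilon > 0$ small enough that no active ray of $\phi$ lies in the punctured sector $\{re^{i\theta} : 0 < |\theta| < \varepsilon\}$. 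For any $-\varepsilon < \theta_1 < 0 < \theta_2 < \varepsilon$ with $e^{-2i\theta_j}\phi$ saddle-free, the only active ray of $\phi$ inside the sector $\Delta$ bounded by $\mathbb{R}_{>0}e^{i\theta_j}$ is $\ell = \mathbb{R}_{>0}$ itself, so $\mathbf{S}_\phi(\Delta) = \mathbf{S}_\phi(\ell)$.

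Next I would identify the two tagged WKB triangulations $\tau_1, \tau_2$. The horizontal strip decomposition of $e^{-2i\theta_j}\phi$ agrees with that of $\phi$ away from a neighborhood of $\alpha$; in this neighborhood, the two strips adjoining $\alpha$ are joined into a single strip whose generic trajectory lies on one side or the other of $\alpha$ according to the sign of $\theta_j$. This is exactly the ``flip across a wall'' picture from the Bridgeland--Smith analysis, and I would cite (or give a short, explicit verification of) the resulting statement: $\tau_2$ is obtained from $\tau_1$ by a flip at the arc $k$ transverse to $\alpha$, and under the chain of canonical identifications $\Gamma_{\tau_j} \cong \Gamma_{\phi_j} \cong \Gamma_\phi$ (Lemma~10.3 of~\cite{BridgelandSmith} combined with Gauss--Manin parallel transport along the rotation path), the basis element $\gamma_k \in \Gamma_{\tau_1}$ corresponds to $\pm \gamma$.

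It then remains to compare the two sides. By Proposition~\ref{prop:KSautomorphism},
\[
\mathbf{S}_\phi(\ell)^*(x_\beta) = x_\beta\,(1 - x_\gamma)^{\langle \beta,\gamma\rangle}.
\]
Using the basepoint $\xi$ from part~(1) of Theorem~\ref{thm:main} to identify $\mathbb{T}_-$ with $\mathbb{T}_+$, the twisted character $x_\gamma$ pulls back to $\xi(\gamma)\,X_\gamma = -X_\gamma$ because $\alpha$ is non-closed, so the formula becomes $X_\beta(1 + X_\gamma)^{\langle \beta,\gamma\rangle}$, which is precisely $\kappa_k^*$ from Proposition~\ref{prop:clustertransformation}. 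The remaining monomial isomorphism $\iota_k$ in that proposition records the basis change from $\{\gamma_j\}_\tau$ to the basis of $\Gamma_{\tau_2}$, and is exactly what is needed to reconcile the two maps $X_{\tau_1}, X_{\tau_2}$ both targeting the common $\mathbb{T}_+$. Hence $\mathbf{S}_\phi(\ell) = \mu_k = X_{\tau_2}\circ X_{\tau_1}^{-1}$, proving that $\phi$ is good. I expect the main obstacle to be the careful sign bookkeeping in the previous paragraph: one must verify that the sign choice in $\gamma_k \mapsto \pm\gamma$ is consistent with $\xi(\gamma) = -1$, so that the $-1$ supplied by the basepoint exactly converts the $(1 - x_\gamma)$ of the Kontsevich--Soibelman formula into the $(1 + X_{\gamma_k})$ of the cluster transformation. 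This is precisely the reason the basepoint was defined in terms of the spin structure in Section~\ref{sec:SpinStructuresAndTheTwistedTorus}, and pinning down this compatibility is the crux of the argument.
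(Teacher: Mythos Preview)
Your treatment of the non-closed case is essentially the paper's argument: the paper also invokes Proposition~5.5 of~\cite{BridgelandSmith} for the flip, computes $\mathbf{S}_\phi(\Delta)$ via Proposition~\ref{prop:KSautomorphism} with $\Omega_\phi(\gamma)=1$, and then uses $\xi(\gamma)=-1$ to turn $(1-x_\gamma)$ into $(1+X_{\gamma_k})$. The one place where the paper is more precise is in identifying the monomial part: it cites Proposition~10.4 of~\cite{BridgelandSmith} to say that the Gauss--Manin isomorphism $\Gamma_{\tau_1}\stackrel{\sim}{\to}\Gamma_{\tau_2}$ induces exactly~$\iota_k$ on tori, so that the residual birational part is literally~$\kappa_k$. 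You should cite this rather than leave it as ``exactly what is needed.''

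However, there is a genuine gap. Your expectation that genericity forces the unique horizontal saddle connection to be non-closed is incorrect: the genericity condition only says that classes with proportional periods are proportional, and nothing prevents a single closed saddle connection from being the sole horizontal one. The paper treats this case separately. When the saddle connection is closed, the rotated differentials $e^{-2i\theta_j}\cdot\phi$ determine the \emph{same} tagged triangulation, so the transition map between the $X_{\tau_j}$ is the identity; on the other side, a closed saddle trajectory contributes nothing to $\Omega_\phi(\gamma)$ (recall the definition excludes closed saddle connections and there is no accompanying nondegenerate cylinder here), so $\mathbf{S}_\phi(\Delta)$ is also the identity. Without this second case your argument is incomplete, and the sign bookkeeping you flag as the crux would in fact fail there, since $\xi(\gamma)=+1$ for a closed saddle connection.
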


\begin{proof}
If $\phi\in F_2$ then $\phi$ has a unique horizontal saddle connection. Let us assume for the time being that this is not a closed saddle connection. By Proposition~5.5 of~\cite{BridgelandSmith}, there exists $\varepsilon>0$ such that if $-\varepsilon<\theta_1<0<\theta_2<\varepsilon$ then the rotated differentials $e^{-2i\theta_j}\cdot\phi$ are saddle-free and the associated tagged triangulations are related by a flip. Let $\Delta\subset\mathbb{C}^*$ be the sector having boundary rays $\mathbb{R}_{>0}\cdot e^{i\theta_j}$. If $\phi$ is generic, then by Proposition~\ref{prop:KSautomorphism} the associated BPS~automorphism is given by 
\[
\mathbf{S}_\phi(\Delta)^*(x_\beta)=x_\beta\cdot(1-x_\gamma)^{\langle\beta,\gamma\rangle}
\]
where $\gamma$ is the class of the unique horizontal saddle connection of~$\phi$ and we have used the fact that $\Omega_\phi(\gamma)=1$. Let us write $\Gamma_{\tau_j}$ as in Section~\ref{sec:TheMainResult} for the lattice spanned by tagged arcs of the tagged triangulation~$\tau_j$ determined by~$e^{-2i\theta_j}\cdot\phi$. The Gauss-Manin connection gives an isomorphism $\Gamma_{\tau_1}\stackrel{\sim}{\rightarrow}\Gamma_{\tau_2}$, and by Proposition~10.4 of~\cite{BridgelandSmith}, the induced isomorphism $\mathbb{T}_{\tau_1}\rightarrow\mathbb{T}_{\tau_2}$ is precisely the isomorphism~$\iota_k$ of Proposition~\ref{prop:clustertransformation} where $k$ is the edge being flipped. The transformation relating the maps~\eqref{eqn:sametarget} is therefore the map $\kappa_k$ of Proposition~\ref{prop:clustertransformation}. If we identify $\mathbb{T}_-$ with~$\mathbb{T}_+$ using the canonical basepoint~$\xi$, then this agrees with~$\mathbf{S}_\phi(\Delta)$ because $\xi(\gamma)=-1$.

If the unique horizontal saddle connection of $\phi\in F_2$ is closed, then the rotated differentials $e^{-2i\theta_j}\cdot\phi$ determine the same tagged triangulation, and therefore the transformation relating the maps~\eqref{eqn:sametarget} is the identity. Under our assumptions, the class $\gamma$ of the saddle connection satisfies $\Omega_\phi(\gamma)=0$ so that $\mathbf{S}_\phi(\Delta)$ is the identity as well. Hence $\phi$ is good.
\end{proof}

\begin{lemma}
\label{lem:Fp}
Every generic complete differential is good.
\end{lemma}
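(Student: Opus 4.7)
The plan is induction on $p$, where $\phi \in F_p$. Lemmas \ref{lem:F0} and \ref{lem:F1} handle $p=0$ and $p=2$, and the stratum $F_1$ is empty since $B_0 = B_1$. Assume the statement holds on $B_{p-1}$ and fix a generic $\phi \in F_p$ with $p > 2$. My first observation would be that goodness is robust under small perturbations: if $\theta_1, \theta_2$ are chosen so that the rotated differentials $e^{-2i\theta_j}\phi$ are saddle-free, then for every sufficiently nearby $\phi'$ the rotated differentials $e^{-2i\theta_j}\phi'$ are also saddle-free and define the same tagged WKB triangulations $\tau_1, \tau_2$. Hence the composition $\Psi$ of cluster transformations from Proposition \ref{prop:clustertransformation} relating the two Fock--Goncharov charts is the same for $\phi$ and for every such $\phi'$, and goodness of either differential reduces to the identity $\mathbf{S}_{(\cdot)}(\Delta) = \Psi$. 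It therefore suffices to find a generic $\phi' \in B_{p-1}$ with $\mathbf{S}_{\phi'}(\Delta) = \mathbf{S}_\phi(\Delta)$, since then $\phi'$ is good by induction and the common right-hand side propagates to $\phi$.

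The actual work is to effect this transfer across the codimension-one wall $C \subset F_p$ containing $\phi$, and this is where I would invoke the walls-have-ends Proposition \ref{prop:wallshaveends}. Let $\phi_0 \in \overline{C}$ be the point provided by that proposition and choose a neighborhood $V$ of $\phi_0$ with $V \cap B_{p-1}$ connected and small enough that the data $\tau_1, \tau_2$ and the transformation $\Psi$ are uniform throughout $V$. Select generic $\phi'_-, \phi'_+ \in V \cap B_{p-1}$ on the two sides of $C$ near $\phi_0$ and join them by a path $\gamma \subset V \cap B_{p-1}$. Every generic point of $\gamma$ lies in some $F_q$ with $q < p$ and is good by the induction hypothesis; combined with Theorem \ref{thm:firstWCF} applied inside each cell $\mathcal{C}_\tau$ crossed by $\gamma$, this yields $\mathbf{S}_{\phi'_-}(\Delta) = \Psi = \mathbf{S}_{\phi'_+}(\Delta)$. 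I would then move $\phi$ within the generic locus of $C$ to a point near $\phi_0$, using that the BPS invariants are locally constant inside a stratum, and compare its BPS automorphism with that of $\phi'_+$ by a limiting argument as $\phi \to \phi_0$ from within $C$ and as $\phi'_+ \to \phi_0$ from within $V \cap B_{p-1}$.

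The main obstacle will be this last comparison: establishing that the BPS automorphism is continuous on $V$ in a sense strong enough for the identity $\mathbf{S}_{\phi'_+}(\Delta) = \Psi$ to pass to $\phi$ across the wall $C$. This should follow from the convergence estimates in Section \ref{sec:BPSautomorphisms} together with the uniform bound $|\Omega_\phi(\gamma)| \leq 2$ from Lemma~5.1 of \cite{BridgelandSmith}, which together ensure that the product defining $\mathbf{S}_\phi(\Delta)$ converges uniformly on compact subsets of its domain, so that only finitely many ray contributions are relevant at a time and each varies continuously except at precisely the anticipated wall-crossings. Once this continuity is in hand, the walls-have-ends detour through $V \cap B_{p-1}$ ensures that no discontinuity can be picked up across $C$, completing the induction.
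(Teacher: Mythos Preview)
Your induction scheme and invocation of walls-have-ends are in the right spirit, but the mechanism for the crucial transfer step---comparing $\mathbf{S}_\phi(\Delta)$ for $\phi\in C\subset F_p$ with $\mathbf{S}_{\phi'}(\Delta)$ for nearby $\phi'\in B_{p-1}$---has a real gap. You propose a continuity/limiting argument, but the individual ray factors $\mathbf{S}_\phi(\ell)$ genuinely jump as $\phi$ crosses walls (this is precisely where BPS invariants change), and your phrase ``each varies continuously except at precisely the anticipated wall-crossings'' concedes the point: crossing $C$ \emph{is} such a wall-crossing, and nothing in your sketch controls the jump there. The detour $\gamma\subset V\cap B_{p-1}$ goes \emph{around} $C$ but never reaches it, so it gives $\mathbf{S}_{\phi'_-}(\Delta)=\Psi=\mathbf{S}_{\phi'_+}(\Delta)$ (which already follows from the induction hypothesis alone, without the detour) while saying nothing about points of $C$. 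Likewise, your assertion that ``BPS invariants are locally constant inside a stratum,'' used to slide $\phi$ along $C$ toward $\phi_0$, is not established anywhere in the paper and would need its own argument.

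The paper avoids both issues by a rotation trick you are missing. Rather than attempting analytic continuity, one picks a phase $s$ such that the rotated differentials $\widetilde{q}=e^{-2is}\cdot q$ are \emph{saddle-free} for all $q$ in a small neighborhood inside the hyperplane $D$ that locally contains $F_p$. Then all the $\widetilde{q}$ lie in a single cell $\mathcal{C}_\tau$, the rotated sector $\widetilde{\Delta}=e^{-is}\cdot\Delta$ sits in the upper half plane with non-active boundary rays, and Theorem~\ref{thm:firstWCF} applies directly along a path in $V\cap D$ running from a generic $q_0\in C$ to a generic $q_1$ off $F_p$ (hence in $B_{p-1}$). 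This yields $\mathbf{S}_{\widetilde{q}_0}(\widetilde{\Delta})=\mathbf{S}_{\widetilde{q}_1}(\widetilde{\Delta})$ and thus $\mathbf{S}_{q_0}(\Delta)=\mathbf{S}_{q_1}(\Delta)$, with no continuity estimates required. The same device, applied to paths inside small open sets $U_\phi\subset C$, shows that goodness is constant along each connected component of $F_p$. Note that the paper's path stays in $D$ and starts \emph{on} $C$; your path in $V\cap B_{p-1}$ cannot, by construction, touch $C$, which is why it cannot carry the conclusion across.
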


\begin{proof}
Since the union $\coprod_{p=0}^\infty F_p\subset\mathscr{Q}^\pm(\mathbb{S},\mathbb{M})$ is the set of all complete GMN differentials, we must show, for any $p\geq0$, that a generic differential in~$F_p$ is good. The case $p=1$ is vacuous because $F_1=\emptyset$. The cases $p=0$ and $p=2$ were handled in Lemmas~\ref{lem:F0} and~\ref{lem:F1}, respectively.

Let $p>2$ and assume inductively that all generic differentials in $F_{p-1}$ are good. Let $C$ denote a connected component of $F_p$. By Proposition~5.5 of~\cite{BridgelandSmith}, we can find, for any $\phi\in C$, an open neighborhood $\phi\in U_\phi\subset C$ and a constant $\varepsilon_\phi>0$ such that $e^{-2i\theta}\cdot q\in B_{p-1}$ when $0<|\theta|<\varepsilon_\phi$ and $q\in U_\phi$. By shrinking $U_\phi$ if necessary, we can assume further that there exist $\theta_j$ with $-\varepsilon_\phi<\theta_1<0<\theta_2<\varepsilon_\phi$ such that the differentials $e^{-2i\theta_j}\cdot q$ are saddle-free for all~$q\in U_\phi$. Let us write $\Delta\subset\mathbb{C}^*$ for the sector with boundary rays $\mathbb{R}_{>0}\cdot e^{i\theta_j}$. By shrinking $U_\phi$ once more if necessary, we can find a phase $s_\phi\in\mathbb{R}$ such that $\widetilde{q}=e^{-2is_\phi}\cdot q$ is saddle-free for all $q\in U_\phi$ and the rotated sector $\widetilde{\Delta}=e^{-is_\phi}\cdot\Delta$ lies in the upper half plane with non-active boundary rays for each~$\widetilde{q}$ with $q\in U_\phi$. If the differentials $q_t$, $t\in[0,1]$, form a path in~$U_\phi$ with generic endpoints, then we have $\mathbf{S}_{\widetilde{q}_0}(\widetilde{\Delta})=\mathbf{S}_{\widetilde{q}_1}(\widetilde{\Delta})$ by Theorem~\ref{thm:firstWCF}. From the definition of the BPS~automorphism given in Section~\ref{sec:BPSautomorphisms}, we get $\mathbf{S}_{q_0}(\Delta)=\mathbf{S}_{q_1}(\Delta)$. It then follows from the inductive hypothesis that $q_0$ is good if and only if $q_1$ is good. Since the sets $U_\phi$ form an open cover of~$C$ and the generic differentials are dense in~$F_p$, it will follow that all generic differentials in~$C$ are good if we can prove that one such differential is good.

Let $\phi$ be a point in the closure of~$C$ as in Proposition~\ref{prop:wallshaveends}. By Proposition~5.5 of~\cite{BridgelandSmith}, there exists a neighborhood $\phi\in U\subset B_p$ such that $F_p\cap U$ is contained in a hyperplane $D\subset\mathscr{Q}^\pm(\mathbb{S},\mathbb{M})$ given locally by a condition of the form $Z_\phi(\gamma)\in\mathbb{R}$ for some $\gamma\in\Gamma_\phi$. Thus there is a constant $\varepsilon>0$ such that $e^{-2i\theta}\cdot q\in B_{p-1}$ when $0<|\theta|<\varepsilon$ and $q\in U\cap D$. By shrinking this $U$ if necessary, we can assume there exist $\theta_j$ with $-\varepsilon<\theta_1<0<\theta_2<\varepsilon$ such that the differentials $e^{-2i\theta_j}\cdot q$ are saddle-free for all $q\in U\cap D$. Let $\Delta\subset\mathbb{C}^*$ be the sector with boundary rays $\mathbb{R}_{>0}\cdot e^{-2i\theta_j}$. By shrinking $U$ once more if necessary, we can find $s\in\mathbb{R}$ such that $\widetilde{q}=e^{-2is}\cdot q$ is saddle-free for all $q\in U\cap D$ and the sector $\widetilde{\Delta}=e^{-is}\cdot\Delta$ lies in the upper half plane with non-active boundary rays for each of these~$\widetilde{q}$. Then by Proposition~\ref{prop:wallshaveends}, there is a smaller neighborhood $\phi\in V\subset U$ for which $V\cap B_{p-1}$ is connected. Let $q_t$, $t\in[0,1]$, be a path in~$V\cap D$ connecting a generic~$q_0\in C$ to a generic differential~$q_1$ that lies off of~$F_p$. Applying Theorem~\ref{thm:firstWCF}, we see that $\mathbf{S}_{\widetilde{q}_0}(\widetilde{\Delta})=\mathbf{S}_{\widetilde{q}_1}(\widetilde{\Delta})$, hence $\mathbf{S}_{q_0}(\Delta)=\mathbf{S}_{q_1}(\Delta)$ as before. By the induction hypothesis, $q_1$ is good and therefore so is~$q_0$. It follows that any generic differential in~$C\cap V$ is good.
\end{proof}

Now suppose $\phi\in\mathscr{Q}^\pm(\mathbb{S},\mathbb{M})$ is any complete and generic differential. Note that we have  
\[
\mathbf{S}_\phi(\Delta_1\cup\Delta_2)=\mathbf{S}_\phi(\Delta_1)\circ\mathbf{S}_\phi(\Delta_2)
\]
whenever $\Delta_1$,~$\Delta_2\subset\mathbb{C}^*$ are adjacent sectors in the clockwise order whose boundary rays are non-active and whose union is a convex sector. We can use this fact to prove Theorem~\ref{thm:main} for a general convex sector $\Delta\subset\mathbb{C}^*$. Indeed, let $t\in[\theta_1,\theta_2]$ where $\theta_1$ and $\theta_2$ are the phases of the boundary rays of~$\Delta$. The rotated differential $e^{-2it}\cdot\phi$ is good by Lemma~\ref{lem:Fp}. Let $\varepsilon_t>0$ be the constant associated to $e^{-2it}\cdot\phi$ in the definition of a good differential. Then the open intervals $(t-\varepsilon_t,t+\varepsilon_t)$ form a cover of $[\theta_1,\theta_2]$. As this interval is compact, we can find finitely many $t_1,\dots,t_k\in[\theta_1,\theta_2]$ in increasing order such that the associated open intervals form a finite subcover. Choose $s_j\in[\theta_1,\theta_2]$ so that 
\[
\theta_1=s_1<t_1<s_2<t_2<\dots<t_k<s_{k+1}=\theta_2
\]
and so the differentials $e^{-2is_j}\cdot\phi$ are saddle-free and $t_j-s_j$,~$s_{j+1}-t_j<\varepsilon_{t_j}$ for $j=1,\dots,k$. For each~$j$, let $\Delta_j$ be the sector with boundary rays $\mathbb{R}_{>0}\cdot e^{is_j}$ and $\mathbb{R}_{>0}\cdot e^{is_{j+1}}$. Part~(2) of Theorem~\ref{thm:main} now follows since 
\[
\mathbf{S}_\phi(\Delta)=\mathbf{S}_\phi(\Delta_k)\circ\dots\circ\mathbf{S}_\phi(\Delta_1)
\]
and each factor $\mathbf{S}_\phi(\Delta_j)$ on the right is identified with the birational transformation of~$\mathbb{T}_+$ relating the Fock-Goncharov coordinates associated to $e^{-2is_j}\cdot\phi$ and $e^{-2is_{j+1}}\cdot\phi$.

\appendix
\section{The motivic wall-crossing formula}
\label{sec:TheMotivicWallCrossingFormula}

In this appendix, we explain how the version of the wall-crossing formula considered in Section~\ref{sec:TheWallCrossingFormulaForQuadraticDifferentials} is derived from the usual version in motivic Donaldson-Thomas theory. Since we are mainly interested in the application to quadratic differentials, our discussion will be quite cursory. For other introductory accounts of this material, see the references~\cite{KontsevichSoibelman2, Reineke, Keller, Meinhardt, Bridgeland18}.

\subsection{Quivers with potential}

Recall that a \emph{quiver} is simply a directed graph. It consists of a finite set $Q_0$ (the set of \emph{vertices}), a finite set $Q_1$ (the set of \emph{arrows}), and maps $s:Q_1\rightarrow Q_0$ and $t:Q_1\rightarrow Q_0$ taking an arrow to its \emph{source} and \emph{target}, respectively. We often write $a:i\rightarrow j$ to mean that $a$ is an arrow with $s(a)=i$ and $t(a)=j$.

Let $Q=(Q_0,Q_1,s,t)$ be a quiver. Then a complex \emph{representation} $M$ of~$Q$ consists of a complex vector space $M_i$ for each vertex $i\in Q_0$ and a linear map $M_a:M_i\rightarrow M_j$ for each arrow $a:i\rightarrow j$ in~$Q_1$. A representation is finite-dimensional if each of the vector spaces $M_i$ is finite-dimensional. In this case, we can define the \emph{dimension vector} to be the vector $\underline{\dim}(M)=(\dim_\mathbb{C}M_i)_{i\in Q_0}$. The finite-dimensional representations of~$Q$ form an abelian category where a morphism $M\rightarrow N$ of representations is a collection of linear maps $\eta_i:M_i\rightarrow N_i$ for $i\in Q_0$ satisfying the identity $\eta_{t(a)}\circ M_a=N_a\circ\eta_{s(a)}$ for every $a\in Q_1$.

Representations of a quiver can be viewed alternatively as modules over a certain noncommutative algebra spanned by paths in the quiver. By a \emph{path} in~$Q$, we mean a sequence of arrows $a_0,\dots,a_k$ such that $t(a_i)=s(a_{i-1})$ for $i=1,\dots,k$. Let us denote this path by $p=a_0\dots a_k$. We define its \emph{source} by $s(p)=s(a_k)$ and its \emph{target} by $t(p)=t(a_0)$. A path $p$ in~$Q$ is \emph{cyclic} if its source and target coincide. Two paths $p$ and $q$ are \emph{composable} if $s(p)=t(q)$, and in this case their composition $pq$ is defined by juxtaposition. To any quiver $Q$, we associate a $\mathbb{C}$-algebra $\mathbb{C}Q$ called the \emph{path algebra} of~$Q$. It is spanned as a vector space by the set of all paths in~$Q$. The product of two paths is defined to be their composition if the paths are composable and zero otherwise. Extending this operation linearly gives the multiplication on~$\mathbb{C}Q$. The following fact is well known.

\begin{lemma}
\label{lem:equivalence}
The category of finite-dimensional representations of a quiver $Q$ is equivalent to the category of finite-dimensional left modules over the path algebra $\mathbb{C}Q$. Under this equivalence, a representation $M$ of~$Q$ maps to the obvious module over~$\mathbb{C}Q$ whose underlying vector space is the direct sum $\bigoplus_{i\in Q_0}M_i$.
\end{lemma}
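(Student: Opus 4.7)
The plan is to exhibit explicit functors in both directions and verify they are mutually quasi-inverse. First I would define a functor $F$ from representations of $Q$ to left $\mathbb{C}Q$-modules by setting $F(M) = \bigoplus_{i\in Q_0} M_i$ as a vector space, and specifying the module structure on generators of $\mathbb{C}Q$: the trivial path $e_i$ at vertex $i$ acts as the projection onto the summand $M_i$, while an arrow $a:i\to j$ acts as the composition $F(M)\twoheadrightarrow M_i \xrightarrow{M_a} M_j \hookrightarrow F(M)$. This extends uniquely to an action of paths by composition and then linearly to all of $\mathbb{C}Q$. On morphisms, $F$ sends a collection $\eta=(\eta_i)_{i\in Q_0}$ to the direct sum map $\bigoplus_i \eta_i$; the intertwining condition $\eta_{t(a)}\circ M_a = N_a\circ\eta_{s(a)}$ translates immediately into $\mathbb{C}Q$-linearity on the generators of the path algebra.

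In the other direction I would define $G$ using the system of orthogonal idempotents given by the trivial paths. These satisfy $e_ie_j = \delta_{ij}e_i$ and $\sum_{i\in Q_0} e_i = 1$ (using that $Q_0$ is finite), so any unital left $\mathbb{C}Q$-module $N$ decomposes canonically as $N = \bigoplus_{i\in Q_0} e_iN$. Set $G(N)_i = e_iN$, and for an arrow $a:i\to j$ define $G(N)_a : e_iN \to e_jN$ by left multiplication by $a$; this lands in $e_jN$ because $e_j\cdot a = a$ in $\mathbb{C}Q$, and its source is $e_iN$ because $a\cdot e_i = a$. Morphisms of $\mathbb{C}Q$-modules restrict to the summands $e_iN$ because $\mathbb{C}Q$-linearity in particular commutes with the $e_i$, yielding morphisms of quiver representations. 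Finite-dimensionality is preserved in both directions because the idempotent decomposition is a finite direct sum.

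To finish, I would check the two natural isomorphisms $G\circ F\cong \mathrm{id}$ and $F\circ G\cong \mathrm{id}$. The first is immediate: applying $G$ after $F$ recovers each $M_i$ as the image of the idempotent $e_i$, and the linear map attached to an arrow $a$ is by construction $M_a$. For the second, the vector space decomposition $N=\bigoplus_i e_iN$ reassembles to $N$ itself, and one checks that the resulting $\mathbb{C}Q$-action matches the original, since the trivial paths and arrows together generate $\mathbb{C}Q$ as an algebra and the two actions agree on these generators. There is no real obstacle here; the whole argument is a routine translation between the generators-and-relations description of $\mathbb{C}Q$ and the combinatorial data of a quiver representation. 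The only point that deserves explicit mention is the (standard) convention that ``module'' means unital module, so that the identity element $\sum_i e_i$ of $\mathbb{C}Q$ acts as the identity of $N$; without this the decomposition $N=\bigoplus_i e_iN$ could fail.
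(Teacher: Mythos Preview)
The paper does not actually prove this lemma; it is stated as ``well known'' and left without proof. Your proposal is correct and is precisely the standard argument one would give: build the functor $F$ by letting trivial paths act as projections and arrows act via the representation maps, build the quasi-inverse $G$ using the complete system of orthogonal idempotents $\{e_i\}$ to decompose a module, and verify the natural isomorphisms. Your remark about needing modules to be unital is the one genuine subtlety, and you flagged it appropriately. There is nothing to compare against in the paper.
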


It is often important to consider representations of a quiver $Q$ where the linear maps are required to satisfy certain relations. For us, these relations will always come from a \emph{potential}, which is defined as a $\mathbb{C}$-linear combination of cyclic paths in~$Q$. If $p=a_1\dots a_k$ is a cyclic path in~$Q$ and $a\in Q_1$ is an arrow, then we can take the \emph{cyclic derivative} 
\[
\partial_a(p)=\sum_{i:a_i=a}a_{i+1}\dots a_ka_1\dots a_{i-1}\in\mathbb{C}Q.
\]
Extending this operation linearly, we can define $\partial_a(W)$ for any potential~$W$. Then the \emph{Jacobian ideal} is the ideal $\mathfrak{a}\subset\mathbb{C}Q$ generated by all cyclic derivatives of~$W$, and the \emph{Jacobian algebra} is the quotient $J(Q,W)=\mathbb{C}Q/\mathfrak{a}$.

To relate quiver representations and quadratic differentials, we should consider a slightly modified setup. Namely, we should take the completion $\widehat{\mathbb{C}Q}$ of the path algebra $\mathbb{C}Q$ with respect to the ideal generated by the arrows. Then the \emph{complete Jacobian algebra} is defined as the quotient of $\widehat{\mathbb{C}Q}$ by the closure of the Jacobian ideal. As shown in Section~10 of~\cite{DWZ}, a finite-dimensional module over the complete Jacobian algebra is the same thing as a nilpotent module over the usual Jacobian algebra, that is, a module annihilated by sufficiently long paths in~$Q$.

\subsection{Stability conditions}

The concept of a stability condition on a triangulated category was introduced by Bridgeland in~\cite{Bridgeland07}. In this appendix, we will be concerned with a version of this concept for abelian categories of quiver representations. Let $\mathcal{A}=\mathcal{A}(Q,W)$ denote the category of nilpotent modules over the Jacobian algebra of a quiver with potential $(Q,W)$. Then a \emph{stability condition} on~$\mathcal{A}$ is a tuple $\zeta\in\mathbb{H}_+^{Q_0}$ of complex numbers in the domain 
\[
\mathbb{H}_+=\{z=re^{i\pi\theta}:\text{$r>0$ and $0<\theta\leq1$}\}\subset\mathbb{C}.
\]

Let $\Gamma=\mathbb{Z}^{Q_0}$ be the lattice spanned by vertices of the quiver~$Q$. Given a stability condition~$\zeta$ on~$\mathcal{A}$, we define a group homomorphism $Z_\zeta:\Gamma\rightarrow\mathbb{C}$ called the \emph{central charge} by the rule
\[
Z_\zeta(\gamma)=\sum_{i\in Q_0}\gamma_i\cdot\zeta_i\in\mathbb{C}.
\]
We can then define the \emph{slope} of a nonzero vector $\gamma\in\mathbb{N}^{Q_0}$ to be the real number $\frac{1}{\pi}\cdot\arg Z_\zeta(\gamma)\in(0,1]$. The slope~$\mu(M)$ of an object $M\in\mathcal{A}$ is defined to be the slope of its dimension vector. A nonzero object $M\in\mathcal{A}$ is said to be \emph{semistable} if we have 
\[
\mu(N)\leq\mu(M)
\]
for every proper nonzero submodule $N\subset M$.

For convenience, we define, for any real number $\mu\in(0,1]$, the subset 
\[
\Lambda_\mu^\zeta=\{\gamma\in\mathbb{N}^{Q_0}:\text{$\gamma$ has slope $\mu$}\}\cup\{0\}\subset\Gamma.
\]
We denote by $\langle-,-\rangle$ the skew-symmetrized \emph{Euler pairing} on the lattice $\Gamma$ given by $\langle\alpha,\beta\rangle=(\alpha,\beta)-(\beta,\alpha)$ where  
\[
(\alpha,\beta)=\sum_i\alpha_i\beta_i-\sum_{a:i\rightarrow j}\alpha_i\beta_j.
\]
We say that a stability condition $\zeta$ is \emph{generic} if, for each $\mu$, one has $\langle \alpha,\beta\rangle=0$ for $\alpha$,~$\beta\in\Lambda_\mu^\zeta$.

\subsection{Motivic theories}

We now describe a general framework in which we can formulate the Kontsevich-Soibelman wall-crossing formula. Following~\cite{Meinhardt}, we define a \emph{motivic theory} to be a rule associating to a scheme~$X$ an abelian group~$R(X)$. This rule is required to be functorial in two ways: If $u:X\rightarrow Y$ is any morphism of schemes, there is a homomorphism $u^*:R(Y)\rightarrow R(X)$ called the \emph{pullback} along~$u$, and if $u$ is of finite type, there is a homomorphism $u_!:R(X)\rightarrow R(Y)$ called the \emph{pushforward} along~$u$. In addition, there is an associative, symmetric operation 
\[
\boxtimes:R(X)\otimes_{\mathbb{Z}}R(Y)\rightarrow R(X\times Y)
\]
called the \emph{exterior product} with unit element $1\in R(\Spec\mathbb{C})$, and operations 
\[
\sigma^n:R(X)\rightarrow R(\Sym^n(X))
\]
for each~$n\in\mathbb{N}$ where $\Sym(X)=X^n\sslash S_n$ is the $n$th symmetric product. These data are required to satisfy several axioms listed in Section~4 of~\cite{Meinhardt}.

\begin{example}
A fundamental example of a motivic theory is the theory $R=\underline{\mathrm{K}}_0(\Sch)$, which associates to a connected scheme~$X$ the group $R(X)=\mathrm{K}_0(\Sch_X)$ generated by isomorphism classes $[V\rightarrow X]$ of schemes $V$ of finite type over~$X$ subject to the relation 
\[
[V\rightarrow X]=[Z\rightarrow X]+[V\setminus Z\rightarrow X]
\]
for any closed subscheme $Z\subset V$. We extend this to nonconnected schemes $X$ by defining $R(X)=\prod_{X_i\in\pi_0(X)}K_0(\Sch_{X_i})$. To get the structure of a motivic theory, we define the pullback by 
\[
u^*\left([W\rightarrow Y]\right)=[X\times_Y W\rightarrow X]
\]
for any morphism $u:X\rightarrow Y$ and define the pushforward by 
\[
u_!\left([V\stackrel{v}{\rightarrow}X]\right)=[V\stackrel{u\circ v}{\longrightarrow}Y]
\]
for any morphism $u:X\rightarrow Y$ of finite type. The exterior product is given by 
\[
[V\stackrel{v}{\rightarrow}X]\boxtimes[W\stackrel{w}{\rightarrow}Y]=[V\times W\stackrel{v\times w}{\longrightarrow}X\times Y]
\]
with unit element $1=[\Spec\mathbb{C}\stackrel{\mathrm{id}}{\rightarrow}\Spec\mathbb{C}]\in R(\Spec\mathbb{C})$, and the $\sigma^n$ operations are given by 
\[
\sigma^n\left([V\rightarrow X]\right)=[\Sym^n(V)\rightarrow\Sym^n(X)]
\]
for each $n\in\mathbb{N}$.
\end{example}

Given any motivic theory~$R$, we can define a product operation on~$R(\Spec\mathbb{C})$ by the rule $ab=+_!(a\boxtimes b)$ where $+:\Spec\mathbb{C}\times\Spec\mathbb{C}\rightarrow\Spec\mathbb{C}$ is the obvious isomorphism. In this way, the abelian group $R(\Spec\mathbb{C})$ becomes a ring with unit element~1. We can also associate to each scheme $X$, the element $[X]_R\coloneqq c_!c^*(1)\in R(\Spec\mathbb{C})$ where $c:X\rightarrow\Spec\mathbb{C}$ is the constant map. In particular, we define the element $\mathbb{L}_R\coloneqq[\mathbb{A}^1]_R$.

\begin{lemma}[\cite{Meinhardt}, Exercise~4.6]
The identity 
\[
[\mathrm{GL}(n)]_R=\prod_{i=0}^{n-1}(\mathbb{L}_R^n-\mathbb{L}_R^i)
\]
holds in the ring $R(\Spec\mathbb{C})$.
\end{lemma}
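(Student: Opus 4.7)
The plan is to compute $[\mathrm{GL}(n)]_R$ by realizing $\mathrm{GL}(n)$ as the variety of ordered bases of $\mathbb{C}^n$ and peeling off basis vectors one at a time. For $1 \leq k \leq n$, let $V_k \subset (\mathbb{A}^n)^k$ denote the open subvariety parametrizing $k$-tuples $(v_1,\ldots,v_k)$ of linearly independent vectors, so that $V_n = \mathrm{GL}(n)$ (columns of an invertible matrix) and $V_1 = \mathbb{A}^n \setminus \{0\}$, the latter having class $\mathbb{L}_R^n - 1$ by the scissor relations combined with the multiplicativity $[\mathbb{A}^n]_R = \mathbb{L}_R^n$.

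For $k \geq 2$, I would consider the forgetful morphism $p_k \colon V_k \to V_{k-1}$ dropping the last vector. Its fiber over $(v_1,\ldots,v_{k-1})$ is $\mathbb{A}^n \setminus \mathrm{span}(v_1,\ldots,v_{k-1}) \cong \mathbb{A}^n \setminus \mathbb{A}^{k-1}$, which has class $\mathbb{L}_R^n - \mathbb{L}_R^{k-1}$ again by the scissor relations. Moreover, $p_k$ is Zariski locally trivial: the tautological rank-$(k-1)$ subbundle $\mathcal{E} \subset \mathcal{O}_{V_{k-1}}^n$ is globally trivialized by the tautological sections $v_1,\ldots,v_{k-1}$, and on the Zariski open cover of $V_{k-1}$ indexed by the choice of an invertible $(k-1)\times(k-1)$ minor of the matrix $(v_1 \mid \cdots \mid v_{k-1})$ the quotient bundle $\mathcal{O}^n/\mathcal{E}$ also trivializes, making $p_k$ restrict to a product projection on each open piece.

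The main motivic input is then the general fact that $[E]_R = [B]_R \cdot [F]_R$ for any Zariski locally trivial fibration $\pi \colon E \to B$ with fiber a fixed scheme $F$. This follows from the axioms of a motivic theory by choosing a finite stratification of $B$ into locally closed pieces $B_i$ over which $\pi$ trivializes, applying the scissor relations to both $B$ and $E$, and using the identity $[B_i \times F]_R = [B_i]_R \cdot [F]_R$ on each piece (a direct consequence of the compatibility of $\boxtimes$ with pushforward along constant maps). Applying this to $p_k$ yields the recursion
\[
[V_k]_R = [V_{k-1}]_R \cdot (\mathbb{L}_R^n - \mathbb{L}_R^{k-1}),
\]
and telescoping from $k=1$ up to $k=n$ gives
\[
[\mathrm{GL}(n)]_R = [V_n]_R = \prod_{k=1}^{n}(\mathbb{L}_R^n - \mathbb{L}_R^{k-1}) = \prod_{i=0}^{n-1}(\mathbb{L}_R^n - \mathbb{L}_R^i).
\]

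The main obstacle is the multiplicativity lemma for Zariski locally trivial fibrations; once it is extracted carefully from the bare axioms of a motivic theory (as opposed to the more familiar setting of $\underline{\mathrm{K}}_0(\Sch)$, where it is standard), everything else is a direct generalization of the familiar point-count formula $|\mathrm{GL}(n,\mathbb{F}_q)| = \prod_{i=0}^{n-1}(q^n - q^i)$, with $\mathbb{L}_R$ playing the role of $q$.
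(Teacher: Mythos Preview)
The paper does not actually prove this lemma: it is stated with a citation to \cite{Meinhardt}, Exercise~4.6, and no argument is given. Your proof is correct and is exactly the standard one intended by that exercise---the motivic lift of the finite-field count $|\mathrm{GL}(n,\mathbb{F}_q)| = \prod_{i=0}^{n-1}(q^n - q^i)$, carried out via the flag of partial frames $V_1 \subset V_2 \subset \cdots \subset V_n = \mathrm{GL}(n)$.

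One small clarification on the Zariski local triviality of $p_k$: the cleanest way to phrase it is that over the open set $U_I \subset V_{k-1}$ where the rows indexed by $I \subset \{1,\ldots,n\}$ with $|I|=k-1$ form an invertible minor, one has a splitting $\mathcal{O}^n|_{U_I} \cong \mathcal{E}|_{U_I} \oplus \mathcal{O}_{U_I}^{n-k+1}$ (projection onto the complementary coordinates), under which the complement of $\mathcal{E}$ becomes $U_I \times \mathbb{A}^{k-1} \times (\mathbb{A}^{n-k+1}\setminus\{0\}) \cong U_I \times (\mathbb{A}^n \setminus \mathbb{A}^{k-1})$. Your description via trivializing the quotient bundle amounts to the same thing. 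The rest---the scissor relation $[X]_R = [Z]_R + [X\setminus Z]_R$ and the K\"unneth identity $[X\times Y]_R = [X]_R\cdot[Y]_R$---are direct consequences of the additivity and exterior-product axioms for a motivic theory in Meinhardt's sense, so your multiplicativity lemma for Zariski locally trivial fibrations is legitimate.
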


The notion of a motivic theory can be extended to the notion of a \emph{stacky motivic theory}. This is an operation $\mathfrak{R}$ that assigns to every stack of the form $\mathfrak{X}=\coprod_iX_i/G_i$, where the $X_i$ are schemes and the $G_i$ are linear algebraic groups, an abelian group $\mathfrak{R}(\mathfrak{X})$. One has a pullback $u^*:\mathfrak{R}(\mathfrak{Y})\rightarrow\mathfrak{R}(\mathfrak{X})$ for any 1-morphism $u:\mathfrak{X}\rightarrow\mathfrak{Y}$, and a pushforward $u_!:\mathfrak{R}(\mathfrak{X})\rightarrow\mathfrak{R}(\mathfrak{Y})$ if $u$ is of finite type in the sense of~\cite{Meinhardt}. One also has structures $\boxtimes$, $1\in\mathfrak{R}(\Spec\mathbb{C})$, and $\sigma^n$ for $n\in\mathbb{N}$ satisfying various properties as in the definition of a motivic theory for schemes.

Let $R$ be any motivic theory satisfying the condition $\sigma^n(a\mathbb{L}_R)=\sigma^n(a)\mathbb{L}_R^n$ for any $a\in R(X)$ with~$X$ any scheme and $n\in\mathbb{N}$. As explained in~\cite{Meinhardt}, Section~4.3, there is a functorial construction which associates to~$R$ a stacky motivic theory $R^\mathrm{st}$ whose value on a connected scheme~$X$ is the group 
\[
R^\mathrm{st}(X)\coloneqq R(X)\left[[GL(n)]_R^{-1}:n\in\mathbb{N}\right]
\]
defined using the $R(\Spec\mathbb{C})$-module structure on~$R(X)$ induced by the exterior product.

\subsection{Motivic DT invariants}

Let us fix a motivic theory $R$ satisfying $\sigma^n(a\mathbb{L}_R)=\sigma^n(a)\mathbb{L}_R^n$ for any $a\in R(X)$ with~$X$ any scheme and $n\in\mathbb{N}$. As in~\cite{Meinhardt}, Example~6.1, we can apply the results of~\cite{DavisonMeinhardt1}, Appendix~B, to extend the $\sigma^n$ operations to $R(X)[\mathbb{L}_R^{1/2}]$ to get a new motivic theory and then pass to the associated stacky motivic theory $\mathfrak{R}=R[\mathbb{L}_R^{1/2}]^\mathrm{st}$. The constructions described below will depend on a choice of stacky vanishing cycle with values in~$\mathfrak{R}$, which can be chosen canonically (see~\cite{Meinhardt}, Section~5 for the definition and construction of vanishing cycles).

Suppose we are given a stability condition $\zeta$ on~$\mathcal{A}=\mathcal{A}(Q,W)$ and a real number $\mu\in(0,1]$. Let us write $\mathcal{S}(\mathcal{A})$ for the abelian group $\mathfrak{R}(\mathbb{N}^{Q_0})\cong\prod_{\gamma\in\mathbb{N}^{Q_0}}\mathfrak{R}(\Spec\mathbb{C})\cong\mathfrak{R}(\Spec\mathbb{C})\llbracket x_\gamma:\gamma\in\mathbb{N}^{Q_0}\rrbracket$. Then we can construct an element 
\[
\mathcal{DT}(\mathcal{A})_\mu^\zeta\in\mathcal{S}(\mathcal{A})
\]
called the \emph{Donaldson-Thomas function}. The precise definition of this object is rather technical, and therefore we will only sketch the construction here, referring to~\cite{Meinhardt} for further details. To begin, we equip the abelian group $\mathcal{S}(\mathcal{A})\cong\mathfrak{R}(\Spec\mathbb{C})\llbracket x_\gamma:\gamma\in\mathbb{N}^{Q_0}\rrbracket$ with an auxiliary product $*$. It is the unique $\mathfrak{R}(\Spec\mathbb{C})$-bilinear continuous product defined on generators by 
\[
x_\alpha*x_\beta=\mathbb{L}_R^{\langle\alpha,\beta\rangle/2}\cdot x_{\alpha+\beta}.
\]
Associated to the category~$\mathcal{A}$, there is an associative algebra $\mathcal{H}(\mathcal{A})$ known as the \emph{Ringel-Hall algebra}, and there is an algebra homomorphism 
\[
\mathcal{I}:\mathcal{H}(\mathcal{A})\rightarrow\mathcal{S}(\mathcal{A})
\]
known as the \emph{integration map}. (To be somewhat more precise, the map $\mathcal{I}$ that we consider here is the composition of the integration map defined in~\cite{Meinhardt} with the pushforward along the map $\underline{\dim}$ sending a $\mathbb{C}Q$-module to its dimension vector.) As explained in Section~6.3 of~\cite{Meinhardt}, there is an element $\delta_\mu^\zeta\in\mathcal{H}(\mathcal{A})$ naturally associated to each slope $\mu\in(0,1]$. We define $\mathcal{DT}(\mathcal{A})_\mu^\zeta$ to be the unique element of $\mathcal{S}(\mathcal{A})\cong\mathfrak{R}(\Spec\mathbb{C})\llbracket x_\gamma:\gamma\in\mathbb{N}^{Q_0}\rrbracket$ with constant term zero such that 
\[
\mathcal{I}(\delta_\mu^\zeta)=\Sym\left(\mathcal{DT}(\mathcal{A})_\mu^\zeta\right)
\]
where $\Sym(a)\coloneqq\sum_{n\in\mathbb{N}}\Sym^n(a)$. It follows from Lemma~6.3 of~\cite{Meinhardt} that such an element $\mathcal{DT}(\mathcal{A})_\mu^\zeta$ is well defined.

\subsection{The wall-crossing formula}

We can now formulate a version of the wall-crossing formula for motivic DT~invariants.

\begin{theorem}[\cite{DavisonMeinhardt1}, Proposition~6.23 and~\cite{Meinhardt}, Section~6.5]
\label{thm:motivicWCF}
Let $D\subset(0,1]$ be an interval. Then the product 
\[
\prod_{\mu\in D}\mathcal{I}(\delta_\mu^\zeta)\in\mathcal{S}(\mathcal{A})
\]
is constant as the stability condition $\zeta$ varies, provided there is no semistable object in~$\mathcal{A}$ whose slope enters or leaves the interval~$D$. Here the product is taken with respect to the multiplication $*$ in order of decreasing slopes.
\end{theorem}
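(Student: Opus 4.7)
The plan is to establish the identity first at the level of the Ringel--Hall algebra $\mathcal{H}(\mathcal{A})$ and then transport it to $\mathcal{S}(\mathcal{A})$ via the integration map $\mathcal{I}$. Since $\mathcal{I}$ is a ring homomorphism with respect to the $*$-product on $\mathcal{S}(\mathcal{A})$ (this is built into its construction, together with the choice of canonical stacky vanishing cycle), it suffices to prove that the product
\[
\delta_D^\zeta \;\coloneqq\; \prod_{\mu \in D} \delta_\mu^\zeta \;\in\; \mathcal{H}(\mathcal{A}),
\]
taken in order of decreasing slope, is independent of the stability condition $\zeta$ subject to the hypotheses of the theorem.

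First I would argue that $\delta_D^\zeta$ is represented by the substack of the stack of objects of $\mathcal{A}$ consisting of those $M$ all of whose Harder--Narasimhan factors, with respect to~$\zeta$, have slope in~$D$. The key input here is the existence and uniqueness of the Harder--Narasimhan filtration: every nonzero object of~$\mathcal{A}$ has a unique filtration whose associated graded pieces are $\zeta$-semistable with strictly decreasing slopes. Stratifying the stack of such objects by the ordered tuple of slopes appearing, and identifying each stratum as an iterated extension of semistable objects of fixed slope, gives the factorization above. This step is essentially a geometric incarnation of the fact that an HN filtration is unique, and it does not depend on any motivic finesse.

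Next I would show that the locus of objects appearing in $\delta_D^\zeta$ is \emph{invariant} under variation of $\zeta$ so long as no $\zeta$-semistable object has a slope crossing the boundary of~$D$. Here the point is purely combinatorial: as $\zeta$ moves along a path in the space of stability conditions, the HN filtration of an individual object $M$ can refine or coarsen at walls, but the property ``all HN factors have slope in $D$'' is unchanged as long as no wall causes a factor to jump into or out of~$D$. The hypothesis of the theorem is precisely the statement that no such wall is crossed. Applying~$\mathcal{I}$ and using that it is a homomorphism then yields the desired identity $\prod_{\mu \in D}\mathcal{I}(\delta_\mu^\zeta) = \mathcal{I}(\delta_D^\zeta)$, with the right-hand side manifestly constant in~$\zeta$.

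The main obstacle is proving that $\mathcal{I}$ is a ring homomorphism in the motivic (with potential) setting; this is the technical heart of the Davison--Meinhardt framework. It requires a Thom--Sebastiani theorem for the stacky vanishing cycle functor on the stack of short exact sequences, together with the integrality conjecture which ensures the $\mathbb{L}_R^{1/2}$ denominators behave correctly when one integrates over stacks with automorphisms. A secondary but important issue is to make sense of the possibly infinite product over $\mu\in D$: one must verify that the factors $\mathcal{I}(\delta_\mu^\zeta)$ lie in the completion $\mathcal{S}(\mathcal{A})$ in such a way that only finitely many contribute to each coefficient of $x_\gamma$, which follows from the fact that for each dimension vector~$\gamma$ only finitely many slopes~$\mu$ can occur as HN slopes of an object of class~$\gamma$.
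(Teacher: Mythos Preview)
Your proposal is correct and aligns with the approach the paper indicates. The paper does not give a self-contained proof of this theorem but cites \cite{DavisonMeinhardt1} and \cite{Meinhardt}, noting in the remarks following the statement that the argument proceeds exactly as you describe: one considers objects of~$\mathcal{A}$ whose Harder--Narasimhan factors have slopes in the interval~$D$, which yields the factorization $\prod_{\mu\in D}\delta_\mu^\zeta$ in the Hall algebra, and the well-definedness of the infinite product in~$\mathcal{S}(\mathcal{A})$ is handled by the observation that only finitely many factors contribute to each $\gamma$-component---precisely the points you make.
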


The infinite product appearing in Theorem~\ref{thm:motivicWCF} is well defined because for every $\gamma\in\mathbb{N}^{Q_0}$, only finitely many factors contribute to its $\gamma$-component in $\mathcal{S}(\mathcal{A})\cong\prod_{\gamma\in\mathbb{N}^{Q_0}}\mathfrak{R}(\Spec\mathbb{C})$.

Some remarks are in order as the statement of Theorem~\ref{thm:motivicWCF} differs slightly from the wall-crossing formula proved in~\cite{DavisonMeinhardt1} and reviewed in~\cite{Meinhardt}. First of all, these references consider arbitrary modules over the Jacobian algebra whereas we consider only nilpotent modules. In fact, the proof of the wall-crossing formula for nilpotent modules is essentially the same; one simply replaces all moduli stacks of quiver representations by the corresponding stacks of nilpotent representations.

Another difference between Theorem~\ref{thm:motivicWCF} and the wall-crossing formulas formulated in~\cite{DavisonMeinhardt1} and~\cite{Meinhardt} is that we take the product over slopes in an interval $D\subset(0,1]$ whereas \cite{DavisonMeinhardt1} and~\cite{Meinhardt} take the product over all slopes. Again, the idea of the proof is the same. In the proof of Theorem~\ref{thm:motivicWCF}, one considers objects of~$\mathcal{A}$ whose Harder-Narasimhan factors have slopes contained in the interval~$D$. (See~\cite{Meinhardt} for details on the Harder-Narasimhan filtration and the proof of the wall-crossing formula.)

\subsection{Quasi-classical limit}

As explained in~\cite{Meinhardt}, Example~5.7, there is a motivic theory $R$ that associates to a variety~$X$ the Grothendieck group of the category $D^b(\mathrm{MMHM}(X))$ of ``monodromic mixed Hodge modules'' on~$X$. We refer to~\cite{DavisonMeinhardt2} for a more detailed description of this category. We can upgrade this~$R$ to a stacky motivic theory~$\mathfrak{R}$ as above and view the wall-crossing formula as an identity in the group~$\mathcal{S}(\mathcal{A})=\mathfrak{R}(\mathbb{N}^{Q_0})$.

Let us write $\mathcal{T}(\mathcal{A})=\mathbb{Z}(\!(q^{1/2})\!)\llbracket x_i:i\in Q_0\rrbracket$. In Section~1.3 of~\cite{DavisonMeinhardt2}, the authors associate to each object $\mathcal{L}\in D^b(\mathrm{MMHM}(\mathbb{N}^{Q_0}))$, a formal power series 
\[
\mathcal{Z}(\mathcal{L})=\sum_{\gamma\in\mathbb{N}^{Q_0}}\chi(\mathcal{L}_\gamma,q^{1/2})\cdot x_\gamma \in \mathcal{T}(\mathcal{A})
\]
where for each $\gamma\in\mathbb{N}^{Q_0}$ we have $\chi(\mathcal{L}_\gamma,q^{1/2})\in\mathbb{Z}(\!(q^{1/2})\!)$ and $x_\gamma\coloneqq\prod_ix_i^{\gamma_i}$. This operation $\mathcal{Z}$ provides a map from a subgroup of~$\mathcal{S}(\mathcal{A})$ to~$\mathcal{T}(\mathcal{A})$. In the terminology of~\cite{DavisonMeinhardt2}, the category $\mathcal{A}=\mathcal{A}(Q,W)$ of nilpotent modules over the Jacobian algebra of the quiver with potential $(Q,W)$ is a Serre subcategory of the category of all finite-dimensional modules over the Jacobian algebra. The integrality theorem proved in~\cite{DavisonMeinhardt2} therefore implies that for a generic stability condition $\zeta$ on~$\mathcal{A}$, one has an identity 
\[
\mathcal{Z}(\mathcal{I}(\delta_\mu^\zeta))=\operatorname{EXP}\left(\sum_{0\neq\gamma\in\Lambda_\mu^\zeta} \frac{\Omega_\gamma^\zeta(q^{-1/2})}{q^{1/2}-q^{-1/2}}\cdot x_\gamma\right).
\]
Here $\Omega_\gamma^\zeta(q^{1/2})$ is a Laurent polynomial in~$q^{1/2}$ called the \emph{refined BPS invariant} for $\gamma$ and~$\zeta$, and $\operatorname{EXP}$ denotes the \emph{plethystic exponential}. The latter is a homomorphism from the additive group $\mathfrak{m}\subset\mathbb{Z}(q^{1/2})\llbracket x_i:i\in Q_0\rrbracket$ of series with constant term zero to the multiplicative group $1+\mathfrak{m}$ defined by the rule 
\[
\operatorname{EXP}\left(f(q^{1/2}\right)\cdot x_\gamma)=\exp\left(\sum_{n=1}^\infty\frac{1}{n}f(q^{n/2})\cdot x_{n\gamma}\right)
\]
for any $f(q^{1/2})\in\mathbb{Z}(q^{1/2})$ and $\gamma\in\mathbb{N}^{Q_0}$.

Just as we defined an auxiliary product $*$ on~$\mathcal{S}(\mathcal{A})$, we can define an auxiliary product, also denoted~$*$, on the ring $\mathcal{T}(\mathcal{A})$. It is the unique $\mathbb{Z}(\!(q^{1/2})\!)$-bilinear continuous product defined on basis elements by 
\[
x_\alpha*x_\beta=(-q^{1/2})^{\langle\alpha,\beta\rangle}\cdot x_{\alpha+\beta}.
\]
Then map $\mathcal{Z}$ preserves the $*$~products. Note that by skew-symmetry of the pairing $\langle-,-\rangle$, one has $x_\alpha*x_\beta=q^{\langle\alpha,\beta\rangle}\cdot x_\beta*x_\alpha$. There is an automorphism $\mathbf{S}_\mu^\zeta$ of the algebra $\mathcal{T}(\mathcal{A})$ with respect to the product~$*$ given by conjugation with $\mathcal{Z}(\mathcal{I}(\delta_\mu^\zeta))$:
\[
\mathbf{S}_\mu^\zeta(a)=\mathcal{Z}(\mathcal{I}(\delta_\mu^\zeta))*a*\mathcal{Z}(\mathcal{I}(\delta_\mu^\zeta))^{-1}.
\]
For a generic stability condition $\zeta$, it follows from the definition of the plethystic exponential and the commutation relation for~$*$ that this automorphism acts on basis elements by 
\[
\mathbf{S}_\mu^\zeta(x_\beta)=x_\beta*\operatorname{EXP}\left(-\sum_{0\neq\gamma\in\Lambda_\mu^\zeta}q^{-1/2}[\langle\beta,\gamma\rangle]_{q^{-1}}\,\Omega_\gamma^\zeta(q^{-1/2})\cdot x_\gamma\right)
\]
where we have introduced the \emph{quantum integer} $[n]_t\coloneqq\frac{t^n-1}{t-1}=1+t+\dots+t^{n-1}$. Using the definition, one can compute the plethystic exponential explicitly, and in the quasi-classical limit $q^{1/2}\rightarrow1$, one finds 
\[
\mathbf{S}_\mu^\zeta(x_\beta)=x_\beta*\prod_{0\neq\gamma\in\Lambda_\mu^\zeta}(1-x_\gamma)^{\Omega_\zeta(\gamma)\cdot\langle\beta,\gamma\rangle}
\]
where we have written $\Omega_\zeta(\gamma)=\Omega_\gamma^\zeta(1)$. Taking this quasi-classical limit in Theorem~\ref{thm:motivicWCF}, we obtain the following version of the wall-crossing formula.

\begin{theorem}
\label{thm:numericalWCF}
Let $D\subset(0,1]$ be an interval. Suppose $\zeta_t$, $t\in[0,1]$, is a path in the space of stability conditions with generic endpoints and there is no semistable object with slope in the boundary of $D$ for any $\zeta_t$. Then we have 
\[
\prod_{\mu\in D}\mathbf{S}_\mu^{\zeta_0}=\prod_{\mu\in D}\mathbf{S}_\mu^{\zeta_1}
\]
in $\Aut\mathbb{Z}\llbracket x_i:i\in Q_0\rrbracket$. Here the products are taken in order of decreasing slopes.
\end{theorem}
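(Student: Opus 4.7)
The strategy is to extract Theorem~\ref{thm:numericalWCF} from Theorem~\ref{thm:motivicWCF} by pushing the motivic identity through the series map $\mathcal{Z}$, reinterpreting the resulting series identity as an identity between conjugation automorphisms, and finally passing to the quasi-classical limit $q^{1/2} \to 1$.

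First I would apply $\mathcal{Z}$ to the conclusion of Theorem~\ref{thm:motivicWCF}. Since $\mathcal{Z}$ is a homomorphism with respect to the twisted products $*$, and since for each fixed $\gamma \in \mathbb{N}^{Q_0}$ only finitely many factors contribute to the coefficient of $x_\gamma$ (as noted immediately after Theorem~\ref{thm:motivicWCF}), the motivic identity descends to
\[
\prod_{\mu \in D} \mathcal{Z}(\mathcal{I}(\delta_\mu^{\zeta_0})) = \prod_{\mu \in D} \mathcal{Z}(\mathcal{I}(\delta_\mu^{\zeta_1}))
\]
in $(\mathcal{T}(\mathcal{A}), *)$, with both products taken in decreasing order of slope. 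Because $\zeta_0$ and $\zeta_1$ are generic, the integrality theorem of~\cite{DavisonMeinhardt2} allows me to rewrite each factor as the plethystic exponential recalled in the excerpt.

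Next I would translate this series identity into an identity of automorphisms. By construction $\mathbf{S}_\mu^\zeta$ is conjugation by $\mathcal{Z}(\mathcal{I}(\delta_\mu^\zeta))$ in the algebra $(\mathcal{T}(\mathcal{A}), *)$, and conjugation by an ordered $*$-product equals the corresponding composition of conjugations taken in the same order. Hence the series identity above is equivalent to
\[
\prod_{\mu \in D} \mathbf{S}_\mu^{\zeta_0} = \prod_{\mu \in D} \mathbf{S}_\mu^{\zeta_1}
\]
as automorphisms of $(\mathcal{T}(\mathcal{A}), *)$. Finally I would specialize $q^{1/2} \to 1$: the product $*$ degenerates to the sign-twisted multiplication $x_\alpha \cdot x_\beta = (-1)^{\langle \alpha, \beta \rangle} x_{\alpha+\beta}$ on $\mathbb{Z}\llbracket x_i : i \in Q_0 \rrbracket$, the plethystic exponential collapses to the product $\prod_\gamma (1 - x_\gamma)^{\Omega_\zeta(\gamma)\langle\beta,\gamma\rangle}$ computed just before the theorem, and the resulting identity is precisely the asserted equality in $\Aut \mathbb{Z}\llbracket x_i : i \in Q_0 \rrbracket$.

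The main technical obstacle I anticipate lies in the final step, namely verifying that the $q^{1/2} \to 1$ limit is well defined and compatible with the composition of the possibly infinite product of automorphisms. The refined invariants $\Omega_\gamma^\zeta(q^{1/2})$ are Laurent polynomials and so are regular at $q^{1/2} = 1$, which gives a classical limit for each factor $\mathbf{S}_\mu^\zeta$; compatibility with composition then amounts to a monomial-by-monomial check, which reduces to a finite identity for each $x_\gamma$ by the slope-sector finiteness already used, and can be settled in the formal-power-series topology on $\Aut \mathbb{Z}\llbracket x_i : i \in Q_0 \rrbracket$.
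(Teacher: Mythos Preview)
Your proposal is correct and follows essentially the same route as the paper: the paper derives Theorem~\ref{thm:numericalWCF} by applying the $*$-preserving map $\mathcal{Z}$ to the motivic identity of Theorem~\ref{thm:motivicWCF}, interpreting each side as a conjugation automorphism of $(\mathcal{T}(\mathcal{A}),*)$, and then passing to the quasi-classical limit $q^{1/2}\to 1$ using the explicit formula for $\mathbf{S}_\mu^\zeta$ obtained from the integrality theorem. Your identification of the finiteness-per-$x_\gamma$ argument as the mechanism making both the infinite product and the limit well defined matches the paper's implicit reasoning.
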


\subsection{Link with quadratic differentials}

As explained in~\cite{BridgelandSmith}, a tagged triangulation $\tau$ of a marked bordered surface $(\mathbb{S},\mathbb{M})$ determines an associated quiver with potential $(Q(\tau),W(\tau))$, and the domain $\mathcal{C}_\tau\subset\mathscr{Q}^\pm(\mathbb{S},\mathbb{M})$ in the moduli space of quadratic differentials embeds naturally into the space of stability conditions on $\mathcal{A}=\mathcal{A}(Q(\tau),W(\tau))$. If $\phi\in\mathcal{C}_\tau$ is a quadratic differential and $\zeta$ is the corresponding stability condition on~$\mathcal{A}$, then there is an isomorphism $\Gamma_\phi\cong\mathbb{Z}^{Q(\tau)_0}$, and the period map~$Z_\phi$ coincides with the central charge~$Z_\zeta$ under this isomorphism. In particular, if $\phi$ is generic then so is $\zeta$. By Theorem~1.4 of~\cite{BridgelandSmith}, the invariant $\Omega_\phi(\gamma)$ coincides with $\Omega_\zeta(\gamma)$ where we use the same notation for a class in $\Gamma_\phi$ and the corresponding element of $\mathbb{Z}^{Q(\tau)_0}$. If the boundary rays of a convex sector~$\Delta\subset\mathbb{C}^*$ are non-active for the differential~$\phi$ and $D\subset(0,1]$ is the set of phases of rays in~$\Delta$, then one can show as in Lemma~11.4 of~\cite{BridgelandSmith} that there is no semistable object for $\zeta$ with slope in the boundary of~$D$. Hence Theorem~\ref{thm:firstWCF} follows from Proposition~\ref{prop:KSautomorphism} and Theorem~\ref{thm:numericalWCF}.

\bibliographystyle{amsplain}

\end{document}